\documentclass{amsart}
\usepackage[utf8x]{inputenc}
\usepackage{amsmath}
\usepackage{amssymb}
\usepackage{psfrag}
\usepackage[all,cmtip]{xy}
\usepackage{amsmath,amscd}
\usepackage{amssymb}
\usepackage{graphicx}
\usepackage{wasysym}
\usepackage{textcomp}
\usepackage{enumerate}
\usepackage{graphicx} 
\DeclareGraphicsExtensions{.pdf,.eps,.fig}
\newcommand{\Id}{\operatorname{Id}}

\newtheorem{definition}{Definition}[section]
\newtheorem{proposition}[definition]{Proposition}
\newtheorem{lemma}[definition]{Lemma}
\newtheorem{theorem}[definition]{Theorem}
\newtheorem{remark}[definition]{Remark}
\newtheorem{example}[definition]{Example}
\newtheorem{corollary}[definition]{Corollary}
\newtheorem{conjecture}[definition]{Conjecture}


\title{Relational symplectic groupoids
}

\author{Alberto S. Cattaneo and Ivan Contreras}

\address{Institut f\"ur Mathematik, Universit\"at Z\"urich Irchel, Winterthurerstrasse 190, CH-8057 Z\"urich, Switzerland}
\address{Department of Mathematics, University of California, Berkeley, California 94305, USA}
\thanks{A.S.C is partially supported by SNF Grant 20-149150. I.C. is supported by SNF Grant PBZHP2-147294}

\email{alberto.cattaneo@math.uzh.ch, ivan.contreras@berkeley.edu}
\begin{document}

\begin{abstract}
This note introduces the  construction of relational symplectic group\-oids as a way to integrate every Poisson manifold. 
Examples are provided and the equivalence, in the integrable case, with the usual notion of symplectic groupoid is discussed.
 
\end{abstract}
\maketitle

\tableofcontents
\section{Introduction}
Symplectic groupoids \cite{Weinstein} are Lie groupoids with a compatible symplectic structure. Their space of objects is naturally endowed with a Poisson structure. In a sense, a symplectic groupoid is a good symplectic replacement for the base Poisson manifold and is also related to its quantization.

This beautiful picture has one fault: namely, not every Poisson manifold arises as the space of objects of a symplectic groupoid. Moreover, the symplectic category is not the correct classical analogue of the category of Hilbert spaces which appears in quantum mechanics.

The goal of this paper is to define a more general structure, which we call a relational symplectic groupoid and of which ordinary symplectic groupoids are a particular case, as a ``groupoid object in the extended symplectic category." Quotation marks are needed as this as to be interpreted in the correct way. 

First, the ``extended symplectic category," whose objects are symplectic manifolds and whose morphisms are canonical relations (i.e., immersed Lagrangian submanifolds in the Cartesian product of symplectic manifolds with appropriate sign conventions for the symplectic form) is not a category since the composition of a canonical relation is not a submanifold in general. This is however not a problem since in the case at hand we are only interested in morphisms that compose well. (The situation is actually even subtler since, in order to have particularly interesting examples at hand, we include also infinite dimensional weak symplectic manifolds: the composition of canonical relations may then in general also fail to be Lagrangian.)

Second, a ``groupoid object" is roughly speaking obtained by replacing maps in the definition of a groupoid by canonical relations. Notice that at this level we only want to use diagrams involving the space of morphisms of the groupoid but not the space of objects, which already in the case of an ordinary symplectic groupoid is not a symplectic manifold but only Poisson. On the other hand, in order to have an interesting theory we have to introduce some extra axioms, which are automatically satisfied in the case of an ordinary symplectic groupoid (and which also have a natural interpretation in terms of a two-dimensional topological field theory).

Under some extra regularity conditions---we then speak of a regular relational groupoid---we are able to show that an appropriately defined ``space of objects" naturally carries a Poisson structure. Moreover, we show that every Poisson manifold arises as the
``space of objects" of a regular relational symplectic groupoid (even though for the classically nonintegrable Poisson manifolds we have to allow for infinite-dimensional relational symplectic groupoids). This integration of every Poisson manifold arises from the path space construction stemming from the Poisson sigma model \cite{Cat}.

Finally, there is a natural notion of morphisms (as structure compatible canonical relations) and equivalences (as morphisms whose transpose is also a morphism) between relational symplectic groupoids. We show that in the case of a classically integrable Poisson manifold the relational symplectic groupoid arising from the path space construction is canonically equivalent to any ordinary symplectic groupoid integrating it.

As a final remark notice \cite{Relational} that
the axioms for a relational symplectic groupoid make sense also in other categories, e.g., in the category of Hilbert spaces. This provides a definition of what the quantization of a relational symplectic groupoid should look like. We plan to return to this problem.
{}From this point of view, the relational symplectic groupoid approach is more natural than the stacky groupoid approach of \cite{TZ} even though, in
the nonintegrable case, one has to allow for infinite dimensional manifolds. Moreover, the flexibility we gain by the notion of equivalence might be useful for finding a better candidate for quantization than the ordinary symplectic groupoid as in \cite{Hawkins}.

\subsubsection*{Acknowledgement}
We thank F. Wagemann, A. Weinstein and M. Zambon for useful discussions and remarks.
A.S.C. thanks the University of California at Berkeley for hospitality.
\section{Relational symplectic groupoids}
Relational symplectic groupoids are objects in an extension of the usual symplectic category in which the objects are symplectic manifolds and the morphisms are symplectomorphisms.
This extension, which we will denote by $\mbox{\bf{Symp}}^{Ext}$, is not exactly a category, since the composition of morphisms is only partially defined; it corresponds to what in the 
literature is called a \emph{categoroid} \cite{Kashaev}.
In this section we will define such an extension and describe the 
relational symplectic groupoid in terms of an object and special morphisms in $\mbox{\bf{Symp}}^{Ext}$.
 
\subsection{The categoroid $\mbox{\bf{Symp}}^{Ext}$ }
In order to describe $\mbox{\bf{Symp}}^{Ext}$, we first need to include the case of infinite dimensional manifolds equipped with symplectic structures
\footnote{In this paper we restrict ourselves to the case of Banach manifolds. The construction in the Fr\'echet setting is treated in detail in \cite{thesis}.}.
\begin{definition}\label{Extended}\emph{
$\mbox{\textbf{Symp}}^{Ext}$ is a categoroid\footnote{ As we mentioned before, this is not an honest category since the composition of immersed canonical relations is not in general a smooth immersed submanifold. }  
in which the objects are weak symplectic manifolds, that is Banach manifolds equipped with a closed $2$-form $\omega$ such that the induced map 
\[\omega^{\sharp}: TM\to T^*M\]
is injective.
A morphism between two weak symplectic manifolds $(M,\omega_M)$ and $(N,\omega_N)$ is a pair $(L, \phi)$, where 
\begin{enumerate}[1.]
 \item $L$ is a smooth manifold.
\item $\phi\colon L \to M \times N$ is an immersion.
\footnote{Observe here that usually one considers embedded Lagrangian submanifolds, but we consider immersed ones.}
\item $T\phi_x$ applied to $T_xL$ is a Lagrangian subspace of $T_{\phi(x)}(\overline M\times N),\, \forall x \in L$.
\end{enumerate}
We will call these morphisms \textit{immersed canonical relations} and denote them by $L\colon M \nrightarrow N$. 
The partial composition of morphisms is given by composition of relations as sets.}
\end{definition}

\begin{remark}
\emph{
Observe that $\mbox{\textbf{Symp}}^{Ext}$ carries an involution $\dagger\colon(\mbox{\textbf{Symp}}^{Ext})^{op} \to  \mbox{\textbf{Symp}}^{Ext} $ that is the identity in objects and is the relational 
converse in morphisms, i.e. for $f\colon A \nrightarrow B$, $f^{\dagger}:= \{(b,a) \in B\times A \vert (a,b) \in f \}$.}
\end{remark}

\begin{remark}
\emph{This categoroid extends the usual symplectic category in the sense that the symplectomorphisms can be thought in terms of immersed canonical relations, namely, if $\phi\colon (M,\omega_M) \to (N, \omega_N))$ is a symplectomorphism between two weak symplectic manifolds, then $(\mbox{graph}(\phi), \iota)$, where $\iota$ is the inclusion of $\mbox{graph}(\phi)$ in $M \times N$, is a morphism in $\mbox{\textbf{Symp}}^{Ext}$.} 
\end{remark}

\subsection{Some special canonical relations}
We will describe in this subsection some particular canonical relations that will appear naturally in the construction of relational symplectic groupoid and in its connection to usual symplectic groupoids.\\

Following \cite{Alan}, consider a coisotropic subspace $W \subset V$. It follows that $W \oplus V$ is a coisotropic subspace of 
$\overline V \oplus V$ (since $(W \oplus V)^{\perp}= W^{\perp} \oplus \{0\}$). 
Since $\triangle_V \subset \overline V \oplus V$ is a Lagrangian subspace, where $\triangle _V$ denotes the diagonal of $V$ in $V\oplus V$, it follows that $P_{W \oplus V}(\triangle_V)$ is a Lagrangian subspace of
 $\underline{\overline W \oplus V}= \overline{\underline{W}}\oplus V$, where $P_{W \oplus V}$ denotes the quotient map $P_{W \oplus V}: \overline{V}\oplus V \to \underline{W \oplus V}$. This projection will be denoted by $I$ and is a canonically defined canonical relation $I\colon \underline W \nrightarrow V$. 
In fact, this also holds in the infinite dimensional setting due to the following
\begin{proposition} \label{projection}
\emph{For any (possibly infinite dimensional) symplectic space $V$, $I$ is a canonical relation.}
\end{proposition}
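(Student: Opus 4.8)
The plan is to unwind the definition of $I$, confirm that each ingredient of the symplectic reduction still makes sense when $V$ is an infinite-dimensional (Banach) weak symplectic space, and then verify the Lagrangian condition $I=I^{\perp}$ by a direct argument that uses only the inclusion $W^{\perp}\subset W$ and never a dimension count. A dimension count is exactly what is unavailable in the infinite-dimensional setting, so the content of the proposition is really that, for this particular reduction, one does not need it.

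First I would fix notation. Let $W\subset V$ be the coisotropic subspace in question, which I take to be closed (as it is in all cases of interest, e.g.\ the tangent space of a coisotropic submanifold). Since $\omega^{\sharp}\colon V\to V^{*}$ is bounded, for each $w\in W$ the functional $v\mapsto\omega(v,w)$ is continuous, so $W^{\perp}=\bigcap_{w\in W}\ker\bigl(\omega(\cdot,w)\bigr)$ is closed and $\underline W:=W/W^{\perp}$ is a Banach space; coisotropy gives $W^{\perp}\subset W$, the reduced form $\omega_{\underline W}([v_{1}],[v_{2}]):=\omega(v_{1},v_{2})$ is well defined and weakly nondegenerate by the very definition of $W^{\perp}$, so $(\underline W,\omega_{\underline W})$ is again a weak symplectic Banach space. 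Using $(W\oplus V)^{\perp}=W^{\perp}\oplus\{0\}$, which lies entirely in the first summand, one gets $\triangle_{V}\cap(W\oplus V)=\triangle_{W}$ and hence the explicit description
\[
I=P_{W\oplus V}(\triangle_{V})=\{\,([v],v):v\in W\,\}\subset\overline{\underline W}\oplus V .
\]

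Next, $I$ is a closed subspace: the linear map $W\to\overline{\underline W}\oplus V$, $v\mapsto([v],v)$, is a continuous bijection onto $I$ whose inverse is the restriction to $I$ of the bounded projection onto the second summand, so $I$ is linearly homeomorphic to the Banach space $W$, hence complete and therefore closed. Isotropy is immediate: for $([v_{1}],v_{1}),([v_{2}],v_{2})\in I$ the form of $\overline{\underline W}\oplus V$ evaluates to $-\omega_{\underline W}([v_{1}],[v_{2}])+\omega(v_{1},v_{2})=-\omega(v_{1},v_{2})+\omega(v_{1},v_{2})=0$, so $I\subset I^{\perp}$. For the reverse inclusion, suppose $([a],b)\in\overline{\underline W}\oplus V$, with representative $a\in W$, is orthogonal in $\overline{\underline W}\oplus V$ to every $([v],v)$, $v\in W$; then $\omega(b-a,v)=0$ for all $v\in W$, i.e.\ $b-a\in W^{\perp}$, and by coisotropy $b-a\in W$, so $b\in W$ and $[b]=[a]$ in $\underline W$, whence $([a],b)=([b],b)\in I$. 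Therefore $I=I^{\perp}$, and $I$ is a closed Lagrangian subspace of $\overline{\underline W}\oplus V$, i.e.\ a canonical relation $\underline W\nrightarrow V$.

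The main obstacle I anticipate is not this algebra --- which is dimension-blind --- but the functional-analytic bookkeeping of the second paragraph: one must check that $W^{\perp}$ is closed (so that $\underline W$ is Banach), that the reduced form remains weakly nondegenerate, and that $I$ is closed. These are precisely the points where finite-dimensionality was used tacitly in the classical statement, and each of them here follows from boundedness of $\omega^{\sharp}$ together with the closedness of $W$.
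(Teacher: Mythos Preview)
Your proof is correct and follows essentially the same approach as the paper: both write $I=\{([w],w):w\in W\}$ explicitly and verify $I=I^{\perp}$ by showing that $([a],b)\perp I$ forces $b-a\in W^{\perp}\subset W$, hence $b\in W$ and $[b]=[a]$. The functional-analytic bookkeeping you supply (closedness of $W^{\perp}$ and of $I$, Banach structure on $\underline W$) is additional care the paper itself omits, treating the statement as a purely linear-algebraic identity $I=I^{\perp}$.
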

\begin{proof}
Explicitely we have that
\[I=\{([w],w) \in (W \diagup W^{\perp})\times V \vert w \in W \},\]
therefore
\[I^{\perp}=\{([v],z)\vert [v] \in W \diagup W^{\perp}, z\in V, \underline{\omega}([v],[w])-\omega(z,w)=0, \forall w \in W\}.\] 
By linearity  and the construction of the reduction this is equivalent to
\begin{eqnarray*}
I^{\perp}&=&\{([v],z)\vert [v] \in W \diagup W^{\perp}, z\in V, \omega(v-z,w)=0, \forall w \in W, v \in [v]\}\\
&=& \{([v],z)\vert [v] \in W \diagup W^{\perp}, \, z \in V, v-z \in W^{\perp}, \forall v \in [v]\}.
\end{eqnarray*}
This implies in particular that $z$ and $v$ belong to the same equivalence class and since $v \in W$ and $W$ is 
coisotropic it follows that $z \in W$ and therefore $[v]=[z]$, hence $I^{\perp}=I$, as we wanted.
\end{proof}

 We denote by $P:= I^{\dagger}: 
V \nrightarrow \underline{W}$, the transpose of $I$. We can then prove  the following
\begin{proposition}
 \emph{The following relations hold 
\begin{enumerate}[1.]
 \item $P\circ I\colon \underline{W}\nrightarrow \underline W = \mbox{Graph(Id)}.$
 \item $I\circ P\colon V \nrightarrow V= \{(w,w^{'})\in V \times V \vert  w, w^{'} \in W;\, [ w]= [ w^{'}]\}.$ Furthermore, $I\circ P \subset W \times W$ is
 an equivalence relation on $W$ and \[W \diagup I\circ P= \underline W.\]
\end{enumerate}
}
\end{proposition}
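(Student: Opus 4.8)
The plan is to do everything at the set-theoretic level. By Proposition~\ref{projection} the maps $I$ and $P=I^{\dagger}$ are canonical relations, and in the linear category the ordinary composition of Lagrangian relations is automatically again a Lagrangian relation, so no transversality subtlety arises and the whole content is an unwinding of definitions. I will use the explicit descriptions
\[
I=\bigl\{\,([w],w)\mid w\in W\,\bigr\}\subset (W/W^{\perp})\times V,\qquad
P=I^{\dagger}=\bigl\{\,(w,[w])\mid w\in W\,\bigr\}\subset V\times (W/W^{\perp}),
\]
together with the composition rule $S\circ R=\{(a,c)\mid \exists\, b,\ (a,b)\in R,\ (b,c)\in S\}$ for $R\colon A\nrightarrow B$, $S\colon B\nrightarrow C$.

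For item~1 I would write
\[
P\circ I=\bigl\{\,([w_{1}],[w_{2}])\mid \exists\, v\in V,\ ([w_{1}],v)\in I,\ (v,[w_{2}])\in P\,\bigr\}.
\]
The first membership forces $v\in W$ and $[v]=[w_{1}]$, the second forces $v\in W$ and $[v]=[w_{2}]$; conversely, if $[w_{1}]=[w_{2}]$ then $v:=w_{1}\in W$ witnesses the pair. Hence $P\circ I=\{([w_{1}],[w_{2}])\mid [w_{1}]=[w_{2}]\}=\mathrm{Graph}(\mathrm{Id}_{\underline{W}})$, the identity morphism on $\underline{W}$ in $\mathbf{Symp}^{Ext}$.

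For item~2 the same bookkeeping gives
\[
I\circ P=\bigl\{\,(w,w')\mid \exists\, [u]\in\underline{W},\ (w,[u])\in P,\ ([u],w')\in I\,\bigr\},
\]
which, unwinding $P$ and $I$, reduces to $\{(w,w')\mid w,w'\in W,\ [w]=[w']\}$; in particular $I\circ P\subset W\times W$. Reflexivity, symmetry and transitivity are then immediate, since $I\circ P$ is precisely the pullback along the quotient map $W\to W/W^{\perp}$ of the equality relation on $W/W^{\perp}$; concretely $w\sim w'$ iff $w-w'\in W^{\perp}$. Consequently the set of equivalence classes $W/(I\circ P)$ is, by construction, $W/W^{\perp}=\underline{W}$.

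I do not expect a genuine obstacle here: once $I$ and $P$ are written out the statement falls out directly. The only points that need a word of care are (i) keeping the two copies of $V$, resp.\ of $\underline{W}$, and the bar/no-bar conventions straight while reading off the composite; and (ii) noting that the composite is again a canonical relation, which is automatic in the linear setting and, for item~1, visible anyway since $\mathrm{Graph}(\mathrm{Id})$ is manifestly Lagrangian in $\overline{\underline{W}}\times\underline{W}$. As a consistency check, item~1 forces $I\circ P$ to be idempotent, $(I\circ P)\circ(I\circ P)=I\circ(P\circ I)\circ P=I\circ P$, in agreement with its being an equivalence relation.
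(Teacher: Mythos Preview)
Your proof is correct and is precisely the ``direct computation'' that the paper leaves to the reader; the approach is the same, only spelled out in full.
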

\begin{proof}
Direct computation. 
\end{proof}
The following lemma (from Proposition A.1 in Appendix A of \cite{AlbertoPavel2}) will be important for the rest of the paper and it relates Lagrangian subspaces before and after symplectic reduction.
\begin{lemma}\label{Coisotropic}
\emph{Let $(V,\omega)$ be a symplectic space. Let $C$ be a coisotropic subspace of V. Let $L$ be a subspace such that 
\[C^{\perp} \subset L\subset C \subset V. \]
Assume that $\underline{L}:= L / C^{\perp}$ is Lagrangian in $\underline{C}:= C / C^{\perp}.$ Then, $L$ is a Lagrangian subspace of $V$.}
\end{lemma}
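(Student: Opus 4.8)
The plan is to verify directly that $L$ equals its symplectic orthogonal $L^{\perp}$ in $(V,\omega)$, by proving the two inclusions $L\subseteq L^{\perp}$ and $L^{\perp}\subseteq L$ separately and using the reduction map $C\to\underline C$, $c\mapsto[c]$, to move information back and forth across the quotient. In finite dimensions one could alternatively combine isotropy of $L$ with the dimension count $\dim L=\dim\underline L+\dim C^{\perp}=\tfrac12\dim\underline C+\dim C^{\perp}=\tfrac12(\dim C+\dim C^{\perp})=\tfrac12\dim V$ and conclude that a half-dimensional isotropic subspace is Lagrangian; but the inclusion argument is the one that also survives in the weak symplectic setting, so that is the route I would take.

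For $L\subseteq L^{\perp}$ I would use that, since $C^{\perp}\subseteq L\subseteq C$, the reduced form on $\underline C$ is characterised by $\underline\omega([x],[y])=\omega(x,y)$ for $x,y\in C$. Applying this to $x,y\in L$ and invoking that $\underline L=L/C^{\perp}$ is isotropic in $\underline C$ gives $\omega(x,y)=\underline\omega([x],[y])=0$, so $L$ is isotropic. For $L^{\perp}\subseteq L$, take $z\in L^{\perp}$: because $C^{\perp}\subseteq L$ we have $z\in(C^{\perp})^{\perp}=C$, so the class $[z]\in\underline C$ is defined, and for every $x\in L$ one computes $\underline\omega([z],[x])=\omega(z,x)=0$; thus $[z]$ lies in the $\underline\omega$-orthogonal of $\underline L$ in $\underline C$, which by the assumed Lagrangianity (hence coisotropy) of $\underline L$ is contained in $\underline L$. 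Hence $[z]\in\underline L=L/C^{\perp}$, so $z\in L+C^{\perp}=L$, again using $C^{\perp}\subseteq L$. Combining the two inclusions yields $L=L^{\perp}$, i.e.\ $L$ is Lagrangian in $V$.

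The single genuinely delicate point is the reflexivity identity $(C^{\perp})^{\perp}=C$ used in the second step: it is automatic in finite dimensions, and for a weak symplectic (e.g.\ Banach) $V$ it holds for the coisotropic subspaces that actually occur in our applications (cf.\ \cite{AlbertoPavel2}), so it may either be taken as a standing hypothesis on $C$ or checked in each concrete case. Everything else is routine bookkeeping with the quotient $C\to\underline C$ and the characterisation of the reduced form.
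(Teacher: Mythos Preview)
Your proof is correct and follows essentially the same route as the paper: both arguments pass through the projection $p\colon C\to\underline C$ and the identity $\underline\omega([x],[y])=\omega(x,y)$ to transfer Lagrangianity of $\underline L$ back to $L=L^{\perp}$. The paper packages this as $p^{-1}(\underline L)=L$ and $p^{-1}(\underline L^{\perp})=L^{\perp}$, while you unwind the same computation into the two inclusions $L\subseteq L^{\perp}$ and $L^{\perp}\subseteq L$; your version is in fact slightly more explicit in isolating the reflexivity hypothesis $(C^{\perp})^{\perp}=C$, which the paper uses tacitly.
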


\begin{proof}
 Let $p\colon C \to \underline{C}$ denote the projection map to the reduced space.
The idea is to prove that $L=L^{\perp}$ using the fact that $p^{-1}(\underline{L})=p^{-1}(\underline{L}^{\perp})$. First, we prove that 
$p^{-1}(\underline{L}^{\perp})=L^{\perp}$. Let $v \in p^{-1}(\underline{L}^{\perp})$. We have that, by definition,
\[\underline{\omega}(p(v),\underline{w})=0, \forall \underline{w} \in \underline{L} \]
and this implies that
\[ \omega(v,w)=0, \forall w\in V \vert \underline{w} \in \underline{L},\]
therefore,
\[\omega(v,w)=0, \forall w \in L.\]

Also by definition we have that 
\[p^{-1}(\underline{L})=L,\]
therefore, using the fact that $\underline{L}$ is Lagrangian, $\underline{L}^{\perp}=\underline{L}$, 
we obtain that $L=L^{\perp},$ as we wanted.
\end{proof}

As an application of Proposition \ref{projection}  and Lemma \ref{Coisotropic} we have the following 

\begin{definition}
 \emph{Let $\underline L\colon \underline W \nrightarrow \underline W$ be a canonical relation. By Proposition \ref{Coisotropic},
 \[l(\underline L):= I \circ \underline L \circ P\colon V \nrightarrow V\] 
is an isotropic relation.  Moreover, since $(0,0) \in L$, we check that 
\[ W^{\perp} \oplus W^{\perp} \subset l(\underline L) \subset C\oplus C \subset \overline{V}\oplus V,\]
in addition if $p: \overline V \oplus V \to \overline W \oplus W$ denotes the symplectic reduction, then $p(l(\underline L)= L)$
therefore, we can apply Lemma \ref{Coisotropic} and we can conclude that $l(\underline L)$ is Lagrangian. The canonical relation $l(\underline L) $is also called the \emph{canonical lift} of $\underline L$. 
}
\end{definition}
\begin{definition}\label{canonicalproj}
\emph{Let $L\colon V \nrightarrow V$ be a canonical relation. Then 
\[p(L):= P \circ L \circ I\colon \underline W \nrightarrow \underline W\]
is Lagrangian, due to the fact that the symplectic projection of Lagrangian subspaces is Lagrangian (for the proof see e.g. Lemma 5.2 in \cite{Alan}). This canonical relation is also called the \emph{canonical projection} of $L$}.
\end{definition}
These two particular relations have interesting properties. Observe first that the composition $p\circ l$ is the identity for Lagrangian subspaces of 
the symplectic reduction $\underline W$, however, the composition $l \circ p$ is not the identity.

\subsection{The main construction}
This section contains the general description of relational symplectic groupoids, defined as special objects in $\mbox{\textbf{Symp}}^{Ext}$. It is a way to model the space of boundary fields before reduction of the PSM and to define a more general version of integration of Poisson manifolds. We give the main definitions, we discuss the connection with Poisson structures and we give some natural examples. For the motivational example of $T^*PM$ we prove that in fact we obtain relational symplectic groupoids for any Poisson manifold $M$ and we explain geometrically the integrability conditions for $T^*M$ in terms of the immersed canonical relations defining the relational symplectic groupoid.
\begin{definition}\emph{
A \textbf{relational symplectic groupoid}  (short RSG) is a triple $(\mathcal G,\, L,\, I)$ where 
\begin{enumerate}[1.]
 \item $\mathcal G$ is a weak symplectic manifold with a weak symplectic form $\omega$.
\item $L$ is an immersed Lagrangian submanifold of $\mathcal G ^3.$
 \item $I$ is an antisymplectomorphism of $\mathcal G$ called the \emph{inversion},
\end{enumerate}
satisfying the following six axioms A.1-A.6\footnote{In the case that $\mathcal G$ is finite dimensional, the Lagrangian conditions in axioms A.4, A.5 and A.6 are automatically satisfied.}: 
(A graphic interpretation of these axioms is given at the end of the section)}
\end{definition}

\begin{itemize}
 \item \textbf{\underline{A.1}} $L$ is cyclically symmetric, i.e. if $(x,y,z) \in L$, then $(y,z,x) \in L$.

\item \textbf{\underline{A.2}} $I$ is an involution (i.e. $I^2=\Id$).\\

\textbf{\underline{Notation}} $L$ is an immersed canonical relation $\mathcal G \times \mathcal G \nrightarrow \bar{\mathcal G}$ and will 
be denoted by $L_{rel}.$ Since the graph of $I$ is a Lagrangian submanifold of $\mathcal G \times \mathcal G$, $I$ is an immersed canonical 
relation $\bar {\mathcal G} \nrightarrow \mathcal G$ and will be denoted by $I_{rel}$.\\
$L$ and $I$ can be regarded as well as immersed canonical relations 
\[ \bar {\mathcal G} \times \bar{\mathcal G} \nrightarrow \mathcal G \mbox{ and } \mathcal G \nrightarrow \bar{\mathcal G}\]
respectively, which will be denoted by $\overline{L_{rel}}$ and $\overline{I_{rel}}.$ The transposition 
\begin{eqnarray*} 
T\colon  \mathcal G \times \mathcal G &\to& \mathcal G \times \mathcal G\\
(x,y) &\mapsto& (y,x)
\end{eqnarray*}
induces canonical relations
\[T_{rel}\colon \mathcal G \times \mathcal G \nrightarrow \mathcal G \times \mathcal G \mbox{ and } \overline{T_{rel}}: 
\bar {\mathcal G}\times \bar{\mathcal G} \nrightarrow \bar {\mathcal G}\times \bar{\mathcal G}.\]
The identity map $Id\colon  \mathcal G \to \mathcal G$ as a relation will be denoted by $\Id_{rel}\colon  \mathcal G \nrightarrow \mathcal G$ and by $\overline{\mbox{Id}_{rel}}\colon  \overline{\mathcal G} \nrightarrow \overline{\mathcal G}. $

Since $I$ and $T$ are diffeomorphisms, it follows that $I_{rel} \circ L_{rel}$ and $\overline{L}_{rel} \circ \overline{T}_{rel}\circ (\overline{I_{rel}} \times \overline{I_{rel}})
$ are immersed submanifolds.  For a relational symplectic groupoid we want that these two compositions to be morphisms $\mathcal G \times \mathcal G \nrightarrow \mathcal G$, and moreover we want them to coincide.

\item \textbf{\underline{A.3}} 
The compositions $I_{rel} \circ L_{rel}$ and $\overline{L}_{rel} \circ \overline{T}_{rel}\circ (\overline{I_{rel}} \times \overline{I_{rel}})$ are immersed Lagrangian submanifolds of $\mathcal G^3$ 
and
\begin{equation*}
I_{rel} \circ L_{rel}= \overline{L}_{rel} \circ \overline{T}_{rel}\circ (\overline{I_{rel}} \times \overline{I_{rel}}).
\end{equation*}


Now, define \[L_3:= I_{rel} \circ L_{rel}\colon  \mathcal G \times \mathcal G \nrightarrow \mathcal G.\] As a corollary of the previous axioms we get that
\begin{corollary}
$\overline{I_{rel}}\circ L_3 = \overline{L_3} \circ \overline{T_{rel}}\circ(\overline{I_{rel}}\times \overline{I_{rel}}).$ 
\end{corollary}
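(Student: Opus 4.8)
The plan is to obtain the identity formally from axiom A.3, using only two structural features of $\mbox{\textbf{Symp}}^{Ext}$: composition is composition of relations as sets (so it is associative whenever the intermediate results are smooth), and passing to the conjugate symplectic manifold does not change underlying sets, hence commutes with both composition and Cartesian products.

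First I would record the two ingredients. Since $L_3:=I_{rel}\circ L_{rel}$, conjugating gives
\[
\overline{L_3}=\overline{I_{rel}}\circ\overline{L_{rel}}
\]
as morphisms $\bar{\mathcal G}\times\bar{\mathcal G}\nrightarrow\bar{\mathcal G}$, because conjugation is compatible with composition. And axiom A.3 is exactly the statement
\[
\overline{L_{rel}}\circ\overline{T_{rel}}\circ(\overline{I_{rel}}\times\overline{I_{rel}})=I_{rel}\circ L_{rel}=L_3 .
\]
Granting these, the proof is a one-line computation: insert the first identity into the left-hand side of the claim, regroup by associativity, and apply the second,
\[
\overline{L_3}\circ\overline{T_{rel}}\circ(\overline{I_{rel}}\times\overline{I_{rel}})
=\overline{I_{rel}}\circ\bigl(\overline{L_{rel}}\circ\overline{T_{rel}}\circ(\overline{I_{rel}}\times\overline{I_{rel}})\bigr)
=\overline{I_{rel}}\circ L_3 .
\]
Every composition occurring here is clean, since $I$ and $T$ are diffeomorphisms (the remark already used just before axiom A.3), so each term is an immersed Lagrangian submanifold of $\mathcal G^{3}$, the regrouping is legitimate at the level of morphisms, and the set-level equality is automatically an equality in $\mbox{\textbf{Symp}}^{Ext}$.

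I do not expect a genuine obstacle: the only delicate point is bookkeeping---keeping track of which factors carry bars and of the order in which relations compose---together with the observation that cleanness of every composition is inherited from axiom A.3 and from $I$ and $T$ being diffeomorphisms. As a consistency check one may unwind both sides to subsets of $\mathcal G^{3}$: using $I^{2}=\Id$ one gets $\overline{I_{rel}}\circ L_3=\overline{I_{rel}}\circ I_{rel}\circ L_{rel}=L_{rel}$, while axiom A.3 read set-theoretically says precisely $(x,y,z)\in L\iff(I(y),I(x),I(z))\in L$, so the right-hand side equals $L_{rel}$ as well.
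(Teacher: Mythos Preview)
Your proof is correct and essentially identical to the paper's: both expand $\overline{L_3}=\overline{I_{rel}}\circ\overline{L_{rel}}$ and then invoke axiom A.3 to collapse the bracketed expression. The only cosmetic difference is that the paper simplifies each side separately to the common form $\overline{\Id_{rel}}\circ L_{rel}$ (using A.2 on the left), whereas you pass directly from the right-hand side to the left---your consistency check at the end is exactly the paper's version.
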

\begin{proof}By Axiom A.2 and by definition of $L_3$, the left hand side of the equation, as a relation from $\mathcal G \times \mathcal G$ to $\overline {\mathcal G}$ 
can be rewritten as 
\begin{equation}\label{A}
\overline{I_{rel}}\circ L_3= \overline{Id_{rel}}\circ L_{rel}.
\end{equation}
In the right hand side, by axiom A.3, we can rewrite
\begin{eqnarray}
\overline{L_3} \circ \overline{T_{rel}}\circ(\overline{I_{rel}}\times \overline{I_{rel}})&=&
\overline{I_{rel}}\circ (\overline{L_{rel}}\circ \overline{T_{rel}}\circ (\overline{I_{rel}}\times \overline{I_{rel}}))\\
&=&\overline{I_{rel}}\circ I_{rel} \circ L_{rel}\\
&=& \overline{Id_{rel}} \circ L_{rel}.\label{B} 
\end{eqnarray}
Comparing (\ref{A}) and (\ref{B}) we obtain the desired result.
\end{proof}

\item \textbf{\underline{A.4}} 
\begin{enumerate}[1.]
\item The compositions $L_3 \circ (L_3 \times \Id)$ and $L_3\circ (\Id \times L_3)$ are immersed submanifolds of $\mathcal G^4$.
\item  $L_3 \circ (L_3 \times \Id)$ and $L_3\circ (\Id \times L_3)$ are Lagrangian submanifolds of $\overline{\mathcal G^3}\times \mathcal G$.
\item The following equality holds \begin{equation}\label{asso} 
L_3 \circ (L_3 \times \Id)= L_3\circ (\Id \times L_3)\end{equation}
\end{enumerate}

The graph of the map $I$, as a relation $* \nrightarrow \mathcal G \times \mathcal G$ will be denoted by $L_I$.\\
\item \textbf{\underline{A.5}} 
\begin{enumerate}[1.]
\item The compositions
$L_3 \circ L_I$ and $L_3\circ(L_3\circ L_I\times L_3\circ L_I)$ are immersed submanifolds of $\mathcal G$. 
\item $L_3 \circ L_I$ and $L_3\circ(L_3\circ L_I\times L_3\circ L_I)$ are Lagrangian submanifolds of $\mathcal G$.
\item Denoting by $L_1$ the morphism $L_1:= L_3\circ L_I \colon * \nrightarrow \mathcal G$, then
\begin{equation} \label{unit}
 L_3\circ(L_1 \times L_1)= L_1.
\end{equation}
 \end{enumerate}

 From the definitions above we get the following
\begin{corollary}\emph{
\[\overline{I_{rel}}\circ L_1= \overline{L_1},\]
that is also equivalent to 
\[I(L_1)= \overline{L_1},\]
where $L_1$ is regarded as an immersed Lagrangian submanifold of $\mathcal G$.}
\end{corollary}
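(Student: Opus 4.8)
The plan is to derive the identity by postcomposing the Corollary following Axiom A.3 with the canonical relation $L_I\colon *\nrightarrow\mathcal G\times\mathcal G$. Recall that $L_1=L_3\circ L_I$ by definition (Axiom A.5), and that the Corollary following A.3 states
\[
\overline{I_{rel}}\circ L_3=\overline{L_3}\circ\overline{T_{rel}}\circ(\overline{I_{rel}}\times\overline{I_{rel}}).
\]
Both sides are morphisms $\mathcal G\times\mathcal G\nrightarrow\overline{\mathcal G}$; precomposing them with $L_I\colon *\nrightarrow\mathcal G\times\mathcal G$ gives morphisms $*\nrightarrow\overline{\mathcal G}$. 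The left-hand side becomes $\overline{I_{rel}}\circ(L_3\circ L_I)=\overline{I_{rel}}\circ L_1$, which is exactly the left-hand side of the claim.

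For the right-hand side I would exploit the fact that $L_I$ is, as a point set, the graph of $I$, and that this graph is stable under the operations appearing there because $I$ is an involution (Axiom A.2). Explicitly, $(\overline{I_{rel}}\times\overline{I_{rel}})\circ L_I=\{(I(x),I(I(x)))\mid x\in\mathcal G\}=\{(I(x),x)\mid x\in\mathcal G\}$, and putting $y=I(x)$ this equals $\{(y,I(y))\mid y\in\mathcal G\}$, i.e.\ $\overline{L_I}$ as a set; similarly $\overline{T_{rel}}$ swaps the two factors and sends $\{(y,I(y))\}$ to $\{(I(y),y)\}=\{(z,I(z))\}$, again $\overline{L_I}$. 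Hence the right-hand side collapses to $\overline{L_3}\circ\overline{L_I}$, whose underlying point set is the set-theoretic composition $L_3\circ L_I=L_1$; as a morphism $*\nrightarrow\overline{\mathcal G}$ this is $\overline{L_1}$. Comparing the two sides yields $\overline{I_{rel}}\circ L_1=\overline{L_1}$.

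The equivalent formulation is then immediate: composing a Lagrangian with the graph $\overline{I_{rel}}$ of $I$ is the same as applying $I$ pointwise, so $\overline{I_{rel}}\circ L_1=\{I(w)\mid w\in L_1\}=I(L_1)$, whence $I(L_1)=\overline{L_1}$ with $L_1$ viewed as an immersed Lagrangian submanifold of $\mathcal G$. I do not expect a genuine obstacle here: every composition that occurs is with the graph of one of the diffeomorphisms $I$ or $T$, so smoothness of the composites is automatic, and $\overline{I_{rel}}\circ L_1$ is Lagrangian since $L_1$ is (Axiom A.5) and $I$ is an antisymplectomorphism. The only thing demanding care is the bookkeeping of the bar decorations---keeping straight which copies of $\mathcal G$ carry $\omega$ and which carry $-\omega$---together with the observation, used repeatedly above, that a ``barred'' canonical relation differs from its unbarred counterpart only in the symplectic form on the ambient manifold and not as a point set, which is what makes the set-level identifications legitimate.
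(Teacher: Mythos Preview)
Your argument is correct and follows essentially the same route as the paper: precompose the corollary after A.3 with $L_I$, use $I^2=\Id$ to see that $T\circ(I\times I)$ fixes $L_I$ as a point set, and conclude $\overline{I_{rel}}\circ L_1=\overline{L_3}\circ\overline{L_I}=\overline{L_1}$. Your treatment of the bar bookkeeping is in fact a bit more explicit than the paper's.
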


\begin{proof}
We have that
\begin{eqnarray*}
\overline{I_{rel}}\circ L_1&=& \overline{I_{rel}}\circ L_3 \circ L_1\\
 &\stackrel{\mbox{Cor.1}}{=}& \overline{L_3} \times \overline{T_{rel}} \circ (\overline{I} \times \overline{I})\circ L_I.
\end{eqnarray*}
As sets, we have that
\begin{eqnarray*}
\overline{T_{rel}}\circ (\overline{I} \times \overline{I})\circ L_I&=& T(I\times I (L_I))\\
L_I&=&\{(x,\, I(x)), \, x \in \mathcal G\}\\
I\times I (L_I)&=& \{(I(x), I^2(x)),\, x\in \mathcal G \}\\
&\stackrel{A.2}{=}&\{(I(x), x),\, x\in \mathcal G\}\\
T(I\times I (L_I))&=& \{(x, I(x)),\, x \in \mathcal G\}= L_I. 
\end{eqnarray*}
From the last equation we get
\[\overline{T_{rel}}\circ (\overline{I} \times \overline{I})\circ L_I= \overline{L_1}
\] and therefore
\[\overline{I_{rel}}\circ L_1 = \overline{L_3} \circ \overline{L_I}= \overline{L_1}.\]

\end{proof}

\item \textbf{\underline{A.6}} 

\begin{enumerate}[1.]
\item $L_3\circ (L_1 \times \Id)$ and $L_3\circ (\Id \times L_1)$ are immersed submanifolds of $\mathcal G \times \mathcal G.$
\item $L_3\circ (L_1 \times \Id)$ and $L_3\circ (\Id \times L_1)$ are Lagrangian submanifolds of $\overline{\mathcal G}\times \mathcal{G}$.
\item If we define the morphism $$L_2:=L_3\circ (L_1 \times \Id)\colon  \mathcal G \nrightarrow \mathcal G,$$
then the following equations hold
\begin{enumerate}[1.]
\item
\begin{equation*}
L_2=L_3\circ (\Id \times L_1).
\end{equation*}
\item $L_2$ leaves $L_1$ and $L_3$ invariant, i.e.
\begin{eqnarray} 
L_2\circ L_1&=& L_1\label{invariance1}\\
L_2\circ L_3&=& L_3\circ (L_2 \times L_2)=L_3\label{invariance3}.
\end{eqnarray}
\item 
\begin{equation}
\overline{I_{rel}}\circ L_2= \overline{L_2}\circ \overline{I_{rel}}\mbox{ and } L_2^{\dagger}=L_2.\label{symmetric}
\end{equation}
\end{enumerate}
\end{enumerate}
\begin{corollary}\label{invariance2}\emph{$L_2$ is idempotent, i.e.}
\begin{equation}\label{invari2}
L_2\circ L_2= L_2. 
\end{equation}
\end{corollary}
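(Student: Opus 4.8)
The plan is to read off the idempotency of $L_2$ as a purely formal consequence of the invariance relation $L_2\circ L_3=L_3$ recorded in~\eqref{invariance3}, combined with the defining identity $L_2=L_3\circ(L_1\times\Id)$ from Axiom A.6 and the (unconditional) associativity of composition of relations. The computation is essentially one line; the only point requiring attention is to make sure every composition that appears is one the axioms already guarantee to be a morphism of $\mbox{\textbf{Symp}}^{Ext}$, so that the set-theoretic equalities upgrade to equalities of immersed Lagrangian submanifolds.

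Concretely, I would start from the right, unfold the second factor of $L_2\circ L_2$ by its definition, and then reassociate so that $L_2$ meets $L_3$ first:
\[
L_2\circ L_2 \;=\; L_2\circ\bigl(L_3\circ(L_1\times\Id)\bigr) \;=\; (L_2\circ L_3)\circ(L_1\times\Id).
\]
At this stage~\eqref{invariance3} supplies $L_2\circ L_3=L_3$, whence
\[
L_2\circ L_2 \;=\; L_3\circ(L_1\times\Id) \;=\; L_2,
\]
which is the assertion~\eqref{invari2}. Along the way this also shows that the composite $L_2\circ L_2$ is indeed a morphism, namely the immersed Lagrangian submanifold $L_2$ itself.

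I do not expect any genuine obstacle here. The submanifold $L_3\circ(L_1\times\Id)=L_2$ is a morphism by Axiom A.6, the composite $L_2\circ L_3$ is a morphism equal to $L_3$ by~\eqref{invariance3}, and the only remaining step is the associativity of composition of subsets of Cartesian products, which holds in $\mbox{\textbf{Symp}}^{Ext}$ with no regularity hypothesis whatsoever (the delicate feature of $\mbox{\textbf{Symp}}^{Ext}$ is whether a composite of canonical relations is again a canonical relation, not whether composition is associative). Hence the one thing worth making explicit in the write-up is precisely that associativity remark. As a sanity check it is worth keeping in mind that idempotency of $L_2$ is exactly what one expects once $L_2$ is recognized as (the graph of) a projection onto the ``unit'' locus of $\mathcal G$, the role it will play in extracting the Poisson ``space of objects''.
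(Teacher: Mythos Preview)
Your argument is correct. Note, however, that it differs from the paper's indicated route: the paper's one-line proof invokes the definition of $L_2$ together with associativity~\eqref{asso} and the unit axiom~\eqref{unit}, i.e.\ it unfolds both copies of $L_2$ and computes
\[
L_2\circ L_2
= L_3\circ(\Id\times L_3)\circ(L_1\times L_1\times\Id)
\stackrel{\eqref{asso}}{=} L_3\circ(L_3\times\Id)\circ(L_1\times L_1\times\Id)
\stackrel{\eqref{unit}}{=} L_3\circ(L_1\times\Id)=L_2.
\]
You instead leave the outer $L_2$ intact and appeal directly to the invariance~\eqref{invariance3}, which is also part of Axiom~A.6 and hence equally available at this point. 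Your route is shorter and avoids unpacking associativity; the paper's route has the minor advantage of showing that~\eqref{invari2} already follows from A.4, A.5 and the definition of $L_2$, without using the remaining clauses of A.6. Either derivation is perfectly acceptable, and your remark that set-theoretic associativity of relation composition requires no regularity hypothesis is the right justification for the reassociation step.
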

\begin{proof}
It follows directly from the definition of $L_2$ and Equations \ref{asso} and \ref{unit}.
\end{proof}

\begin{remark}
\emph{The following is an interpretation of the axioms of the relational symplectic groupoid (for simplicity we present them in the case when $\mathcal G$ is a group):}
\emph{
\begin{itemize}
\item The cyclicity axiom  (A.1) encodes the cyclic behavior of the multiplication and inversion maps for groups, namely, if $a,b,c$ are elements of a group $G$ with unit $e$ such that $abc=e$, then $ab=c^{-1}, \, bc= a^{-1}, ca=b^ {-1}$.
\item (A.2) encodes the involutivity property of the inversion map of a group, i.e. $(g^{-1})^{-1}=g, \forall g \in G$.
\item (A.3) encodes the compatibility between multiplication and inversion:
$$(ab)^{-1}=b^{-1}a^{-1}, \forall a,b \in G.$$
\item (A.4) encodes the associativity of the product: $a(bc)=(ab)c, \forall a,b,c \in G$.
\item (A.5) encodes the property of the unit of a group of being idempotent: $ee=e$.
\item The axiom (A.6)  states an important difference between the construction of relational symplectic groupoids and usual groupoids. The compatibility between the multiplication and the unit is defined up to an equivalence relation, denoted by $L_2$, whereas for groupoids such compatibility is strict; more precisely, for groupoids such equivalence relation is the identity. 
In addition, the multiplication and the unit are equivalent with respect to $L_2$.
\end{itemize}
}
\emph{This description explains why the choice of the axioms of the relational symplectic groupoid are \emph{natural}
\footnote{Notice that any discrete group is actually a 
relational symplectic groupoid and also a symplectic groupoid with the zero symplectic structure.}.}
\end{remark}

\begin{remark} \label{counter1}
\emph{Equations \eqref{unit}, \eqref{invariance1}, \eqref{invariance3}, \eqref{symmetric} and \eqref{invari2} have to be stated as part of the axioms and they cannot be deduced as corollaries. Here there is an example of a structure that satisfies  the axioms from A.1. to A.4 but not A.5 or A.6.
\begin{enumerate}[1.]
\item $\mathcal G = \mathbb Z$ (as a non connected zero dimensional symplectic manifold)
\item $L=\{(n,m,-n-m-1) \in \mathbb Z ^3\}$
\item $I\colon  n \mapsto -n$
\end{enumerate}
For this example, the spaces $L_i$ are given by
\begin{eqnarray*}
L_1&=& \{ 1\}\\
L_2&=& \{ (m,m+2)\mid m\in \mathbb Z\}\\
L_3&=&\{(m,n,m+n+1)\mid n,m \in \mathbb Z ^2 \}
\end{eqnarray*}
for which we get that
\begin{eqnarray*}
L_3\circ(L_1 \times L_1)&=&\{3\} \neq L_1\\
L_2\circ L_1&=&\{3\} \neq L_1\\
L_2\circ L_2&=&\{(m,m+4)\mid m\in \mathbb Z\} \neq L_2\\
L_2\circ L_3&=&\{(m,n,m+m+3)\mid m,n \in \mathbb Z\} \neq L_3\\
\overline{I_{rel}}\circ L_2&=&(m, -m-2) \neq (m,-m+2)= \overline{L_2} \circ \overline {I_{rel}}.
\end{eqnarray*}
\begin{remark}\emph{
This counterexample has also a finite set version, replacing $\mathbb Z$ by $\mathbb Z / k \mathbb Z$, with $k \geq 3$.}
\end{remark}
}

\end{remark}

 
\end{itemize}

\begin{figure}
\centering%
\center{\includegraphics[scale=0.50]{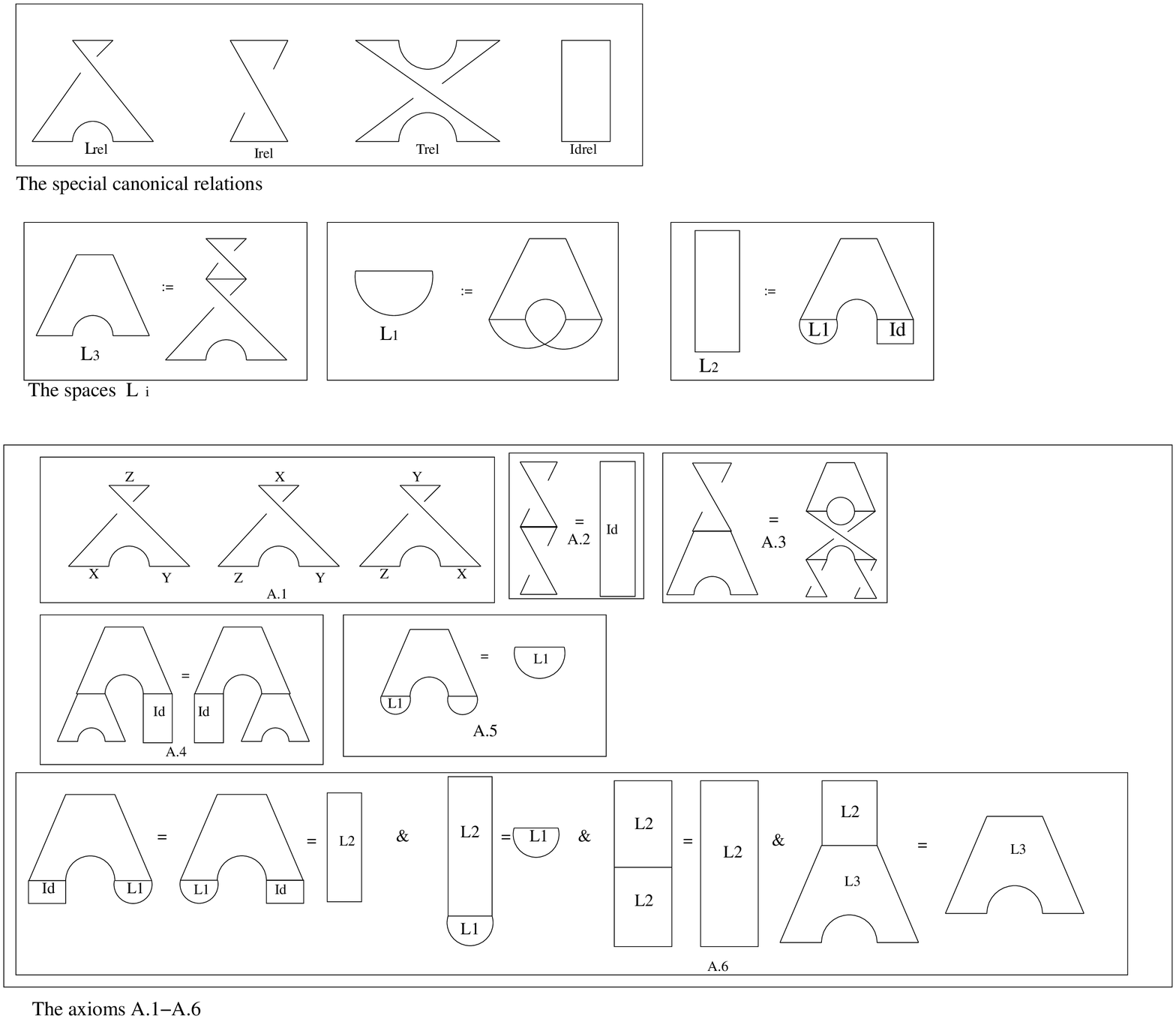}}
\caption{\underline{Relational symplectic groupoid: Diagrammatics.} The morphisms 
$I_{rel}$ and $L_{rel}$ are represented by a twisted stripe and pair of \emph{paper doll pants} respectively, 
and the induced immersed canonical relations $L_1, L_2$ and $L_3$ are constructed as compositions of $L$ and $I$. 
As it is shown in the Figure, they should satisfy  the previously defined compatibility axioms. The horizontal segments in the boundary 
of the surfaces represent the weak symplectic manifold $\mathcal G$. the non horizontal segments have no meaning.}
\label{fig:Ax}
\end{figure}
\subsection{The categoroid \textbf{RSGpd}}
We have stated so far the notion of relational symplectic groupoids in the extended symplectic category. These objects have a natural notion of morphism that is also defined in the context of canonical relations. Hence, as before, the composition of morphisms is only partially defined but it allows us to describe the categoroid  \textbf{RSGpd} of relational symplectic groupoids with suitable morphisms.
\begin{definition} \emph{
A morphism between two relational symplectic  groupoids $(\mathcal G,L_{\mathcal G},I_{\mathcal G})$ 
and $(\mathcal H,L_{\mathcal H},I_{\mathcal H})$ 
is a relation $F\colon  \mathcal G\nrightarrow \mathcal H$ satisfying the following properties:
\begin{enumerate}[1.]
 \item $F$ is an immersed Lagrangian submanifold of $\mathcal G\times \bar{\mathcal H}$.
 \item $F\circ I_{\mathcal G}=I_{\mathcal H} \circ F$. 
 \item $L_{\mathcal H}\circ (F\times F)=F\circ L_{\mathcal G}.$
\end{enumerate}
}
\end{definition}

\begin{definition}\emph{
A morphism of relational symplectic groupoids $F\colon  \mathcal G \to \mathcal H $ is called an \textbf{equivalence} 
if the transpose canonical relation $F^{\dagger}$ is also a morphism.}
\end{definition}
\begin{remark}\label{equiv}\emph{From the definition, it follows that an equivalence $F$ satisfies the following compatibility conditions with respect to $L_1$ and $L_2$:
\begin{eqnarray*}
F\circ (L_1)_{\mathcal G}&=& (L_1)_{\mathcal H}\\
F\circ (L_1)_{\mathcal H}&=& (L_1)_{\mathcal G}\\
F^{\dagger}\circ F&=& (L_2)_{\mathcal G} \label{1}\\
F\circ F^{\dagger}&=& (L_2)_{\mathcal H}. \label{2}.
\end{eqnarray*}
} \end{remark}

The following are some examples of equivalences.
\begin{example}\emph{Let $(\mathcal G, L, I)$ be a relational symplectic groupoid. Then $L_2$ is an equivalence between $\mathcal G$ and itself. \\
To check that $L_2$ is a morphism of relational symplectic groupoids, we observe that, by Equation \ref{symmetric} $L_2$ commutes with $I$ and by Equation \ref{invariance3} we get that 
\begin{eqnarray*}
L_{\mathcal G}\circ (L_2 \times L_2)&=& I\circ L_3\circ (L_2\times L_2)\\
                                                                &=& I \circ L_3= I \circ L_2 \circ L_3\\
                                                                &=& L_2 \circ I \circ L_3= L_2 \circ L_{\mathcal G}. 
                                                                \end{eqnarray*} 
Since $L_2$ is self transposed, it follows that $L_2$ is an equivalence.                                                                
}
\end{example}
\begin{example}\emph{
For a relational symplectic groupoid $(\mathcal G, L, I)$ the map $I$ is an equivalence from $(\mathcal G, L,I)$ to $(\overline {\mathcal G}, L^{'}, I)$, where $L^{'}= L \circ T_{rel}$}.
\end{example}
In the next section we give additional examples of equivalences, after describing some special case of relational symplectic groupoids.

\section{The regular case}
The next set of axioms defines a particular type of relational symplectic groupoids  which will allow us to relate the construction of relational symplectic groupoids for Poisson manifolds to the usual symplectic groupoids for the integrable case. Before this, we introduce the notion of immersed coisotropic submanifolds for weak symplectic manifolds.
\begin{definition}\emph{ An immersed coisotropic submanifold of a weak symplectic manifold $M$ is a pair $(\phi, C)$ such that
\begin{enumerate}[1.]
 \item $C$ is a smooth Banach manifold.
 \item $\phi\colon C \to M$ is an immersion.
\item $T\phi_x$ applied to $T_xC$ is a coisotropic subspace of $T_{\phi(x)}M,\, \forall x \in C$.
\end{enumerate}}
\end{definition}
\begin{definition}\emph{
A relational symplectic groupoid $(\mathcal G,\, L,\, I)$ is called \textbf{regular} (short RRSG) if the following three axioms A.7, A.8 and A.9 are satisfied. Consider $\mathcal G$ 
as a relation $* \nrightarrow \mathcal G$ denoted by $\mathcal G_{rel}$.}
\end{definition}
\begin{itemize}
 \item \textbf{\underline{A.7}} 
 \begin{equation} \label{C}
 C:=L_2 \circ \mathcal G_{rel}
 \end{equation} 
 is an immersed submanifold of $\mathcal G$. 
\end{itemize}
\begin{corollary} \label{equi}\emph{
$L_2$ is an equivalence relation in $C$}.
\end{corollary}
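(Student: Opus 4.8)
The plan is to check that the set underlying the canonical relation $L_2\subset\mathcal G\times\mathcal G$ defines an equivalence relation on the set $C$. Since Axiom A.7 already supplies the analytic content—namely that $C=L_2\circ\mathcal G_{rel}$ is a genuine immersed submanifold—only the three set-theoretic properties, reflexivity on $C$, symmetry, and transitivity, need to be verified, and each will be read off from an axiom or a corollary established above.

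First I would unwind the definition $C=L_2\circ\mathcal G_{rel}$. Since $\mathcal G_{rel}\colon *\nrightarrow\mathcal G$ is all of $\mathcal G$, the composition is simply the image of the relation $L_2$, i.e. $C=\{\,y\in\mathcal G\mid \exists\, x\in\mathcal G,\ (x,y)\in L_2\,\}$. By the self-transposition $L_2^{\dagger}=L_2$ from \eqref{symmetric}, the image of $L_2$ coincides with its domain, so $C$ equals $\{\,x\in\mathcal G\mid \exists\, y,\ (x,y)\in L_2\,\}$ as well; in particular $L_2\subset C\times C$, so $L_2$ is genuinely a relation on the set $C$.

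Symmetry of this relation is then precisely $L_2^{\dagger}=L_2$, again \eqref{symmetric}. Transitivity is precisely the inclusion contained in $L_2\circ L_2=L_2$ (Corollary \ref{invariance2}): if $(x,y)\in L_2$ and $(y,z)\in L_2$, then $(x,z)\in L_2\circ L_2=L_2$. Reflexivity on $C$ follows by combining the two: given $x\in C$, pick $w$ with $(w,x)\in L_2$; symmetry gives $(x,w)\in L_2$, and applying transitivity to the pair $(x,w)$, $(w,x)$ yields $(x,x)\in L_2$.

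The argument is essentially formal, so I do not expect any real obstacle. The one place that deserves a little care is the identification of $C$ with the common domain and image of $L_2$, which is exactly where \eqref{symmetric} is used, together with the observation that Axiom A.7 is what upgrades the subset $C$ to an immersed submanifold, so that the phrase ``equivalence relation in $C$'' is meaningful in the intended smooth setting and not merely at the level of sets.
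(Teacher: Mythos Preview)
Your proposal is correct and follows essentially the same approach as the paper: both establish $L_2\subset C\times C$, then read off symmetry from \eqref{symmetric}, transitivity from the idempotency $L_2\circ L_2=L_2$ of Corollary~\ref{invariance2}, and deduce reflexivity on $C$ by picking a partner for $x\in C$ and applying symmetry followed by transitivity. The only cosmetic difference is that the paper obtains $L_2\subset C\times C$ from the idempotency $L_2=L_2\circ L_2$ directly, whereas you obtain it by identifying $C$ with both the image and (via $L_2^{\dagger}=L_2$) the domain of $L_2$.
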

\begin{proof}
By Equation \ref{invariance2}
\begin{equation*}
L_2=L_2\circ L_2 \subset L_2\circ (\mathcal G\times \mathcal G)= C\times C\colon  *\nrightarrow \mathcal G \times \mathcal G,
\end{equation*}
so $L_2$ is a relation on $C$. By Equation \ref{invariance2}, $L_2$ is transitive, by Equation \ref{symmetric} it is symmetric and, for any $x \in C$, by definition, there exists $y$ such that $(x,y)\in L_2$ and by symmetry and transitivity of $L_2$, we conclude that $(x,x)\in L_2$, hence, $L_2$ is an equivalence relation.
\end{proof}
\begin{corollary}\emph{
$C$ is an immersed coisotropic submanifold of $\mathcal G$.}
\end{corollary}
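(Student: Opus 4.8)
The plan is to show that the immersed submanifold $C = L_2 \circ \mathcal{G}_{rel}$ is coisotropic by working pointwise, reducing the statement to a linear-algebra fact about the tangent spaces of the immersed Lagrangian $L_2 \subset \overline{\mathcal{G}} \times \mathcal{G}$ together with the idempotency relation $L_2 \circ L_2 = L_2$ from Corollary \ref{invariance2}. First I would fix a point $c \in C$, so there is a point of $L_2$ lying over $c$ in the second factor; call the corresponding tangent space $\Lambda := T_{(a,c)}L_2 \subset \overline{V} \oplus V$ where $V = T_c\mathcal{G}$ (with appropriate identifications along the immersion). Since $L_2$ is Lagrangian in $\overline{\mathcal{G}} \times \mathcal{G}$, $\Lambda$ is a Lagrangian subspace of $\overline{V} \oplus V$. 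The tangent space $T_cC$ is then the image of $\Lambda$ under the second projection, i.e. $T_cC = \mathrm{pr}_2(\Lambda)$.

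The key step is the linear-algebra lemma: if $\Lambda \subset \overline{V}\oplus V$ is Lagrangian and idempotent as a relation (meaning $\Lambda \circ \Lambda = \Lambda$, which holds because $L_2\circ L_2 = L_2$ and composition of relations passes to tangent spaces since the relevant compositions are clean/immersed by axiom A.6), then the image $\mathrm{pr}_2(\Lambda)$ is a coisotropic subspace of $V$. I would prove this by a direct computation: let $W = \mathrm{pr}_2(\Lambda)$ and show $W^\perp \subseteq W$. An element $v \in W^\perp$ satisfies $\omega(v,w) = 0$ for all $w \in W$; using that $\Lambda = \Lambda^\perp$ (Lagrangian) and the idempotency $\Lambda\circ\Lambda = \Lambda$, one checks that $(0,v)$ pairs trivially with all of $\Lambda$ under the symplectic form of $\overline{V}\oplus V$, hence $(0,v) \in \Lambda^\perp = \Lambda$, and therefore $v = \mathrm{pr}_2(0,v) \in W$. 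The idempotency is what makes $(0,v) \in \Lambda$ rather than merely in a larger space; without it the projection of a Lagrangian need only be coisotropic-or-worse. Concretely, $(0,v) \in \Lambda\circ\Lambda$ because we can write it as the composition of $(0,0)\in\Lambda$ with $(0,v)$, once we know $(0,v)\in\Lambda$ — so the argument is: $v\in W^\perp$, combined with $\Lambda$ Lagrangian, forces $(0,v)\in\Lambda^\perp = \Lambda$, giving $v \in W$.

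I would also record that the immersion hypothesis is inherited: $C$ comes with the immersion $\phi: C \to \mathcal{G}$ obtained from axiom A.7 together with the immersion data of $L_2$ and the (trivial) relation $\mathcal{G}_{rel}$, and $C$ is a smooth Banach manifold by A.7 itself, so conditions 1 and 2 of the definition of immersed coisotropic submanifold are automatic; only condition 3 — the pointwise coisotropy just established — requires argument. In the weak symplectic (infinite-dimensional) setting, one must be slightly careful that $W^\perp \subseteq W$ is the correct formulation of coisotropic, since $(W^\perp)^\perp = W$ may fail; but the definition of coisotropic used throughout the paper is exactly $W^\perp \subseteq W$, and the argument above delivers this inclusion directly, so no completeness or reflexivity issue arises.

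The main obstacle I anticipate is the passage from the relational identity $L_2 \circ L_2 = L_2$ of submanifolds to the corresponding identity $\Lambda \circ \Lambda = \Lambda$ of tangent spaces at the chosen point: this requires that the composition defining $L_2\circ L_2$ be clean (the intersection governing the fibered product has the expected tangent space), which is guaranteed by the immersion clauses in axiom A.6 (parts 1 of the relevant items) asserting that the compositions in question are immersed submanifolds, so that tangent spaces of compositions of relations are the compositions of tangent spaces. Once this bookkeeping is in place, the coisotropy is the short linear computation sketched above; everything else is routine tracking of the immersion data.
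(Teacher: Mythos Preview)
Your core linear-algebra step is correct: if $\Lambda\subset\overline V\oplus V$ is Lagrangian and $W=\mathrm{pr}_2(\Lambda)$, then $v\in W^\perp$ gives $(-\omega\oplus\omega)\bigl((0,v),(x,y)\bigr)=\omega(v,y)=0$ for all $(x,y)\in\Lambda$, so $(0,v)\in\Lambda^\perp=\Lambda$ and $v\in W$. But notice this uses only that $\Lambda$ is Lagrangian; the idempotency $\Lambda\circ\Lambda=\Lambda$ never enters, and your discussion of it (together with the clean-composition worry at the end) is a red herring. The projection of \emph{any} Lagrangian subspace of $\overline V\oplus V$ to either factor is coisotropic.

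There is, however, a genuine gap in the identification $T_cC=\mathrm{pr}_2(\Lambda)$. For an arbitrary $(a,c)\in L_2$ you only get $\mathrm{pr}_2(T_{(a,c)}L_2)\subseteq T_cC$ (from $L_2\subset C\times C$), and there is no reason $\Lambda$ should sit inside $\overline V\oplus V$ with $V=T_c\mathcal G$ when $a\neq c$. The fix is to invoke reflexivity from Corollary~\ref{equi}: take $a=c$, so that $\Delta_C\subset L_2$ gives $\{(v,v):v\in T_cC\}\subset T_{(c,c)}L_2$ and hence the reverse inclusion $T_cC\subseteq\mathrm{pr}_2(T_{(c,c)}L_2)$.

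The paper's proof is shorter and bypasses the projection lemma entirely. From $L_2\subset C\times C$ (Corollary~\ref{equi}) one has $T_{(c,c)}L_2\subset T_cC\oplus T_cC$; since $T_{(c,c)}L_2$ is Lagrangian, taking symplectic orthogonals in $\overline{T_c\mathcal G}\oplus T_c\mathcal G$ reverses this to $T_cC^\perp\oplus T_cC^\perp\subset T_{(c,c)}L_2\subset T_cC\oplus T_cC$, and intersecting the outer inclusion with the diagonal $\Delta_{T_c\mathcal G}$ immediately yields $T_cC^\perp\subset T_cC$. Both approaches rest on the same inputs ($L_2$ Lagrangian, $L_2\subset C\times C$, and the diagonal sitting inside $L_2$), but the paper's avoids the detour through idempotency and the delicate identification of $T_cC$ as a projection.
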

\begin{proof}
By definition of $C$ we get that 
\[TL_2 \subset TC\oplus TC\] 
and by A.6. $TL_2$ is a Lagrangian subspace of $\overline{T\mathcal G} \oplus T\mathcal G$.  Therefore 
\[TC^{\perp}\oplus TC^{\perp}\subset TL_2\subset TC\oplus TC.\]
If we restrict to the diagonal $\triangle \mathcal G$, we get that
\[TC^{\perp}\cong (TC^{\perp}\oplus TC^{\perp})\cap \triangle \mathcal G \subset (TC\oplus TC)\cap \triangle \mathcal G \cong TC,\] hence 
$C$ is coisotropic.
\end{proof}
The following Proposition allows us (in principle at the infinitesimal level), to regard the equivalence relation given by $L_2$ as the equivalence relation given by the characteristic 
foliation of $C$.
\begin{proposition}\emph{
Let 
$$R^C:=\{(x,y)\in C\times C \mid L_x=L_y\},$$
where $L_x$ is the leaf of the characteristic foliation through the point $x \in C$. Let $(x,y)\in R^C\cap L_2$.
Then
\begin{equation}\label{char}
T_{(x,x)}R^C=T_{(x,x)}L_2.
\end{equation}}
\end{proposition}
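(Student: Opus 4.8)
The plan is to prove the equality of tangent spaces by a double inclusion, using the explicit description of $L_2$ and the characteristic foliation at the infinitesimal level. First I would set up notation: at a point $x\in C$ we have the coisotropic subspace $T_xC$ of the symplectic space $T_x\mathcal{G}$, with characteristic distribution $(T_xC)^{\perp}\subset T_xC$. Since $(x,y)\in R^C$ means $x$ and $y$ lie on the same leaf of the characteristic foliation, the tangent space $T_{(x,x)}R^C$ should be identifiable with pairs $(v,w)\in T_xC\oplus T_xC$ whose images in the leaf space agree, i.e. $v-w\in (T_xC)^{\perp}$ (after the usual identification of nearby fibres along the leaf); concretely $T_{(x,x)}R^C=\{(v,w)\in T_xC\oplus T_xC \mid v-w\in (T_xC)^{\perp}\}$. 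On the other side, from Axiom A.6 and the corollaries above we know $L_2$ is a Lagrangian subspace of $\overline{T_x\mathcal{G}}\oplus T_x\mathcal{G}$ satisfying the inclusions established in the proof that $C$ is coisotropic, namely $(T_xC)^{\perp}\oplus (T_xC)^{\perp}\subset T_{(x,x)}L_2\subset T_xC\oplus T_xC$.

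Next I would compute the dimension (or, in the weak/infinite-dimensional setting, identify the subspace exactly, not just by counting). The subspace $\{(v,w)\in T_xC\oplus T_xC\mid v-w\in (T_xC)^{\perp}\}$ is Lagrangian in $\overline{T_x\mathcal{G}}\oplus T_x\mathcal{G}$: it contains $(T_xC)^{\perp}\oplus(T_xC)^{\perp}$, its image in $\underline{T_xC}\oplus\underline{T_xC}$ is the diagonal, and the diagonal is Lagrangian in $\overline{\underline{T_xC}}\oplus\underline{T_xC}$, so Lemma \ref{Coisotropic} applies and gives that it is Lagrangian. Thus $T_{(x,x)}R^C$ is a Lagrangian subspace sitting between $(T_xC)^{\perp}\oplus(T_xC)^{\perp}$ and $T_xC\oplus T_xC$, and so is $T_{(x,x)}L_2$. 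The key point is then that a Lagrangian subspace $L$ of $T_x\mathcal{G}^2$ trapped between $C^{\perp}\oplus C^{\perp}$ and $C\oplus C$ is uniquely determined by its reduction $\underline{L}\subset \underline{C}\oplus\underline{C}$ (this is exactly the content of the reduction correspondence underlying Lemma \ref{Coisotropic}: $L=p^{-1}(\underline L)$). So it suffices to show both reductions coincide.

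Now I would show $\underline{T_{(x,x)}L_2}$ equals the diagonal of $\underline{T_xC}$, which forces $T_{(x,x)}L_2=T_{(x,x)}R^C$. For this, use that $L_2$ is, by Corollary \ref{equi}, an equivalence relation on $C$ whose reduction is an equivalence relation on the leaf space $\underline{C}$; being Lagrangian and reflexive (it contains the diagonal because $(x,x)\in L_2$ by the argument in Corollary \ref{equi}), its reduction is a Lagrangian subspace of $\overline{\underline{T_xC}}\oplus\underline{T_xC}$ containing the diagonal, hence equals the diagonal by dimension/maximality of Lagrangians. Since the reduction of $T_{(x,x)}R^C$ is also the diagonal of $\underline{T_xC}$, the two Lagrangians have equal reductions and therefore are equal, establishing \eqref{char}. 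The hypothesis $(x,y)\in R^C\cap L_2$ is used to guarantee that the relevant leaf/equivalence class is nonempty and that the base point $x$ is genuinely a point where both $R^C$ and $L_2$ pass through $(x,x)$, so the tangent spaces are taken at the same point.

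The main obstacle I anticipate is the first step: giving a clean, rigorous description of $T_{(x,x)}R^C$. The set $R^C$ is defined via the global condition "same leaf," and differentiating it requires some care — one must argue that near $(x,x)$ the leaf relation is a (possibly non-closed, immersed) submanifold and compute its tangent space as $\{(v,w)\in T_xC\oplus T_xC \mid v-w\in (T_xC)^{\perp}\}$. In finite dimensions this is standard foliation theory, but in the weak symplectic Banach setting one must either invoke a local normal form for the characteristic foliation or work purely infinitesimally, perhaps taking the displayed formula for $T_{(x,x)}R^C$ as the definition of the infinitesimal version of $R^C$ (which is presumably what the phrase "in principle at the infinitesimal level" in the preceding sentence is signalling). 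Once that identification is in hand, the rest is the reduction bookkeeping above, which is routine given Lemma \ref{Coisotropic} and the inclusions already proved.
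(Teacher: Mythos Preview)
Your proposal is correct and very close in spirit to the paper's argument: both identify $T_{(x,x)}R^C=\{(v,w)\in T_xC\oplus T_xC\mid v-w\in (T_xC)^\perp\}$, both invoke Lemma~\ref{Coisotropic} with the diagonal of $\underline{T_xC}$ as the reduced Lagrangian, and both finish with a ``Lagrangian contained in Lagrangian implies equal'' step.

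The one structural difference is where that last step is applied. The paper proves the inclusion $T_{(x,x)}R^C\subset T_{(x,x)}L_2$ \emph{directly}, by writing $(X,Y)=(X-Y,0)+(Y,Y)$ and observing that $(X-Y,0)\in (T_xC)^\perp\oplus(T_xC)^\perp\subset T_{(x,x)}L_2$ while $(Y,Y)\in T_{(x,x)}L_2$ because $\Delta_C\subset L_2$ (Corollary~\ref{equi}); then it shows $T_{(x,x)}R^C$ is Lagrangian exactly as you do and concludes equality at the unreduced level. You instead push both subspaces down to the reduction, argue that $\underline{T_{(x,x)}L_2}$ is a Lagrangian containing the diagonal and hence equal to it, and then pull back via $L=p^{-1}(\underline L)$. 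The paper's route is marginally shorter since it never needs to verify separately that the reduction of $T_{(x,x)}L_2$ is \emph{exactly} the diagonal; your route makes the symmetry between the two sides more visible and isolates the correspondence $L\leftrightarrow\underline L$ as the real reason for the equality. Either way, the ingredients are the same.

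Your closing remark about the obstacle in identifying $T_{(x,x)}R^C$ is well taken; the paper handles this implicitly, simply using that $(X,Y)\in T_{(x,x)}R^C$ entails $X-Y\in (T_xC)^\perp$, which is precisely the infinitesimal description you wrote down.
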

\begin{proof}
First we will prove that $T_{(x,x)}R^C\subset T_{(x,x)}L_2$. For this, consider $(X,Y)\in T_{(x,x)}R^C$, since $X-Y \in T_xC^{\perp}$, we get that 
\begin{equation}\label{E1}
(X-Y,0) \in T_{(x,x)}C^{\perp}\oplus TC^{\perp}.
\end{equation}
Since $L_2\subset C\times C$ and $L_2$ is Lagrangian
\begin{equation}\label{E2}
T_{(x,x)}C^{\perp}\oplus T_{(x,x)}C^{\perp} \subset T_{(x,x)}L_2\subset T_xC \oplus T_xC. 
\end{equation} 
Combining Equations \ref{E1} and \ref{E2}, we get that
\begin{equation}\label{E3}
(X-Y,0)\in T_{(x,x)}L_2.
\end{equation}
Since the diagonal $\triangle C$ is contained in $L_2$ (from Corollary \ref{equi}), then 
\begin{equation}\label{E4}
(Y,Y)\in T_{(x,x)}L_2, \forall Y \in C.
\end{equation}
From equations \eqref{E1} and \eqref{E4}, we conclude that 
\begin{equation*}
(X,Y)=(X-Y,0)+(Y,Y)\in T_{(x,x)}L_2,
\end{equation*}
as we wanted.
Now we prove that $T_{(x,x)}R^C$ is a Lagrangian subspace of $T_xC\oplus T_xC$. For this, first observe that 
\begin{equation*}
T_xC^{\perp} \oplus T_x^{\perp}C \subset T_{(x,x)}R^C \subset T_xC \oplus T_xC
\end{equation*}
and that the canonical projection of $T_{(x,x)}R^C$ in the symplectic reduction $\underline C \oplus \underline C$ is $\triangle C$, 
which is Lagrangian.\\
Applying lemma \ref{Coisotropic} we conclude that $T_{(x,x)}R^C$ is Lagrangian. Now, since $T_{(x,x)}L_2$ is also Lagrangian by the axioms above and it contains $T_{(x,x)}R^C$ as a subspace, it follows that 
\begin{equation*}
T_{(x,x)}L_2=T_{(x,x)}L_2^{\perp}\subset T_{(x,x)}R^C=T_{(x,x)}R^C,
\end{equation*}
hence $T_{(x,x)}L_2=T_{(x,x)}R^C$, as we wanted.
\end{proof}


\begin{itemize}
\item \textbf{\underline{A.8}} The submanifold $L_2\subset C\times C$ has finite codimension and furthermore the partial reduction $\underline{L_1}= L_1 / (L_2\cap L_1\times L_1)$ is a finite dimensional smooth manifold. 
We will denote $\underline{L_1}$ by $M$.
\item \textbf{\underline{A.9}} $S:= \{(c,[l]) \in C \times M: \exists l \in [l], g \in \mathcal G \vert (l,c,g) \in L_3\}$ is an immersed submanifold of $\mathcal G \times M $ satisfying 
the following three conditions:
\begin{enumerate}[1.]
\item
\begin{equation} \label{Source}
(S\times S)\circ L_2^{rel}= \triangle_M,
\end{equation}
where $L_2^{rel}: pt \nrightarrow C\times C$ is the induced relation from $L_2$.
\item 
The induced relation
\begin{equation}\label{subm}
dS:= TS: T\mathcal G \nrightarrow TM
\end{equation}
is surjective.
It is easy to check that the first condition implies the following 
\begin{corollary}\label{source}\emph{
\begin{enumerate}[1.]
\item The relation \[\, \,\,\,\,\,\,\,\,\,\,\;\;\,\;\;\;\;\;\;\;\;\;\,\;\;\,T:= \{(c,[l]) \in C \times M\colon  \exists l \in [l], g \in \mathcal G \vert (c,l,g) \in L_3\}=I\circ S\] is an immersed submanifold of $\mathcal G \times M$.
\item $S$ and $T$ regarded as relations from $C$ to $M$ are graphs of surjective submersions $s$ and $t$ respectively. 
\end{enumerate}
}
\end{corollary}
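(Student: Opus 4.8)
The plan is to reduce the whole statement to two facts about $S$, viewed as a relation $C\nrightarrow M$: that it is the graph of a surjective submersion $s\colon C\to M$, and that it is an immersed submanifold. Everything about $T$ then follows formally, since $T$ is obtained from $S$ by applying the diffeomorphism $I$ of $\mathcal G$ to the first factor (together with the diffeomorphism of $M$ induced by $I$, which one expects to be the identity).

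First I would analyze $S$. Its domain is all of $C$: if $c\in C=L_2\circ\mathcal G_{rel}$ then $c$ lies in the domain of $L_2$, so there is $g$ with $(c,g)\in L_2$, and unwinding $L_2=L_3\circ(L_1\times\Id)$ produces $l\in L_1$ with $(l,c,g)\in L_3$, i.e. $(c,[l])\in S$. It is single-valued: if $(c,[l_1]),(c,[l_2])\in S$, then, since $L_2$ is an equivalence relation on $C$ (Corollary \ref{equi}) and hence contains $(c,c)$, feeding the pair $(c,c)$ into Equation \eqref{Source} forces $[l_1]=[l_2]$. Thus $S$ is the graph of a map $s\colon C\to M$, and the inclusion $\triangle_M\subseteq (S\times S)\circ L_2^{rel}$, which is half of Equation \eqref{Source}, shows $s$ is onto: every $[l]\in M$ equals $s(c_1)$ for some $(c_1,c_2)\in L_2\subset C\times C$. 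I regard this single-valuedness step --- using $\triangle_C\subset L_2$ so that Equation \eqref{Source} may be applied to constant pairs --- as the heart of the argument, and the part that is genuinely ``easy to check''.

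Next come smoothness and the submersion property. By Axiom A.9 the set $S$ is an immersed submanifold of $\mathcal G\times M$; being the graph of $s$, the projection $S\to C$ is a continuous bijection, and with the help of Axiom A.8 (finite codimension of $L_2\subset C\times C$ and finite dimensionality of $M$) one upgrades this to an isomorphism, so that $s$ is smooth; Axiom A.9(2), which says the induced relation $dS=TS\colon T\mathcal G\nrightarrow TM$ is surjective, then gives that $ds$ is surjective, and since $M$ is finite dimensional $\ker ds$ splits, whence $s$ is a submersion. For $T$ I would use the $I$-compatibility of $L_3$ coming from Axiom A.3 (equivalently, the corollary following it), namely $(x,y,z)\in L_3\Leftrightarrow(I(y),I(x),I(z))\in L_3$; together with the fact that $g$ ranges over all of $\mathcal G$, this turns the defining condition $(c,l,g)\in L_3$ of $T$ into $(I(l),I(c),g')\in L_3$. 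Because $I$ preserves $L_1$ (corollary following A.5) and commutes with $L_2$ (Equation \eqref{symmetric}), it descends to a diffeomorphism of $M=L_1/(L_2\cap L_1\times L_1)$; granting (via $L_2\circ L_1=L_1$) that this induced diffeomorphism is the identity, the class of $l$ is unchanged and $T=I\circ S$ holds on the nose. In any case $I$ restricts to a diffeomorphism of $C$ --- indeed $I(C)=I(L_2\circ\mathcal G_{rel})=L_2\circ\mathcal G_{rel}=C$, since $I$ commutes with $L_2$ --- so $T=(I|_C\times\Id_M)(S)$ is an immersed submanifold and the graph of $t:=s\circ I|_C$, again a surjective submersion (a diffeomorphism composed with the surjective submersion $s$).

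The step I expect to be most delicate is not a computation but the Banach-manifold regularity in the third paragraph: extracting honest smoothness of $s$ and pointwise surjectivity of $ds$ from the bare statements of Axioms A.8 and A.9(2), since surjectivity of the relation $TS$ is a priori weaker than pointwise surjectivity of the differential, and a bijective immersion need not be a diffeomorphism without the finite-codimension hypothesis. A secondary point requiring care is the bookkeeping that $I$ acts as the identity on the quotient $M$; this is genuinely an input from the unit axioms A.5--A.6 rather than from A.3 alone, consistently with the fact that the structure of Remark \ref{counter1} (which drops A.5) already violates $(l,I(l))\in L_2$. Note, however, that even without this last point one still obtains that $T$ is an immersed submanifold which is the graph of a surjective submersion, since it is the image of $S$ under a diffeomorphism of $C\times M$.
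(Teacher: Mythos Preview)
Your argument is correct and follows the same overall route as the paper's (very terse) proof: deduce that $S$ is the graph of a surjective map from the axiom $(S\times S)\circ L_2^{rel}=\triangle_M$ together with $\triangle_C\subset L_2$, obtain the submersion property from A.9(2), and then transport everything to $T$ via the diffeomorphism $I$. The one substantive difference is in part~(1): the paper attributes the equality $T=I\circ S$ to the cyclic symmetry A.1 alone, whereas you invoke A.3 (and the unit axioms for the induced action of $I$ on $M$). Your bookkeeping is the more accurate one --- cyclicity of $L$ by itself only yields the twisted relation $(a,b,c)\in L_3\Leftrightarrow(b,I(c),I(a))\in L_3$, which does not swap the first two slots; one really needs the compatibility $(a,b,c)\in L_3\Leftrightarrow(I(b),I(a),I(c))\in L_3$ coming from A.3. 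Your closing remark, that even without pinning down the induced map on $M$ one already has $T$ as the image of $S$ under a diffeomorphism of $C\times M$, is a clean way to secure both conclusions of the corollary while sidestepping that last piece of bookkeeping.
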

\begin{proof}
(1) follows from the cyclicity condition in A.1.  (2) follows by definition of $T$ and from the fact that, since Equation \ref{Source} holds, then if
$(x,[l])$ and $(x, [l]^{'})$ belong to $S\colon  C\to M$, then $([l], [l]^{'})\in \triangle_M$, which implies that $S$ is a surjective map and is clearly a submersion by Axiom 9, part 2.
\end{proof}
\item For any $f \in \mathcal C^{\infty}(M)$ the function $s^*f \in \mathcal C^{\infty}(C)$ is Hamiltonian with respect 
to the restriction of the symplectic form $\omega$ to $C$ \footnote{ This condition will be used to define a Poisson structure on $M$ and 
it is satisfied in all the examples of regular relational symplectic groupoids we have. }.
\end{enumerate}
\begin{remark}\emph{
The condition  given by Equation \ref{char} is at the level of tangent spaces. If  we want that $R^C=L_2$, we should impose a connectedness condition on the leaves of the characteristic foliation and the classes of $L_2$. The following is a modification of the example given in Remark \ref{counter1} of a structure satisfying all the axioms except the global version of Equation \ref{char}.
\begin{enumerate}[1.]
\item $\mathcal G = \mathbb Z$ 
\item $L=\{(n,m,-n-m-2k-1)\mid (m,n,k) \in \mathbb Z ^3\}$
\item $I: n \mapsto -n$
\end{enumerate}
For this example, the spaces $L_i$ and $C$ are given by}
\begin{eqnarray*}
L_1&=& \{ 2\mathbb Z +1\}\\
L_2&=& \{ (m,n)\mid m-n \in 2\mathbb Z\}\\
L_3&=&\{(m,n,m+n+2k+1)\mid n,m,k  \in \mathbb Z ^3 \}\\
C&=& \mathbb Z.
\end{eqnarray*}
\emph{Since $\mathbb Z$ is zero dimensional, we get that $C$ is also Lagrangian and for the symplectic reduction, $\underline{C}=*$. In the other hand,}
$$C / L_2= \mathbb Z/ 2 \mathbb Z \neq *.$$
\end{remark}
The following theorem connects the construction of relational symplectic groupoids in the regular case with the usual symplectic groupoids.
\begin{theorem}\label{TheoSym}\emph{ Let $(G,L,I)$ be a regular relational symplectic groupoid. Then $G:= C/L_2\rightrightarrows M$ is a topological groupoid over $M$. Moreover, if $G$ is a smooth manifold, then $G\rightrightarrows M$ is a symplectic groupoid over $M:= L_1/L_2$.
}
\end{theorem}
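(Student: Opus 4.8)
The plan is to build the groupoid structure on $G:=C/L_2$ directly from the canonical relations $L_1, L_2, L_3$ and the maps $s,t$ coming from $S$ and $T$, checking each groupoid axiom by translating the already-established relational identities into statements about the quotient sets. First I would set up the underlying spaces: by Axiom A.7 and Corollary \ref{equi}, $L_2$ is an equivalence relation on $C$, so $G=C/L_2$ is a well-defined (a priori topological) space; by Axiom A.8, $M=L_1/(L_2\cap L_1\times L_1)=\underline{L_1}$ is a manifold; and by Corollary \ref{source}, $s$ and $t$ are surjective submersions $C\to M$ that descend to maps $G\rightrightarrows M$ (well-definedness on $L_2$-classes is exactly Equation \eqref{Source}, $(S\times S)\circ L_2^{rel}=\triangle_M$).

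Next I would define the multiplication. The relation $L_3\subset \mathcal G^3$ descends to a relation on $G^3$ because $L_2$ leaves $L_3$ invariant (Equation \eqref{invariance3}); restricted to composable pairs — those $([g],[h])$ with $t(h)=s(g)$, say — associativity of the product follows from Axiom A.4, Equation \eqref{asso}, and the fact that $L_3$ is cyclically symmetric (A.1) plus the source/target compatibility in A.9 guarantees that composability is preserved and the product of composable elements is a single $L_2$-class. The unit map $M\to G$ comes from $L_1$: since $L_1$ is $I$-invariant (Corollary after A.5, $I(L_1)=\overline{L_1}$) and $L_2$-saturated in the appropriate sense, it defines a section $\epsilon\colon M\to G$, and the unit axioms $\epsilon(t(g))\cdot g = g = g\cdot \epsilon(s(g))$ are precisely the content of Equations \eqref{unit} and the defining relations $L_2=L_3\circ(L_1\times\Id)=L_3\circ(\Id\times L_1)$ from A.6, once one remembers that in $G$ the class of $L_2$ is the identity. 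Inversion on $G$ is induced by $I$: it is well-defined because $I$ is an antisymplectomorphism commuting with $L_2$ (Equation \eqref{symmetric}), and the inverse axioms $g\cdot g^{-1}=\epsilon(t(g))$, $g^{-1}\cdot g = \epsilon(s(g))$ are the translations of Axiom A.3 (compatibility of $L_3$ with $I$) together with A.5.

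For the second assertion, assuming $G$ is a smooth manifold, I would equip it with a symplectic form as follows: $C$ is an immersed coisotropic submanifold of $\mathcal G$ (Corollary after A.7), and by the Proposition establishing $T_{(x,x)}R^C=T_{(x,x)}L_2$, the fibers of $C\to G=C/L_2$ are (at the infinitesimal level, and by the implicit connectedness assumption) the leaves of the characteristic foliation of $C$; hence the restriction $\omega|_C$ descends to a symplectic form $\omega_G$ on the reduced manifold $G$. That the graph of multiplication is Lagrangian in $\overline{G}\times\overline{G}\times G$ follows because $L_3$ is Lagrangian in $\mathcal G^3$ and symplectic reduction of Lagrangians is Lagrangian (Definition \ref{canonicalproj} and Lemma \ref{Coisotropic}); this is exactly the statement that $G\rightrightarrows M$ is a symplectic groupoid. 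Finally I would note that the source map $s$ is Poisson for the induced Poisson structure on $M$ (built from the Hamiltonian condition in A.9, part 3), consistent with the usual picture.

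The main obstacle I anticipate is not any single algebraic identity — those are forced by A.1–A.6 — but rather the careful bookkeeping of \emph{composability} and \emph{well-definedness on quotients}: one must check that when two $L_2$-classes are multiplied via $L_3$ the result is again a single $L_2$-class and that this does not depend on representatives, which requires simultaneously using the invariance Equation \eqref{invariance3}, the source/target description in Corollary \ref{source}, and the fact (Equation \eqref{Source}) that $s,t$ are constant on $L_2$-classes; and for the smooth/symplectic part, the genuinely delicate point is passing from the infinitesimal equality $T R^C = T L_2$ to an honest statement that $\omega|_C$ descends, which is where the hypothesis ``$G$ is a smooth manifold'' (rather than merely topological) does essential work.
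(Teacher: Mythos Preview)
Your proposal is correct and follows essentially the same route as the paper: define the structure maps on $G=C/L_2$ as the images under the quotient projection of $L_1$, $L_3$, $I$, $s$, $t$, and deduce the groupoid axioms from A.1--A.6 together with Corollary~\ref{source}; the paper's proof is in fact much terser than yours, merely listing the data and declaring the verification easy. The delicate point you flag---that the symplectic form descends because the $L_2$--classes agree infinitesimally with the characteristic leaves (Equation~\eqref{char})---is exactly the content the paper also leaves implicit, absorbing it into the hypothesis that $G$ is smooth.
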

\begin{proof}
We denote by
\begin{eqnarray*}
p\colon  \mathcal G &\to& G\\
g&\mapsto& [g]
\end{eqnarray*}
the canonical projection with respect to the symplectic reduction of $C$. Then the following data
\begin{eqnarray*}
G_0&=& L_1/L_2 \\
G_1&=& C/L_2\\
G_2&=&C/L_2\times_{L_1/L_2} C/L_2
\\
m&=& p^3(L_3)\colon  G_2 \to G_1\\
\varepsilon&:& G_0 \to G_1:= \underline{\varepsilon}:L_1/L_2 \to C / L_2 \\
s&:&G_1 \to G_0:=\underline s\colon  C/L_2 \to M  \\
t&:&G_1 \to G_0:= \underline t\colon  C/L_2 \to M  \\
\iota&:& G_1 \to G_1:= \underline {I}\colon  C/L_2 \to C/L_2.
\end{eqnarray*}
corresponds to a groupoid structure.
Under the smoothness assumption for $\underline C$ and  by the finite dimensionality condition given in A.8 it follows that $G_1$ is a finite dimensional symplectic manifold and due to Corollary \ref{source}, the map $\underline s$ is a surjective submersion, hence, the fiber product $G_2$ is a  finite dimensional (topological) manifold. It is easy to check that the groupoid axioms are satisfied.
For the symplectic structure on $G\rightrightarrows M$, note that the projection of $L_3$ in $G$ is Lagrangian and restricted to $G_2$ is a map (due to Corollary \ref{source}). 

\end{proof}
\subsection{Poisson structure on $M$}
In this section, the goal is to relate the construction of the relational symplectic groupoids in the regular case  with 
Poisson structures in the space $M$. More precisely, we prove the existence and uniqueness of a Poisson bracket on 
$M$ compatible with a given regular relational symplectic groupoid $\mathcal G$. 
This theorem is the analog of the existence and uniqueness of a Poisson structure in the space of objects of usual 
symplectic groupoids \cite{Weinstein}. Namely,\begin{theorem}\label{Poisson base}
\emph{ \cite{Weinstein}. Let ${(G,\omega) \rightrightarrows M}$ be a symplectic groupoid over $M$ . Then there exists a unique Poisson structure $\Pi$ on $M$ such that the source map $s$ is a Poisson map (or equivalently the target map $t$ is an anti-Poisson map).}
\end{theorem}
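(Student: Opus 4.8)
The plan is to transport the canonical Poisson bracket of $\mathcal{C}^{\infty}(G)$ down to $M$ along the source map, using multiplicativity of $\omega$ only through one infinitesimal consequence. Let $\Lambda_m\subset G\times G\times\bar G$ be the Lagrangian graph of the groupoid multiplication, $\varepsilon\colon M\to G$ the unit section and $\iota\colon G\to G$ the inversion. First I would record the standard structural facts that drop out of the Lagrangian condition on $\Lambda_m$ by a finite-dimensional linear-algebra computation at points of the form $(g,\varepsilon(s(g)),g)$ and $(\varepsilon(t(g)),g,g)$: the unit section $\varepsilon(M)$ is Lagrangian, $\iota^{*}\omega=-\omega$ and $s\circ\iota=t$, and---the point on which everything hinges---$(\ker T_{g}s)^{\omega}=\ker T_{g}t$ for every $g\in G$, i.e.\ the source and target fibres are symplectic-orthogonal complements of one another.

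Next I would use this orthogonality to control Hamiltonian vector fields of pulled-back functions. For $f\in\mathcal{C}^{\infty}(M)$ the form $s^{*}df$ kills $\ker Ts$, so the Hamiltonian vector field $X_{s^{*}f}$ (defined by $\omega(X_{s^{*}f},\,\cdot\,)=s^{*}df$) lies in $(\ker Ts)^{\omega}=\ker Tt$ and hence is tangent to the $t$-fibres; symmetrically $X_{t^{*}h}$ is tangent to the $s$-fibres, and because $t$ is a submersion the vectors $\{X_{t^{*}h}(g):h\in\mathcal{C}^{\infty}(M)\}$ exhaust $\ker T_{g}s$. Two consequences follow at once: $\{s^{*}f,t^{*}h\}_{G}=(t^{*}dh)(X_{s^{*}f})=0$, since $X_{s^{*}f}\in\ker Tt$; and, by the Jacobi identity together with this vanishing, $X_{t^{*}h}\bigl(\{s^{*}f,s^{*}g\}_{G}\bigr)=\{t^{*}h,\{s^{*}f,s^{*}g\}_{G}\}_{G}=\{\{t^{*}h,s^{*}f\}_{G},s^{*}g\}_{G}+\{s^{*}f,\{t^{*}h,s^{*}g\}_{G}\}_{G}=0$. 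Thus $\{s^{*}f,s^{*}g\}_{G}$ is annihilated by a family of vector fields spanning $\ker Ts$ at every point, so it is locally constant along the fibres of $s$.

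I would then define $\{f,g\}_{M}$ by the requirement $s^{*}\{f,g\}_{M}=\{s^{*}f,s^{*}g\}_{G}$. Since $s$ is a surjective submersion this determines $\{f,g\}_{M}$ uniquely as soon as $\{s^{*}f,s^{*}g\}_{G}$ is genuinely $s$-basic, which holds after restricting to the source-connected component of $G$ (or, in general, by noting that left translations preserve $s$ and are symplectomorphisms between $t$-fibres modulo their null foliations). Injectivity of $s^{*}$ then promotes $\mathbb{R}$-bilinearity, antisymmetry, the Leibniz rule and the Jacobi identity from $\{\,\cdot\,,\,\cdot\,\}_{G}$ to $\{\,\cdot\,,\,\cdot\,\}_{M}$, so the resulting bivector $\Pi$ is Poisson and $s$ is a Poisson map by construction. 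Uniqueness is immediate: any $\Pi'$ making $s$ Poisson satisfies $s^{*}\{f,g\}_{\Pi'}=\{s^{*}f,s^{*}g\}_{G}=s^{*}\{f,g\}_{M}$, hence $\Pi'=\Pi$. For the target map, $t=s\circ\iota$ with $\iota^{*}\omega=-\omega$, so $\iota^{*}\{F,H\}_{G}=-\{\iota^{*}F,\iota^{*}H\}_{G}$ and therefore $\{t^{*}f,t^{*}g\}_{G}=\{\iota^{*}s^{*}f,\iota^{*}s^{*}g\}_{G}=-\iota^{*}\{s^{*}f,s^{*}g\}_{G}=-\iota^{*}s^{*}\{f,g\}_{M}=-t^{*}\{f,g\}_{M}$, i.e.\ $t$ is anti-Poisson.

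The main obstacle is really the single lemma $(\ker Ts)^{\omega}=\ker Tt$: it is the only place where multiplicativity of the symplectic form is used, and once it is in hand the descent of the bracket and all the Poisson axioms are formal. The one genuine subtlety downstream is upgrading ``locally constant along $s$-fibres'' to ``$s$-basic'', which is why the argument is cleanest for source-connected groupoids (or needs the auxiliary remark about left translations).
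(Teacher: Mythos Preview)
Your argument is correct and is essentially the paper's approach unpacked: the paper invokes Libermann's lemma, whose hypothesis (symplectic completeness of the $s$-fibre foliation) is exactly your key identity $(\ker Ts)^{\omega}=\ker Tt$, and whose conclusion (constancy of $\{s^{*}f,s^{*}g\}_{G}$ along $s$-fibres, hence descent to a Poisson bracket on $M$) is what you derive directly via the Jacobi identity and the spanning family $X_{t^{*}h}$. The only cosmetic difference is that the paper phrases the integrability step through Frobenius for the orthogonal distribution rather than through the Jacobi identity and the second submersion $t$.
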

One possible way to prove this theorem is by the use of what is known in the literature as \textit{Libermann's lemma}, that is stated in a slightly different formulation by Paulette Libermann in \cite{Libermann}, for the case of finite dimensional symplectic manifolds. 
Before stating the result, we need some definition that will be used in the sequel.
\begin{definition}
\emph{Let $(G, \omega)$ be a symplectic manifold and $\mathcal F$ a foliation on $G$. $\mathcal F$ is called a \emph{symplectically complete foliation} if the symplectically orthogonal distribution $(T \mathcal F)^{\perp}$ is an integrable distribution.
}
\end{definition} and equivalently, $\mathcal F$ is symplectically complete if there exists another foliation 
$\mathcal F^{'}$ such that $$T_x(\mathcal F)= (T_x(\mathcal F^{'}))^{\perp}.$$

After this definition, Libermann's lemma reads as follows

\begin{lemma}\label{Libermann} \emph{(Libermann). 
Let $\phi\colon  (G,\omega) \to M$ be a surjective submersion from a symplectic manifold $G$ to a manifold $M$ such that the fibers are connected. Denote by $(G,\mathcal F)$ the foliation on $G$ induced by the fibers of $\phi$.   Then, there exists a unique Poisson structure $\Pi$ on $M$ such that the map $\phi\colon  G \to M$ is a Poisson map if and only if the foliation $\mathcal F$ is symplectically complete.}
\end{lemma}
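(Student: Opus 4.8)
The plan is to construct the Poisson bracket on $M$ by the only formula that could possibly work and then to pin down exactly the obstruction to its being well defined. Since $\phi$ is a surjective submersion with connected fibres, a function $F\in C^\infty(G)$ equals $\phi^*\bar F$ for a (unique, smooth) $\bar F\in C^\infty(M)$ precisely when $F$ is constant along the fibres of $\phi$; call such $F$ \emph{basic}, and observe that $F$ is basic if and only if $\mathrm dF$ annihilates $T\mathcal F=\ker\mathrm d\phi$. Every $\phi^*f$ with $f\in C^\infty(M)$ is basic, so if $\Pi$ is any Poisson structure on $M$ for which $\phi$ is a Poisson map, then necessarily
\[
\phi^*\{f,g\}_M=\{\phi^*f,\phi^*g\}_G ,\qquad f,g\in C^\infty(M);
\]
since $\phi^*$ is injective this already determines $\Pi$, which proves uniqueness. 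It remains to show that this prescription defines a genuine Poisson bracket if and only if $\mathcal F$ is symplectically complete.

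Well-definedness of $\{f,g\}_M$ is exactly the assertion that $\{\phi^*f,\phi^*g\}_G$ is basic for all $f,g\in C^\infty(M)$. I would check this in a submersion chart $(x^1,\dots,x^n,y^1,\dots,y^k)$ in which $\phi(x,y)=y$, so that $T\mathcal F=\operatorname{span}\{\partial_{x^i}\}$, basic functions are those independent of the $x$'s, and, writing $\Pi_G$ for the Poisson bivector inverse to $\omega$ and $\Pi_G^{ab}:=\{y^a,y^b\}_G$, one has $\{\phi^*f,\phi^*g\}_G=\sum_{a,b}\Pi_G^{ab}\,(\partial_af)(\partial_bg)$. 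Hence the bracket is basic for all $f,g$ if and only if $\partial_{x^i}\Pi_G^{ab}=0$ for all $i,a,b$. On the other hand $(T\mathcal F)^{\perp}$ is the rank-$k$ distribution spanned by the Hamiltonian vector fields $X_{y^a}=\Pi_G^{\sharp}(\mathrm dy^a)$ — pointwise independent since $\omega$ is nondegenerate — and by the Frobenius theorem it is integrable if and only if $[X_{y^a},X_{y^b}]$ is a section of it for all $a,b$; since $[X_{y^a},X_{y^b}]=\pm X_{\Pi_G^{ab}}$ and, again by nondegeneracy of $\omega$, $X_h$ lies in $\operatorname{span}\{X_{y^c}\}$ exactly when $\mathrm dh$ lies in $\operatorname{span}\{\mathrm dy^c\}$, i.e. exactly when $\partial_{x^i}h=0$ for all $i$, integrability of $(T\mathcal F)^{\perp}$ is literally the same condition $\partial_{x^i}\Pi_G^{ab}=0$. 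This is the desired equivalence; and when it holds, $\{\phi^*f,\phi^*g\}_G$ is locally constant along the fibres, hence constant on the (connected) fibres, hence of the form $\phi^*\{f,g\}_M$ for a unique $\{f,g\}_M\in C^\infty(M)$.

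Finally, granting symplectic completeness, I would verify that $\{\cdot,\cdot\}_M$ really is a Poisson bracket (the converse — that the existence of a Poisson structure with $\phi$ Poisson forces symplectic completeness — is then immediate from uniqueness and the equivalence just established). Bilinearity, skew-symmetry and the Leibniz rule descend from $\{\cdot,\cdot\}_G$ because $\phi^*$ is an injective algebra homomorphism intertwining the two brackets, and the Jacobi identity follows from
\[
\phi^*\!\bigl(\{f,\{g,h\}_M\}_M+\{g,\{h,f\}_M\}_M+\{h,\{f,g\}_M\}_M\bigr)=\{\phi^*f,\{\phi^*g,\phi^*h\}_G\}_G+(\text{cyclic})=0
\]
together with the injectivity of $\phi^*$; that $\phi$ is then a Poisson map is the defining identity. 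The main obstacle is the middle paragraph: one has to recognize that ``the $x$-independence of the $yy$-block of $\Pi_G$'' is nothing other than the Frobenius integrability of $(T\mathcal F)^{\perp}$, the bridge being the identity $X_{\{u,v\}}=\pm[X_u,X_v]$ valid on any symplectic manifold; once that is seen, the remaining checks are routine.
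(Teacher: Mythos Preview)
Your proof is correct and follows essentially the same approach as the paper's sketch: both identify $(T\mathcal F)^\perp$ as the span of the Hamiltonian vector fields $X_{\phi^*f}$, invoke Frobenius together with $[X_{\phi^*f},X_{\phi^*g}]=\pm X_{\{\phi^*f,\phi^*g\}}$ to equate symplectic completeness with $\{\phi^*f,\phi^*g\}$ being basic, and then define $\{f,g\}_M$ by the unique descent. Your version is simply more thorough---working in a submersion chart, treating both directions of the biconditional explicitly, and verifying the Poisson axioms---whereas the paper gives only a sketch of the forward implication.
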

\begin{proof}
Here we present a sketch of the proof. It can be checked that the distribution $(T\mathcal F)^{\perp}$ is Hamiltonian, 
i.e. is generated by the vector fields $X_{\phi^*f}$, for which $\omega(X_{\phi^*f}, \cdot)= d \phi^*f$, 
with $f \in \mathcal C^{\infty}(M)$. The fact that such distribution is integrable is equivalent, due to Frobenius Theorem, to the fact
that $[X_{\phi^*f}, X_{\phi^*g}]$, with $f,g \in \mathcal C^{\infty}(M)$, is tangent to the distribution $(T\mathcal F)^{\perp}$. This means that $X_{\{\phi^*f, \phi^*g\}}$ is tangent to 
$(T\mathcal F)^{\perp}$, therefore the bracket $\{\phi^*f, \phi^*g \}$ is constant along the $\phi-$ fibers and this implies that 
there exists a function $h\in\mathcal C^{\infty}(M)$ such that $\{\phi^*f, \phi^*g\}= \phi^*h$. This functions defines uniquely the Poisson bracket on 
$M$ given by 
$$\{f,g\}_{\Pi}=h.$$
\end{proof}

By applying Lemma \ref{Libermann} to the case of a symplectic groupoid $G \rightrightarrows M$ with $\mathcal F$ being the foliation described by the 
distribution $\ker(ds)$, Theorem \ref{Poisson base} holds.
The generalization of this result in the case of regular relational symplectic groupoids is the following Theorem, 
now in terms of Dirac structures.
\begin{theorem}\label{Theo1Poisson1}\emph{ Let $(\mathcal G, L, I)$ be a regular relational symplectic groupoid, with $M= L_1/ L_2$.  Then, assuming that the $s$-fibers are connected, there exists a unique Poisson structure $\Pi$ on $M$ such that the map $s\colon  C\to M$ is a forward-Dirac map (or equivalently, $t\colon  C\to M$ is an anti forward-Dirac map).
}
\end{theorem}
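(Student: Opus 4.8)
The plan is to reduce Theorem~\ref{Theo1Poisson1} to Libermann's lemma (Lemma~\ref{Libermann}) applied, not to $\mathcal G$ itself, but to the finite-dimensional symplectic manifold $G_1 = C/L_2$ produced by Theorem~\ref{TheoSym} in the smooth case, and then to argue that the resulting Poisson structure is insensitive to the smoothness hypothesis, so that it can be described intrinsically as a forward-Dirac pushforward along $s\colon C\to M$. First I would invoke A.9(3): for every $f\in\mathcal C^\infty(M)$ the function $s^*f$ is Hamiltonian for $\omega|_C$, i.e.\ there is a vector field $X_{s^*f}$ on $C$ with $\iota_{X_{s^*f}}(\omega|_C) = d(s^*f)$. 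The span of these vector fields is a distribution $D\subset TC$, and the key geometric claim is that $D$ coincides (after passing to $C/L_2$, where $\omega|_C$ descends to a genuine symplectic form) with the symplectic orthogonal of $\ker d\underline s$. This is exactly the statement that the foliation by $s$-fibres on $G_1$ is symplectically complete, since the $X_{s^*f}$ are tangent to the $t$-fibres (by Corollary~\ref{source} and the cyclic/inversion symmetry, $t = s\circ \underline I$ on classes, so $s^*f$ Poisson-commutes with $t^*g$ pointwise), and the $t$-fibres and $s$-fibres are symplectically orthogonal in a symplectic groupoid --- this is the classical Libermann picture, and Theorem~\ref{TheoSym} tells us $G_1\rightrightarrows M$ is genuinely a symplectic groupoid once $G_1$ is smooth.

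Concretely the steps are: (i) use A.9 to realize $s$ and $t$ as surjective submersions $C\to M$ with connected $s$-fibres (connectedness is the standing hypothesis); (ii) show $\ker d\underline s$ and $\ker d\underline t$ are symplectically orthogonal complementary foliations on $G_1=C/L_2$, using that $m=p^3(L_3)$ is the groupoid multiplication and that the graph of multiplication is Lagrangian in $\overline{G_1}\times\overline{G_1}\times G_1$ (axiom A.4 pushed down), which by the standard computation forces $T(s\text{-fibre})^\perp = T(t\text{-fibre})$; (iii) apply Lemma~\ref{Libermann} to $\underline s\colon G_1\to M$ to get a unique Poisson $\Pi$ on $M$ making $\underline s$ Poisson and $\underline t$ anti-Poisson; (iv) pull everything back to $C$: since $p\colon C\to G_1$ is the reduction by the characteristic foliation of the coisotropic $C$ (Corollary after A.7, and Proposition~\ref{char} identifying $R^C$ with $L_2$ infinitesimally), the relation $s = \underline s\circ p$ exhibits $s$ as the composition of the Dirac map given by the inclusion $C\hookrightarrow\mathcal G$ (a coisotropic reduction, hence forward-Dirac) with the Poisson submersion $\underline s$, and the composition of forward-Dirac maps is forward-Dirac; (v) uniqueness descends from uniqueness in Lemma~\ref{Libermann} together with surjectivity of $p$ and of $\underline s$.

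The step I expect to be the main obstacle is (ii) combined with the passage in (iv) when $G_1$ is \emph{not} assumed smooth: the honest content of the theorem is that one need not assume $C/L_2$ is a manifold, so I cannot literally run Libermann's lemma downstairs. The fix is to do the whole argument at the level of the Dirac structure on $C$ directly: the pair $(C,\omega|_C)$ with the distribution $D=\operatorname{span}\{X_{s^*f}\}$ defines, via $\omega|_C$, a presymplectic form whose characteristic directions together with $D$ span a Dirac structure on $C$; one checks $D$ is involutive because $[X_{s^*f},X_{s^*g}] = X_{s^*\{f,g\}}$ for a well-defined bracket $\{f,g\}$ on $M$ --- and here the well-definedness (that $\{s^*f,s^*g\}_{\omega|_C}$ is constant along $s$-fibres) is precisely where one needs the symplectic-completeness input, now phrased as: $X_{s^*f}$ is tangent to the $t$-fibres and $t$-fibres are $L_2$-saturated unions whose $s$-images are points. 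So the real work is checking that $D$ and $\ker ds$ are $\omega|_C$-orthogonal modulo $TC^\perp$, which is the infinitesimal shadow of the Lagrangian condition on $L_3$ and is forced by axiom A.4 together with A.9(1)-(2); once that is in hand, forward-Dirac pushforward of $\omega|_C$ along the submersion $s$ is defined and agrees with $\Pi$, $t^*$ picks up the sign from $I$ being an antisymplectomorphism (axiom: $I$ anti-symplectic, and $t=s\circ\underline I$), and uniqueness is immediate because $s^*$ is injective on functions.
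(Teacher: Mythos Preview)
Your proposal is correct in substance, but it reaches the goal by a detour that the paper avoids. The paper does not pass through $G_1 = C/L_2$ at all; it works directly on $C$ from the outset by formulating and proving a \emph{presymplectic} version of Libermann's lemma: if $\omega$ is merely closed, $s,t$ are surjective submersions to $M$ with connected and mutually $\omega$-orthogonal fibres, and every $s^*f$, $t^*g$ is Hamiltonian, then $\{s^*f,s^*g\}:=\omega(X_{s^*f},X_{s^*g})$ is killed by every $Y\in\ker s_*$ and hence descends to a unique Poisson bracket on $M$ making $s$ forward-Dirac. This is then applied verbatim to $(C,\omega|_C)$ with $s,t$ from Corollary~\ref{source} and the Hamiltonian hypothesis supplied by A.9(3).

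Your ``fix'' in the last paragraph is precisely this argument, phrased in Dirac-structure language, so once you abandon the passage through $G_1$ you and the paper agree. What the paper's organisation buys is that the smoothness of $C/L_2$ never enters: the presymplectic lemma isolates the single geometric input (mutual $\omega|_C$-orthogonality of the $s$- and $t$-fibres) and runs the Libermann computation directly upstairs. Your steps (ii)--(iv), by contrast, require the symplectic groupoid $G_1$ to exist before you can pull back, which is exactly the hypothesis the theorem is meant to dispense with; you noticed this and your repair is the right one. One point worth flagging: neither you nor the paper spells out in detail why the $s$- and $t$-fibres are presymplectically orthogonal in $C$; you correctly locate the source in the Lagrangian condition on $L_3$, which is more than the paper says explicitly.
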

\begin{proof}
For this proof we present a more general version of Libermann's lemma for the case when $G$ is presymplectic. 
\begin{definition}\emph{ Let $M$ be a Banach manifold. A $2-$form $\omega \in \Omega^2(\mathcal G)$ is called \emph{presymplectic} if $d\omega=0$. In this case $M$ is called a presymplectic manifold. 
}
\end{definition}
\begin{definition}
\emph{ Let $M$ be a presymplectic manifold. A function $f \in \mathcal C^{\infty}(M)$ is called \emph{Hamiltonian} if there exists a vector field $X_f \in \mathfrak X(M)$ such that 
$$\iota_{X_f} \omega= df.$$
}
\end{definition}
Given this definition, we recall the following basic facts about presymplectic manifolds
\begin{enumerate}[1.]
\item If two functions  $f$ and $g$ are Hamiltonian, then the function
$$\{f,g\}_{M}:= \iota_{X_f}\iota_{X_g} \omega$$ is also Hamiltonian.
\item The bracket 
\begin{eqnarray*}
\{,\}: \mathcal C^{\infty}(M)\otimes \mathcal C^{\infty}(M) &\to& \mathcal C^{\infty}(M)\\
f\otimes g &\mapsto& \{f,g\}_M  
\end{eqnarray*}
is Poisson.
\item If $f$ and $g$ are Hamiltonian, the following equation holds
$$d \{f,g\}_{M}= \iota_{[X_f,X_g]} \omega.$$
\end{enumerate}
With this properties at hand, we are able to state the extension of Libermann's lemma in the case of presymplectic manifolds.
\begin{lemma}
\emph{(Libermann's lemma for presymplectic manifolds). Let $\mathcal G$ be a presymplectic manifold and 
$s, t : \mathcal G \to M$ be smooth surjective submersions of $\mathcal G$ onto a smooth manifold $M$ such that the fibers $s^{-1}(x)$ and $t^{-1}(x)$ are connected and mutually presymplectic orthogonal, for all $x \in M$. 
Assume that, for all $f \in \mathcal C^{\infty}(M)$ the functions $s^*f$ and $t^*g$ are Hamiltonian. Then, there exists a unique Poisson structure $\Pi$ on $M$ such that the map $s$ is a forward-Dirac map.}
\end{lemma} 
\begin{proof}
The idea of the proof is quite similar to the one for Lemma \ref{Libermann}. We will prove that $s^* \mathcal C^{\infty}(M)$ is a Poisson subalgebra of $\mathcal C^{\infty}(M)_{Ham}$, where $\mathcal C^{\infty}(M)_{Ham}$ denotes the Poisson algebra of Hamiltonian functions in $\mathcal C^{\infty}(M)$. \\
Since $s^*f$ and $s^*g$ are Hamiltonian, for all 
$f,g$ in $\mathcal C^{\infty}(M)$, then there exist vector fields $X_{s^*f}$ and $X_{s^*g}$ in $\Gamma(G)$ such that
$$\omega(X_{s^*f}, \cdot)= d s^*f$$
and 
$$\omega(X_{s^*g}, \cdot)= d s^*g.$$
The bracket between $f$ and $g$ is defined as follows.Denoting $\omega(X_{s^*f}, X_{s^*g})$ by $\{s^*f,s^*g\}_{\mathcal G}$, 
it follows that

$$Y\{s^*f, s^*g\}_{\mathcal G}=0,\, \forall Y \in \Gamma(\ker (s_*)),$$
This implies that $\{s^*f, s^*g\}_{\mathcal G}$ is constant along the $t$-fibers, hence, there exists a unique $h \in \mathcal C^{\infty}(M)$ 
such that $\{s^*f, s^*g\}_{\mathcal G}= t^*h$. We then define the Poisson bracket between $f$ and $g$ as
$$\{f,g\}_{\Pi}:=h.$$
The fact that $s$ is a forward-Dirac map with respect to $\{,\}_M$ is equivalent to the following equation
\begin{equation*}
\{t^*h, \{s^*f, s^*g \}\}_{G}=0
\end{equation*}
which holds since $\{,\}_G$ satisfies the Jacobi identity, and in a similar way as in Lemma \ref{Libermann}, 
it can be checked that $\{f,g\}_{\Pi}$ is then a Poisson bracket.
\end{proof}
We can apply this lemma for the case when $\mathcal G$ is the weak symplectic manifold 
of a regular relational symplectic groupoid, $M$ is the quotient $L_1/ L_2$ and $s$ and $t$ as defined in Corollary \ref{source}, 
and this finishes the proof of the Theorem.

\end{proof}


\end{itemize}
\subsection{Equivalence in the regular case}
It follows from Remark \ref{equiv} that, in the case where $\mathcal G$ and $\mathcal H$ are both regular, an equivalence $F$ induces relations 
$F_M\colon M_{\mathcal G} \nrightarrow M_{\mathcal H}$ and $F_M^{\dagger}\colon M_{\mathcal H} \nrightarrow M_{\mathcal G}$ satisfying
\begin{eqnarray*}
F_M^{\dagger}\circ F_M&=& \Id_{M_{\mathcal G}}\\ 
F_M\circ F_M^{\dagger}&=& \Id_{M_{\mathcal H}}.
\end{eqnarray*}
This implies the following
\begin{lemma}
\emph{The induced relation $F_M$ is the graph of a diffeomorphism between $M_{\mathcal G}$ and $M_{\mathcal{H}}$.}
\end{lemma}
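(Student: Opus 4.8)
The plan is to show that the relation $F_M \colon M_{\mathcal G} \nrightarrow M_{\mathcal H}$ is the graph of a map, that this map is a bijection, and that both it and its inverse are smooth, i.e.\ that $F_M$ is an immersed Lagrangian that happens to be a graph of a diffeomorphism. The two identities $F_M^\dagger \circ F_M = \Id_{M_{\mathcal G}}$ and $F_M \circ F_M^\dagger = \Id_{M_{\mathcal H}}$ already recorded above are the whole engine: they say that, as relations of sets, $F_M$ and $F_M^\dagger$ are mutually inverse, and a relation admitting a two-sided inverse relation is necessarily the graph of a bijection. I will spell this out in the first step.

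First I would argue the set-theoretic claim. Suppose $(x, y_1), (x, y_2) \in F_M$. Then $(y_i, x) \in F_M^\dagger$, so $(y_1, y_1) $ and $(y_1,y_2)$ both lie in $F_M \circ F_M^\dagger = \Id_{M_{\mathcal H}}$ once we also use $(x,y_2)\in F_M$; more directly, from $(y_1,x)\in F_M^\dagger$ and $(x,y_2)\in F_M$ we get $(y_1,y_2)\in F_M\circ F_M^\dagger=\Id_{M_{\mathcal H}}$, hence $y_1 = y_2$. Symmetrically, using $F_M^\dagger \circ F_M = \Id_{M_{\mathcal G}}$, the relation $F_M$ is injective as a partial map. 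Finally $F_M$ is defined everywhere on $M_{\mathcal G}$: for $x \in M_{\mathcal G}$, since $(x,x)\in \Id_{M_{\mathcal G}} = F_M^\dagger \circ F_M$ there is some $y\in M_{\mathcal H}$ with $(x,y)\in F_M$; and surjectivity onto $M_{\mathcal H}$ follows the same way from $F_M\circ F_M^\dagger = \Id_{M_{\mathcal H}}$. Thus there is a bijection $\varphi \colon M_{\mathcal G} \to M_{\mathcal H}$ with $F_M = \mathrm{graph}(\varphi)$ and $F_M^\dagger = \mathrm{graph}(\varphi^{-1})$.

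Next I would upgrade this to a diffeomorphism. Here I use that $F_M$ is an immersed Lagrangian submanifold of $M_{\mathcal G} \times \overline{M_{\mathcal H}}$ (inherited from $F$ being an equivalence of regular relational symplectic groupoids, via Remark~\ref{equiv} and Axiom~A.8 guaranteeing $M_{\mathcal G}, M_{\mathcal H}$ are finite-dimensional smooth manifolds). The immersion $\psi \colon F_M \to M_{\mathcal G}\times M_{\mathcal H}$ composed with the first projection is a smooth bijective map onto $M_{\mathcal G}$; to see it is a diffeomorphism I check it is an immersion of equal-dimensional manifolds, which forces it to be a local diffeomorphism, hence (being bijective) a global one. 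That $\dim F_M = \dim M_{\mathcal G}$ follows from the Lagrangian condition: $\dim F_M = \tfrac12(\dim M_{\mathcal G} + \dim M_{\mathcal H})$ together with $\dim M_{\mathcal G}=\dim M_{\mathcal H}$ — and the equality of dimensions is itself forced by the existence of the mutually inverse graphs $\varphi$, $\varphi^{-1}$ between smooth manifolds of a priori only topological bijection, which one promotes to smooth via the Lagrangian/immersion structure on both $F_M$ and $F_M^\dagger$. Symmetrically the second projection $F_M \to M_{\mathcal H}$ is a diffeomorphism, and $\varphi$ is the composition of one with the inverse of the other, hence smooth with smooth inverse.

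The main obstacle I anticipate is the smoothness/regularity bookkeeping rather than the set-theoretic core: one must be careful that $F_M$ is genuinely a (finite-dimensional, Hausdorff) smooth manifold and that the projections are submersions, since a priori $F_M$ is only an \emph{immersed} Lagrangian and could fail to be embedded. The clean way around this is to note that $F_M = \mathrm{graph}(\varphi)$ as a set is automatically embedded once $\varphi$ is continuous, and continuity of $\varphi$ follows because $F_M$, being a Lagrangian of the correct half-dimension projecting bijectively to $M_{\mathcal G}$, realizes $\varphi$ as a composite of the continuous (indeed smooth) structure maps; then the immersion hypothesis plus dimension count gives local diffeomorphism and we are done. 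I would present the argument in the order: (1) mutually-inverse relations $\Rightarrow$ $\varphi$ is a bijection; (2) $F_M$ Lagrangian and finite-dimensional $\Rightarrow$ dimension count and the projections are local diffeomorphisms; (3) conclude $\varphi \in \mathrm{Diff}(M_{\mathcal G}, M_{\mathcal H})$.
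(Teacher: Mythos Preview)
Your set-theoretic argument (step 1) is exactly right and is essentially all the paper offers: the two identities $F_M^\dagger\circ F_M=\Id_{M_{\mathcal G}}$ and $F_M\circ F_M^\dagger=\Id_{M_{\mathcal H}}$ force $F_M$ to be the graph of a bijection, and the paper simply records this by saying ``This implies the following'' before stating the Lemma, without further detail.

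There is, however, a genuine gap in your step 2. You argue that $F_M$ is an immersed Lagrangian in $M_{\mathcal G}\times\overline{M_{\mathcal H}}$ and then run a half-dimension count. But in this paper $M_{\mathcal G}=\underline{L_1}=L_1/L_2$ is only a finite-dimensional \emph{Poisson} manifold (see Axiom~A.8 and Theorem~\ref{Theo1Poisson1}); it is not symplectic, so there is no symplectic form on $M_{\mathcal G}\times\overline{M_{\mathcal H}}$ with respect to which $F_M$ could be Lagrangian, and the equality $\dim F_M=\tfrac12(\dim M_{\mathcal G}+\dim M_{\mathcal H})$ has no justification. You are conflating the Lagrangian property of $F\subset\overline{\mathcal G}\times\mathcal H$ (which lives in a genuine symplectic ambient) with the induced relation $F_M$ on the quotients $L_1/L_2$, which carry no symplectic structure.

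If you want to supply the smoothness that the paper leaves implicit, the honest route is to go back to how $F_M$ is produced: $F$ restricts to a relation $(L_1)_{\mathcal G}\nrightarrow (L_1)_{\mathcal H}$ compatible with the $L_2$-equivalences (Remark~\ref{equiv}), so $F_M$ is obtained by passing to the finite-dimensional quotients of Axiom~A.8. One then argues that this descent yields a smooth submanifold whose projections are submersions, using the regularity hypotheses (A.7--A.9) rather than any Lagrangian dimension count. Your overall outline (bijection first, then smoothness of the projections) is fine; only the mechanism you invoke for the second part needs to be replaced.
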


Since the following diagram commutes:
\[
\xymatrixrowsep{2pc} \xymatrixcolsep{2pc} \xymatrix{C_{\mathcal G} \ar@{->}[r]^{F} |-{/}\ar@{->}[d]_{s}&C_{\mathcal H}\ar@{->}[d]_{s}\\M_{\mathcal G}\ar@{->}[r]^{F_M} &
M_{\mathcal H}}
\]
from Theorem \ref{Theo1Poisson1} it follows  also the following
\begin{lemma}\emph{
The map $F_M$ is a Poisson diffeomorphism.}
\end{lemma}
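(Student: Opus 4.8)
The plan is to leverage the two inputs sitting right above this lemma: the preceding lemma, which already says that $F_M$ is (the graph of) a diffeomorphism $M_{\mathcal G}\to M_{\mathcal H}$, and Theorem~\ref{Theo1Poisson1}, which supplies the Poisson structures $\Pi_{\mathcal G}$ on $M_{\mathcal G}$ and $\Pi_{\mathcal H}$ on $M_{\mathcal H}$ \emph{together with a uniqueness clause}. So all that remains is to see that the diffeomorphism $F_M$ intertwines $\Pi_{\mathcal G}$ and $\Pi_{\mathcal H}$, and I would get this by a transport-and-uniqueness argument rather than a direct bracket computation. Concretely: let $\Pi':=(F_M)_*\Pi_{\mathcal G}$ be the push-forward Poisson structure on $M_{\mathcal H}$, so that by construction $F_M\colon(M_{\mathcal G},\Pi_{\mathcal G})\to(M_{\mathcal H},\Pi')$ is a Poisson diffeomorphism. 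It suffices to prove $\Pi'=\Pi_{\mathcal H}$, and by the uniqueness part of Theorem~\ref{Theo1Poisson1} this follows once we know that $s\colon C_{\mathcal H}\to(M_{\mathcal H},\Pi')$ is again a forward-Dirac map (the connectedness of the $s$-fibres is inherited).

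The second step is to recognize $s_{\mathcal H}$, with target $(M_{\mathcal H},\Pi')$, as a composition living on the $\mathcal G$-side. Since $F_M^{-1}\colon(M_{\mathcal H},\Pi')\to(M_{\mathcal G},\Pi_{\mathcal G})$ is Poisson, $s_{\mathcal H}\colon C_{\mathcal H}\to(M_{\mathcal H},\Pi')$ is forward-Dirac iff $F_M^{-1}\circ s_{\mathcal H}\colon C_{\mathcal H}\to(M_{\mathcal G},\Pi_{\mathcal G})$ is. Now the commuting square of the statement reads $s_{\mathcal H}\circ F=F_M\circ s_{\mathcal G}$ as relations $C_{\mathcal G}\nrightarrow M_{\mathcal H}$; combining it with $F\circ F^{\dagger}=(L_2)_{\mathcal H}$ from Remark~\ref{equiv}, which is reflexive on $C_{\mathcal H}$ by Corollary~\ref{equi} so that $F^{\dagger}$ has full domain $C_{\mathcal H}$, a short diagram chase shows that the relational composition $s_{\mathcal G}\circ F^{\dagger}\colon C_{\mathcal H}\nrightarrow M_{\mathcal G}$ is single-valued and coincides with $F_M^{-1}\circ s_{\mathcal H}$ (every $a$ with $(a,c')\in F$ has the same value $s_{\mathcal G}(a)=F_M^{-1}(s_{\mathcal H}(c'))$). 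So the problem is reduced to showing that $s_{\mathcal G}\circ F^{\dagger}$ is a forward-Dirac map.

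For the last step, $s_{\mathcal G}$ is forward-Dirac by Theorem~\ref{Theo1Poisson1}, and $F^{\dagger}\colon C_{\mathcal H}\nrightarrow C_{\mathcal G}$ is a forward-Dirac relation between the presymplectic manifolds $C_{\mathcal H}$ and $C_{\mathcal G}$: indeed $F$, being an immersed Lagrangian relation, lands in $C_{\mathcal G}\times C_{\mathcal H}$ (by the $L_2$-identities just used), and a Lagrangian relation pushes the Dirac structure of a presymplectic manifold to a Dirac structure — this is exactly the bracket/Hamiltonian compatibility already exploited in the presymplectic Libermann lemma used to prove Theorem~\ref{Theo1Poisson1}, and it uses the third condition of Axiom~A.9 to guarantee that the relevant Hamiltonian vector fields exist. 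Since forward-Dirac relations compose to forward-Dirac relations, $s_{\mathcal G}\circ F^{\dagger}$ is forward-Dirac, hence so is $s_{\mathcal H}$ into $(M_{\mathcal H},\Pi')$; uniqueness then gives $\Pi'=\Pi_{\mathcal H}$, i.e. $F_M$ is a Poisson diffeomorphism.

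I expect the genuine obstacle to be this last step, namely making the Dirac-geometric assertions precise and legitimate in the present setting: $C_{\mathcal G}$ and $C_{\mathcal H}$ are only \emph{presymplectic} and possibly infinite-dimensional Banach manifolds, $F$ is \emph{immersed} rather than embedded, and one must check that ``forward-Dirac map'' and ``forward-Dirac relation'' do compose cleanly here (equivalently, rerun the Libermann-type argument directly). A more hands-on variant, carrying the same analytic burden, avoids Dirac language altogether: since $s_{\mathcal G}^{*}$ is injective (surjective submersion), reduce the desired identity $\{F_M^{*}f,F_M^{*}g\}_{M_{\mathcal G}}=F_M^{*}\{f,g\}_{M_{\mathcal H}}$, via Theorem~\ref{Theo1Poisson1}, to a presymplectic-bracket identity on $C_{\mathcal G}$, pull it back along the commuting square, and invoke that the canonical relation $F$ intertwines the presymplectic brackets of $C_{\mathcal G}$ and $C_{\mathcal H}$.
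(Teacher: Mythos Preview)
Your approach is essentially the one the paper has in mind: the paper's entire argument for this lemma is the single sentence preceding it, namely that the square
\[
\xymatrix{C_{\mathcal G} \ar@{->}[r]^{F} |-{/}\ar@{->}[d]_{s}&C_{\mathcal H}\ar@{->}[d]_{s}\\M_{\mathcal G}\ar@{->}[r]^{F_M} & M_{\mathcal H}}
\]
commutes and that Theorem~\ref{Theo1Poisson1} (with its uniqueness clause) then forces $F_M$ to be Poisson. Your transport-and-uniqueness argument via $\Pi':=(F_M)_*\Pi_{\mathcal G}$ is exactly the way to unpack that sentence, and the subsequent Lemma~\ref{Poi} in the paper (``by a similar argument'') confirms that this is the intended mechanism.

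Where you go further is in the honest bookkeeping of the third step: the paper does not attempt to justify that forward-Dirac behaviour survives composition with the immersed Lagrangian relation $F^{\dagger}$ in the infinite-dimensional presymplectic setting, nor does it verify the cleanness needed for such a composition. Your concerns there are legitimate and are simply not addressed in the paper; they are swept into the phrase ``it follows''. So your proposal is not only aligned with the paper's approach but is in fact more careful than the paper itself, and the obstacle you flag is real (and unresolved in the source).
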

By a similar argument it can be easily checked that 
\begin{lemma}\label{Poi}
\emph{If $\mathcal{G}$ and $\mathcal{H}$ are two equivalent regular relational symplectic groupoids, and if we assume that $\mathcal G$ has coonected $s_{\mathcal G}$- fibers , then there exists a unique Poisson structure in $M_{\mathcal H}$ such that the map $s_{\mathcal H}$ is forward-Dirac.}
\end{lemma}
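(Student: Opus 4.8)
The plan is to obtain the Poisson structure on $M_{\mathcal H}$ by transporting the one on $M_{\mathcal G}$ along the diffeomorphism $F_M$, and then to read off the forward-Dirac property of $s_{\mathcal H}$ from the commuting square displayed above. Since $\mathcal G$ is a regular relational symplectic groupoid with connected $s_{\mathcal G}$-fibers, Theorem \ref{Theo1Poisson1} supplies the unique Poisson structure $\Pi_{\mathcal G}$ on $M_{\mathcal G}$ for which $s_{\mathcal G}\colon C_{\mathcal G}\to M_{\mathcal G}$ is forward-Dirac. By the two preceding lemmas the equivalence $F$ induces a Poisson diffeomorphism $F_M\colon M_{\mathcal G}\to M_{\mathcal H}$, so I would simply set $\Pi_{\mathcal H}:=(F_M)_*\Pi_{\mathcal G}$ and check the two assertions of the lemma for this choice.

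For the forward-Dirac property the key input is that $F$, restricted to the coisotropic submanifolds $C_{\mathcal G}$ and $C_{\mathcal H}$, intertwines their presymplectic geometries: this is what the identities $F^{\dagger}\circ F=(L_2)_{\mathcal G}$ and $F\circ F^{\dagger}=(L_2)_{\mathcal H}$ of Remark \ref{equiv} encode, since they exhibit $F$, after reduction by the characteristic foliations (i.e.\ by $L_2$), as the graph of a symplectomorphism between the reduced spaces, carrying Hamiltonian vector fields on $C_{\mathcal G}$ to Hamiltonian vector fields on $C_{\mathcal H}$. Combining this with the relation $s_{\mathcal H}\circ F=F_M\circ s_{\mathcal G}$, for any $f\in\mathcal C^\infty(M_{\mathcal H})$ the pullback $s_{\mathcal H}^*f$ corresponds under $F$ to $s_{\mathcal G}^*(F_M^*f)$, which is Hamiltonian on $C_{\mathcal G}$ by the last condition in A.9; hence $s_{\mathcal H}^*f$ is Hamiltonian on $C_{\mathcal H}$, and the presymplectic bracket $\{s_{\mathcal H}^*f,s_{\mathcal H}^*g\}_{\mathcal H}$ is $F$-related to $\{s_{\mathcal G}^*F_M^*f,s_{\mathcal G}^*F_M^*g\}_{\mathcal G}=s_{\mathcal G}^*\bigl(F_M^*\{f,g\}_{\Pi_{\mathcal H}}\bigr)$, the last equality because $F_M$ is Poisson. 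Transporting this back through the square yields $\{s_{\mathcal H}^*f,s_{\mathcal H}^*g\}_{\mathcal H}=s_{\mathcal H}^*\{f,g\}_{\Pi_{\mathcal H}}$, which is exactly the statement that $s_{\mathcal H}$ is forward-Dirac for $\Pi_{\mathcal H}$; equivalently, the source-fibre foliation on $C_{\mathcal H}$ inherits from its counterpart on $C_{\mathcal G}$ the completeness property needed to run the presymplectic Libermann lemma from the proof of Theorem \ref{Theo1Poisson1}.

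Uniqueness is then automatic: since $s_{\mathcal H}$ is a surjective submersion (Corollary \ref{source}), the pullback $s_{\mathcal H}^*$ is injective, so any Poisson structure making $s_{\mathcal H}$ forward-Dirac is forced to have bracket $\{f,g\}=h$ with $s_{\mathcal H}^*h=\{s_{\mathcal H}^*f,s_{\mathcal H}^*g\}_{\mathcal H}$, and this pins it down; no connectedness of the $s_{\mathcal H}$-fibers is needed since we are verifying, not constructing, the structure. I expect the only genuine work to be in making "$F$ intertwines the presymplectic geometries of $C_{\mathcal G}$ and $C_{\mathcal H}$" precise in the weak-symplectic Banach setting: checking that $F$ really does restrict to a relation $C_{\mathcal G}\nrightarrow C_{\mathcal H}$ compatible with $L_2$ on both sides (this is the content of Remark \ref{equiv}) and that it transports Hamiltonian data faithfully even though the reduced spaces need not be smooth manifolds. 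Everything else is a transcription of the argument already given for Theorem \ref{Theo1Poisson1}.
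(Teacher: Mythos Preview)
Your proposal is correct and is exactly the argument the paper has in mind; the paper itself offers no proof beyond the phrase ``by a similar argument it can be easily checked,'' and your plan---transport $\Pi_{\mathcal G}$ along the diffeomorphism $F_M$ and verify the forward-Dirac property of $s_{\mathcal H}$ through the commuting square---is that similar argument spelled out. One small expository point: citing the second preceding lemma (``$F_M$ is a Poisson diffeomorphism'') is premature, since it presupposes a Poisson structure on $M_{\mathcal H}$, but your very next move (defining $\Pi_{\mathcal H}:=(F_M)_*\Pi_{\mathcal G}$) resolves this and renders the citation superfluous rather than circular.
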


\section{Examples of regular relational simplectic groupoids}

\subsection {Symplectic groupoids} \label{groupo}
Given a symplectic groupoid $G$ over $M$, we can endow it naturally with a relational symplectic structure:
\begin{eqnarray*}
\mathcal{G}&=&G.\\
L&=&\{(g_1,g_2,g_3) \vert (g_1,g_2) \in G\times_{(s,t)}G,\, g_3^{-1}=\mu(g_1,g_2)\}.\\
I &=& g \mapsto \iota(g),\, g \in G.
\end{eqnarray*}
In this case, it is an easy check that the immersed canonical relations $L_i$ are given by
\begin{eqnarray*}
L_1&=& \varepsilon(M)\\
L_2&=& \triangle(G)\\
L_3&=& Gr(\mu),
\end{eqnarray*} 
and also we observe that in this case $(\mathcal G, L, I)$ is regular.
According to Theorem \ref{TheoSym}, given a regular relational symplectic groupoid $(\mathcal G, L,I)$ which admits a smooth symplectic reduction, we can associate a usual symplectic groupoid $G\rightrightarrows M$. By definition of such groupoid, we obtain the following
\begin{proposition}\label{pro}\emph{
The projection $p\colon \mathcal G \to G$ is an equivalence of relational symplectic groupoids.}
\end{proposition}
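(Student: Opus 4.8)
The plan is to make the relation $p$ fully explicit. Viewing $C\subset\mathcal G$ as the immersed coisotropic submanifold of axiom A.7 and $G=\underline C$ as its symplectic reduction, $p$ is nothing but
\[ p=\{(g,[g])\in\mathcal G\times G\mid g\in C\}, \]
that is, the canonical relation $P=I^{\dagger}$ of Proposition~\ref{projection} attached pointwise to $(T_g\mathcal G,T_gC)$. With this description, condition~1 in the definition of a morphism of relational symplectic groupoids is immediate: $p$ is an immersion (it is diffeomorphic to $C$, which is immersed in $\mathcal G$), and at each point its tangent space is the subspace $P$ of Proposition~\ref{projection}, hence Lagrangian in $T_g\mathcal G\oplus\overline{T_{[g]}G}$ (equivalently in $\overline{T_g\mathcal G}\oplus T_{[g]}G$).

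For condition~2, I would use that $C$ is the range of $L_2$ and that, by \eqref{symmetric}, $L_2$ is symmetric and commutes with $I$; hence $I=I_{\mathcal G}$ preserves $C$ and descends to the inversion $\iota=I_G$ constructed in Theorem~\ref{TheoSym}, so $p\circ I_{\mathcal G}=I_G\circ p$ holds by the very definition of $\iota$. For condition~3, $L_G\circ(p\times p)=p\circ L_{\mathcal G}$, I would first note that $L=L_{\mathcal G}$, and hence $L_3$, is contained in $C^3$: this follows from $L_2\circ L_3=L_3$ and $L_3\circ(L_2\times L_2)=L_3$ in \eqref{invariance3}, together with cyclicity (A.1) and $I(C)=C$. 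Once this is known, composing relations set-theoretically and using that the reduction $C\to G$ is surjective with fibres exactly the $L_2$-classes reduces condition~3 to two facts already contained in Theorem~\ref{TheoSym}: that $m=p^3(L_3)$ is a well-defined map on $G\times_M G$, and that $L_G$ differs from $L_3$ only by the inversion, which is $p$-compatible by the previous step. The first of these is where Corollary~\ref{source} and axiom~A.9 enter, since they guarantee that every pair of composable arrows of $G$ is the $p$-image of a pair $(g_1,g_2)\in C^2$ admitting some $g_3$ with $(g_1,g_2,g_3)\in L$.

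To upgrade ``morphism'' to ``equivalence'' I have to check that $p^{\dagger}$ is again a morphism. Conditions~1 and~2 for $p^{\dagger}$ come from applying $\dagger$ to the corresponding statements for $p$, using that the transpose of an immersed canonical relation is one and that $I^{\dagger}=I$ (by A.2). Condition~3 for $p^{\dagger}$ is the $\dagger$-transpose of condition~3 for $p$, and becomes a correct identity after rewriting $L_{\mathcal G}^{\dagger}$, $L_G^{\dagger}$ through $L_{\mathcal G}$, $L_G$ via cyclicity and $I$. Along the way, and to match Remark~\ref{equiv}, I would record the direct computations $p\circ p^{\dagger}=\triangle_G=(L_2)_G$ and $p^{\dagger}\circ p=\{(g,g')\in C\times C\mid[g]=[g']\}=L_2=(L_2)_{\mathcal G}$, the last equality being Corollary~\ref{equi}.

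The step I expect to be the real obstacle is the on-the-nose matching of domains in condition~3: one must know that $L$ (and $L_3$) do not leave $C^3$ and that composability in $G$ lifts to composability of representatives in $C$. Both are consequences of the regularity axioms A.7--A.9 and of the invariance relations \eqref{invariance1}--\eqref{invariance3}, but they are precisely where the argument uses something beyond formal manipulation of relations.
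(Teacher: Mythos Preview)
Your proposal is correct and follows essentially the same line as the paper: identify $p$ with the canonical relation $P=I^{\dagger}$ of Proposition~\ref{projection} (hence immersed Lagrangian), read off compatibility with $I$ and $L$ from the very definition of the groupoid structure on $G$ in Theorem~\ref{TheoSym}, and record the identities $p^{\dagger}\circ p=(L_2)_{\mathcal G}$ and $p\circ p^{\dagger}=\mathrm{Id}_G$. You in fact supply more detail than the paper's terse argument, in particular the observation that $L\subset C^3$ via \eqref{invariance3} and cyclicity.

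One small point of divergence: for showing that $p^{\dagger}$ satisfies condition~3, the paper does not transpose the morphism identity for $p$ and then ``rewrite $L^{\dagger}$ via cyclicity and $I$'' as you propose. That route is workable but needs care, since $(L_{rel})^{\dagger}$ is a relation $\bar{\mathcal G}\nrightarrow\mathcal G\times\mathcal G$ and not directly of the form required. The paper instead writes the $G$-structure explicitly as a push-forward, $I_G=p\circ I_{\mathcal G}\circ p^{\dagger}$ and $L_G=p\circ L_{\mathcal G}\circ(p^{\dagger}\times p^{\dagger})$, and then composes on the left with $p^{\dagger}$; the two identities $p^{\dagger}\circ p=L_2$ and \eqref{invariance3} finish the computation in one line. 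Since you already have both identities recorded, you could simply substitute that step; the rest of your argument matches the paper.
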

\begin{proof}
By Proposition \ref{projection}, $p$ is a canonical immersed relation $p\colon  \mathcal G \nrightarrow G$ and by definition of the groupoid structure on $G$ (see Theorem \ref{TheoSym}), it follows that $p$ commutes with $I$ and $L$ respectively, hence, $p$ is a morphism of relational symplectic groupoids.
The fact that $p^{\dagger}$ is also a morphism follows from the following facts. By definition, $I_G$ can be written as 
\begin{equation*}
I_G= p\circ I_{\mathcal G} \circ p^{\dagger}
\end{equation*}
and $L_G$ can be written as
\begin{equation*}
p\circ L \circ (p^{\dagger} \times p^{\dagger}).
\end{equation*}
Again, by proposition \ref{projection} we have that
\begin{equation} \label{Q1}
p^{\dagger} \circ p=(L_2)_{\mathcal G}
\end{equation}
and that
\begin{equation} \label{Q2}
p\circ p^{\dagger}= Id_G.
\end{equation}
Therefore, we get the following equalities
\begin{eqnarray}
p^{\dagger}\circ I_G&=& p^{\dagger}\circ p \circ I_{\mathcal G}\circ p^{\dagger}\\
&\stackrel{\ref{Q1}}{=}& L_2
\end{eqnarray}

\end{proof}
Another finite dimensional example is the following.
\subsection{Symplectic manifolds with a Lagrangian submanifold} Let $(G,\omega)$ be a symplectic manifold, $\phi$ an antisymplectomorphism and $\mathcal L$ an immersed 
Lagrangian submanifold of $G$ such that $\phi(\mathcal L)=\mathcal L$. We define
\begin{eqnarray*}
\mathcal{G}&=&G.\label{p}\\
L&=&\mathcal L \times \mathcal L\times \mathcal L.\label{q}\\
I&=& \phi\label{r}
\end{eqnarray*}
It is an easy check that this construction satisfies the relational axioms and that the spaces $L_i$ are given by 

\begin{eqnarray*}
L_1&=& \mathcal L \\
L_2&=& \mathcal L \times \mathcal L\\
L_3&=&\mathcal L\times  \mathcal L\times  \mathcal L.
\end{eqnarray*}

This example is a regular relational symplectic groupoid and furthermore we can prove the following
\begin{proposition} \emph{
The previous relational symplectic groupoid is equivalent to the zero dimensional symplectic groupoid (a point with zero symplectic structure and empty relations).}
\end{proposition}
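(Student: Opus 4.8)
The plan is to produce the equivalence explicitly, taking as target the relational symplectic groupoid $(\{*\},L_{\{*\}},I_{\{*\}})$ on the one‑point symplectic manifold with zero form, whose structure is necessarily trivial ($L_{\{*\}}=\{*\}\subset\{*\}^3$, $I_{\{*\}}=\mathrm{id}$, so that $(L_1)_{\{*\}}=(L_2)_{\{*\}}=(L_3)_{\{*\}}=\{*\}$); this is exactly the relational symplectic groupoid attached, as in Subsection \ref{groupo}, to the point groupoid $\{*\}\rightrightarrows\{*\}$. The candidate equivalence is $F:=\mathcal L$, viewed through the given immersion $\mathcal L\hookrightarrow\mathcal G=\mathcal G\times\overline{\{*\}}$ as an immersed canonical relation $\mathcal G\nrightarrow\{*\}$.

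First I would verify that $F$ is a morphism of relational symplectic groupoids. Property (1) is immediate, since $\mathcal L$ is by hypothesis an immersed Lagrangian submanifold of $\mathcal G$. For property (2): $I_{\{*\}}\circ F=F$ because $I_{\{*\}}=\mathrm{id}$, while, computing the composite of relations, $F\circ I_{\mathcal G}=F\circ\mathrm{graph}(\phi)=\{(g,*)\mid\phi(g)\in\mathcal L\}=\{(g,*)\mid g\in\phi^{-1}(\mathcal L)\}=F$, using that $\phi$ is a diffeomorphism with $\phi(\mathcal L)=\mathcal L$. For property (3): since $L_{\mathcal G}=\mathcal L\times\mathcal L\times\mathcal L$ and all the remaining factors are points, both $L_{\{*\}}\circ(F\times F)$ and $F\circ L_{\mathcal G}$, evaluated as composites of relations, equal the relation $\{((g_1,g_2),*)\mid g_1,g_2\in\mathcal L\}\colon\mathcal G\times\mathcal G\nrightarrow\{*\}$. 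Running the identical three checks with $F^{\dagger}=\mathcal L\subset\{*\}\times\overline{\mathcal G}$, regarded as a relation $\{*\}\nrightarrow\mathcal G$, shows that $F^{\dagger}$ is a morphism too: (1) holds again because $\mathcal L$ is Lagrangian, (2) again reduces to $\phi(\mathcal L)=\mathcal L$, and in (3) both $L_{\mathcal G}\circ(F^{\dagger}\times F^{\dagger})$ and $F^{\dagger}\circ L_{\{*\}}$ equal $\{(*,g)\mid g\in\mathcal L\}$. Hence $F$ is an equivalence. As a sanity check consistent with Remark \ref{equiv}, $F^{\dagger}\circ F=\mathcal L\times\mathcal L=(L_2)_{\mathcal G}$ and $F\circ F^{\dagger}=\{*\}=(L_2)_{\{*\}}$.

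An alternative, shorter route — arguably the natural one, since this example has just been declared regular — is to apply Theorem \ref{TheoSym} together with Proposition \ref{pro}: here $C=\mathcal L$ and $L_2=\mathcal L\times\mathcal L$, so $C/L_2=\mathcal L/(\mathcal L\times\mathcal L)$ is a single point (the relation $\mathcal L\times\mathcal L$ identifies all of $\mathcal L$), and likewise $M=L_1/L_2=\{*\}$; thus the symplectic groupoid produced by Theorem \ref{TheoSym} is $\{*\}\rightrightarrows\{*\}$, the zero‑dimensional one, and Proposition \ref{pro} asserts that the projection $p\colon\mathcal G\to\{*\}$ is an equivalence of relational symplectic groupoids. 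Under the identification of $p$ with the canonical relation $C=\mathcal L\to C/L_2$, this is precisely the $F$ constructed above.

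Neither approach conceals a real difficulty: the only thing to watch is the bookkeeping of relational compositions through the one‑point space, together with the mild (and implicit) nondegeneracy assumption $\mathcal L\neq\emptyset$, which is what makes $C/L_2$ genuinely a point and forces $F\circ F^{\dagger}=\{*\}$ rather than $\emptyset$. There is no analytic obstruction here, since $\mathcal L$ and its products are Lagrangian by hypothesis.
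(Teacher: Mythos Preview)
Your proposal is correct. Your ``alternative, shorter route'' via Theorem \ref{TheoSym} and Proposition \ref{pro} is exactly the paper's proof: the paper observes that $C=\mathcal L$, so its reduction is a point, and then invokes Proposition \ref{pro} to conclude that $\mathcal L$ (as the canonical projection) is an equivalence. Your first route --- directly checking that $F=\mathcal L$ and $F^{\dagger}$ satisfy the three morphism axioms --- is a more elementary, self-contained verification that bypasses the regularity machinery; it buys independence from Proposition \ref{pro} at the cost of a few lines of relational bookkeeping, while the paper's route illustrates the general principle that reduction to the associated symplectic groupoid always yields an equivalence.
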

\begin{proof}We prove that $\mathcal L$ is an equivalence from the zero dimensional manifold $p$ to $\mathcal{G}$ . This comes from the fact that, for this example, $C$ (as defined in Equation \ref{C}) is precisely $\mathcal L$, hence, its symplectic reduction is just a point. By Proposition \ref{pro} it follows that $\mathcal L$, being the canonical projection, is an equivalence.
\end{proof}
\begin{remark}\emph{More generally, following the definition of equivalence of relational symplectic groupoids and remark \ref{equiv}, we can check that $(\mathcal G,L,I)$ 
is equivalent to the zero dimensional symplectic groupoid if and only if there exists a Lagrangian submanifold $\mathcal L_{eq}$ of $\mathcal G$ satisfying the following two properties
\begin{itemize}
 \item $I\circ \mathcal L_{eq}=\mathcal L_{eq}$.
\item $\mathcal L_{eq}= L\circ (\mathcal L_{eq} \times \mathcal L_{eq})$. 
\end{itemize}
This implies that the only relational symplectic groupoids that are equivalent to the zero dimensional one are the ones described by Equations \ref{p}, \ref{q} and \ref{r}.}
 
\end{remark}

\subsection{Powers of symplectic groupoids}\label{power}
The following are two (a priori) different constructions of relational symplectic groupoid for the powers of a given symplectic groupoids. 
Let $G\rightrightarrows M$ be a symplectic groupoid and $(G,L,I)$ its associated relational symplectic groupoid as in Example \ref{groupo}. It is easy to check that
\begin{proposition}
\emph{
$(G^n, L^n, I^n)$ is a regular relational symplectic groupoid, for all $n\geq 1$.}
\end{proposition}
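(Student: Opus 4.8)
The plan is to verify the three axiom families A.1--A.9 for the triple $(G^n, L^n, I^n)$ by reducing everything to the $n=1$ case, which was already established in Example \ref{groupo}. The key observation is that all the defining data and all the derived relations $L_i$ split as Cartesian products: since $L^n := L \times \cdots \times L$ (sitting inside $(G^n)^3 \cong (G^3)^n$ after the obvious reshuffling of factors) and $I^n := I \times \cdots \times I$, composition of relations, transposition, and the diagonal all commute with taking $n$-fold Cartesian products. Concretely, I would first record the reshuffling isomorphism $\sigma_n \colon (G^n)^k \to (G^k)^n$ and note that under $\sigma_n$ one has $(L^n)_{rel} = \bigl((L_{rel})^n\bigr)$, $(I^n)_{rel} = \bigl((I_{rel})^n\bigr)$, etc., so that $(L_i)_{G^n} = \bigl((L_i)_G\bigr)^n$ for $i=1,2,3$.

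With that bookkeeping lemma in hand, each axiom follows formally. A Cartesian product of immersed submanifolds is an immersed submanifold, so the manifold/immersion conditions in A.3--A.9 hold; a product of Lagrangian subspaces in a product of (weak) symplectic spaces is Lagrangian, so the Lagrangian conditions hold; a product of the $n$ equalities that constitute each of A.1--A.6 for $(G,L,I)$ is exactly the corresponding equality for $(G^n, L^n, I^n)$. For the regularity axioms A.7--A.9, note that $C_{G^n} = (C_G)^n$, that $L_2$ has finite codimension in $C\times C$ iff its $n$-th power does (the codimension simply multiplies by $n$), that $M_{G^n} = (L_1/L_2)^{\times n} = M_G^n$ is finite-dimensional, and that $S_{G^n} = (S_G)^n$ with $s_{G^n} = (s_G)^n$, $t_{G^n} = (t_G)^n$ still surjective submersions; the Hamiltonian condition A.9(3) is preserved because $(s_{G^n})^* f$ for $f$ depending on the $j$-th factor is pulled back from the $j$-th copy of $C_G$, where $s_G^* (\cdot)$ is Hamiltonian. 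Since Example \ref{groupo} already checked all nine axioms for $(G,L,I)$, we conclude $(G^n, L^n, I^n)$ is a regular relational symplectic groupoid.

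The only genuinely non-formal point—and hence the step I expect to require the most care—is the identification $(L_i)_{G^n} = \bigl((L_i)_G\bigr)^n$ for $i = 1, 2, 3$, i.e.\ checking that forming the relations $L_1, L_2, L_3$ out of $L$ and $I$ via the composition recipes in A.3--A.6 really does commute with the $n$-fold product, after conjugating by the reshuffle $\sigma_n$. This is where one must be scrupulous that the set-theoretic composition of relations distributes over Cartesian products: given relations $R \colon A \nrightarrow B$, $R' \colon B \nrightarrow C$, one has $(R')^n \circ R^n = (R' \circ R)^n$ as subsets of $A^n \times C^n$ once both sides are viewed inside $(A\times C)^n$, and similarly transposition satisfies $(R^n)^\dagger = (R^\dagger)^n$. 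Granting this—it is a routine but slightly tedious unwinding of the index bookkeeping—the verification of the axioms is purely a matter of raising known identities to the $n$-th power, which is why the proposition is flagged as ``easy to check.''
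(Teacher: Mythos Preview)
Your proposal is correct and matches the paper's (implicit) approach: the paper offers no proof beyond the phrase ``it is easy to check,'' and the natural check is precisely the one you outline---namely, that all the structure maps and derived relations $L_i$ for $(G^n,L^n,I^n)$ are $n$-fold Cartesian products of those for $(G,L,I)$, so each axiom follows by raising the $n=1$ case to the $n$-th power. One minor simplification: since $G$ here is an honest finite-dimensional symplectic groupoid, $C_{G^n}=G^n$ is itself symplectic (because $L_2=\triangle(G)$ gives $C=G$), so the Hamiltonian condition A.9(3) is automatic and you need not argue factor by factor.
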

In this case the base Poisson manifold is $M^n$, with $M$ the base Poisson manifold of $G$. 
Now, let us denote $G_{(1)}=G$, $G_{(2)}$ the fiber product $G\times_{(s,t)}G$, $G_{(3)}=G \times_{(s,t)}(G \times_{(s,t)}G)$ and so on. We will use the following
\begin{lemma} \emph{\cite{Xu}. Let $G\rightrightarrows M$ be a symplectic groupoid.
\begin{enumerate}[1.]
 \item $G_{(n)}$ is a coisotropic submanifold of $G^n$.
\item The reduced spaces $\underline{G_{(n)}}$ are canonically symplectomorphic to $G$ (by the first component projection). Furthermore, there exists a natural symplectic groupoid structure on $\underline{G_{(n)}}\rightrightarrows M$ coming from the symplectic quotient, isomorphic to the symplectic groupoid structure on $G\rightrightarrows M$. 
\end{enumerate}
}
\end{lemma}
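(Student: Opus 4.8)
The plan is to derive both statements from a single input, the multiplicativity of $\omega$, together with one elementary observation about kernels of pullback forms. Write $m_n\colon G_{(n)}\to G$ for the iterated multiplication $(g_1,\dots,g_n)\mapsto g_1g_2\cdots g_n$; it is a surjective submersion (already for $n=2$, since it restricts to a diffeomorphism on the slice $\{(g,1_{s(g)})\}\subset G_{(2)}$, and the general case follows by induction). Let $\Omega_n$ denote the restriction to $G_{(n)}$ of the product form $\sum_{i=1}^n\mathrm{pr}_i^*\omega$ on $G^n$. The key assertion is that $\Omega_n=m_n^*\omega$. For $n=2$ this is exactly multiplicativity of $\omega$; for $n>2$ one factors $m_n$ through $\mathrm{id}\times m_{n-1}$ and applies the $n=2$ case, being careful that the tuples involved are composable. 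The induction is routine.

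Given this, part (1) is immediate. At a point of $G_{(n)}$ the kernel of $\Omega_n$ is $\{v\in TG_{(n)}:\omega(dm_n(v),dm_n(w))=0\ \forall\,w\in TG_{(n)}\}$; since $dm_n$ is fibrewise onto $TG$ and $\omega$ is nondegenerate, this forces $dm_n(v)=0$, so $\ker\Omega_n=\ker dm_n$. Its dimension is $\dim G_{(n)}-\dim G=(n+1)\dim M-2\dim M=(n-1)\dim M$, which equals the codimension of $G_{(n)}$ in $G^n$. Since the kernel of the restricted form always has dimension at most the codimension, with equality exactly when the submanifold is coisotropic, $G_{(n)}$ is coisotropic; moreover its characteristic distribution is precisely $\ker dm_n$, so the characteristic foliation of $G_{(n)}$ is the foliation by the fibres of $m_n$ (connected when $G$ is source-connected).

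For part (2), the reduced space $\underline{G_{(n)}}$ is the leaf space of this foliation, and $m_n$, being a surjective submersion constant on leaves, identifies it canonically and diffeomorphically with $G$; after the change of coordinates $(g_1,\dots,g_n)\mapsto(g_1\cdots g_n,\ g_2\cdots g_n,\ \dots,\ g_n)$ this identification is the projection to the first component, as stated. Writing $p\colon G_{(n)}\to\underline{G_{(n)}}$ for the quotient and $\Phi\colon\underline{G_{(n)}}\to G$ for the induced map, one has $p^*\omega_{\mathrm{red}}=\Omega_n=m_n^*\omega=p^*\Phi^*\omega$, and since $p$ is a surjective submersion, $\Phi^*\omega=\omega_{\mathrm{red}}$; thus $\Phi$ is a symplectomorphism. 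Finally one checks that the structure maps of $G$ descend from $G^n$: the maps $(g_1,\dots,g_n)\mapsto s(g_n)$ and $(g_1,\dots,g_n)\mapsto t(g_1)$ are constant on $m_n$-fibres (they equal $s\circ m_n$ and $t\circ m_n$) and hence define $\underline s,\underline t\colon\underline{G_{(n)}}\to M$; multiplication is induced by concatenating representatives and reducing; the unit is $x\mapsto[(1_x,\dots,1_x)]$ and the inverse reverses and inverts componentwise. One verifies that these satisfy the groupoid axioms and are intertwined by $\Phi$ with the structure maps of $G$, so that $\underline{G_{(n)}}\rightrightarrows M$ is a symplectic groupoid isomorphic to $G\rightrightarrows M$.

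The genuinely substantive point is the identity $\Omega_n=m_n^*\omega$, which simply repackages multiplicativity; everything after it is bookkeeping. The only real care is needed in (i) tracking the composability conditions in the inductive step, and (ii) making the phrase ``coming from the symplectic quotient'' precise, i.e. exhibiting the descended groupoid operations as reductions of operations defined on nested fibre products of copies of $G^n$ \emph{before} reduction, rather than merely transporting the structure of $G$ through $\Phi$. Both are straightforward, and the detailed verification is the one in \cite{Xu}.
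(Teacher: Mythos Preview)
Your argument is correct and complete. The identity $\Omega_n=m_n^*\omega$ is indeed the whole story: it is multiplicativity for $n=2$, the induction via $m_n=m_2\circ(\mathrm{id}\times m_{n-1})$ is clean, and from it both the coisotropy of $G_{(n)}$ (via the dimension count $\dim\ker\Omega_n=(n-1)\dim M=\mathrm{codim}\,G_{(n)}$) and the identification $\underline{G_{(n)}}\cong G$ follow immediately. Your parenthetical about source-connectedness is the one genuine hypothesis hiding in the statement: the fibres of $m_n$ are diffeomorphic to products of source fibres, so the leaf space of the characteristic foliation agrees with the image of $m_n$ only when those fibres are connected.

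As for comparison with the paper: there is nothing to compare. The paper does not prove this lemma; it quotes it from \cite{Xu} and uses it as a black box to build the relational symplectic groupoid $(G^{(n)},L^{(n)},I^{(n)})$. Your write-up is therefore strictly more than what the paper provides, and is essentially the argument one finds in Xu's note. The only cosmetic difference is the interpretation of ``first component projection'': you recover it after the shear $(g_1,\dots,g_n)\mapsto(g_1\cdots g_n,g_2\cdots g_n,\dots,g_n)$, which is the right way to make the paper's phrasing literal.
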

Having this lemma at hand and considering the canonical relations 
$$p_n: G^n \nrightarrow \underline{G_{(n)}}\equiv G,$$ we define the regular relational symplectic groupoid $(G^{(n)}, L^{(n)}, I^{(n)})$, given by 
\begin{eqnarray*}
G^{(n)}&:=& G^n\\
I^{(n)}&:=& p_n^{\dagger} \circ I \circ p_n\\
L^{(n)}&:=& p^{\dagger} \circ L \circ (p \times p),
\end{eqnarray*}
where for each $n$, $(G^{(n)}, L^{(n)}, I^{(n)})$ is equivalent, as relational symplectic groupoid, to $(G, L,I)$.

\section{RSG and integration of Poisson manifolds}
\subsection{Symplectic groupoids as the phase space of the PSM}
We introduce the Poisson sigma model
associated to a Poisson manifold
(the classical version of the model through the Hamiltonian
formalism). After the construction of the reduced phase space of the PSM
associated to integrable Poisson manifolds we generalize
the construction to the non reduced version by defining the
\emph{relational symplectic groupoid}.
We give examples and we concentrate our attention on the examples
coming from Poisson geometry.

\begin{definition} 
\emph{
A Poisson sigma model (PSM) corresponds to the following data:
\begin{enumerate}[1.]
\item A compact surface $\Sigma$, possibly with boundary, called the \textit{source}.
\item A finite dimensional Poisson manifold $(M,\Pi)$, called the \textit{target}.
\end{enumerate}
}



\end{definition}
The space of fields for this theory is denoted with $\Phi$ and corresponds to the space of vector bundle morphisms of class $\mathcal C^{k+1}$ between $T\Sigma$ and $T^*M$.
This space can be parametrized by a pair $(X, \eta)$, where $X\in \mathcal C^{k+1}(\Sigma, M)$  and $\eta \in 
\Gamma^k(\Sigma, T^*\Sigma \otimes X^*T^*M),$ and $k \in \{0,\,1,\, \cdots \, , \infty\}$ denotes the regularity type of the map that we choose to work with.\\
On $\Phi$, the following first order action  is defined:
\[S(X,\eta):= \int_{\Sigma} \langle \eta,\, dX\rangle+  \frac 1 2 \langle \eta, \, (\Pi^{\#}\circ X) \eta \rangle,\]
where,

 \begin{eqnarray*}
 \Pi^{\#}\colon T^*M&\to& TM\\
  \psi &\mapsto& \Pi(\psi, \cdot).
 \end{eqnarray*}
 

Here,  $dX$ and $\eta$ are regarded as elements in $\Omega^1(\Sigma, X^*(TM))$, $\Omega^1(\Sigma, X^*(T^*M))$, respectively and  $\langle \,,\, \rangle $ is the pairing between $\Omega^1(\Sigma,  X^*(TM))$ and $\Omega^1(\Sigma,  X^*(T^*M))$ induced by the 
natural pairing between $T_xM$ and $T_x^*M$, for all $x \in M$.\\
\begin{remark}\emph{
This model has significant importance for deformation quantization. Namely, the perturbative expansion of the Feynman path integral 
for the PSM, in the case that $\Sigma$ is a disc, gives rise to Kontsevich's star product formula 
\cite{CattaneoDeformation, Cat, Kontsevich1}, i.e. the semiclassical expansion of the path integral
\begin{equation*}
\int_{X(r)=x}f(X(p)) g (X(q))\exp{(\frac{i}{\hbar} S(X,\eta)) dX d\eta}
\end{equation*}
around the critical point $X(u)\equiv x, \eta \equiv 0$, where $p,q$ and $r$ 
are three distinct points of $\partial \Sigma$, corresponds to the star product $f\star g (x)$.}
\end{remark}

\section{The PSM and its phase space}
For this model, we consider the constraint equations and the space of gauge symmetries. These will allow us to understand the geometry of the phase space and its reduction. First, we define
\[\mbox{EL}_{\Sigma}=\{\mbox{Solutions of the Euler-Lagrange equations}\}\subset \Phi,\]
where, using integration by parts \[\delta S= \int_{\Sigma} \frac{\delta \mathcal{L}}{\delta X} \delta X+ \frac{\delta \mathcal{L}}{\delta \eta} \delta \eta + \mbox{boundary terms}.\]The partial variations correspond to:
\begin{eqnarray}
\frac{\delta \mathcal{L}}{\delta X}&=& dX+ \Pi^{\#}(X)\eta=0\\
\frac{\delta \mathcal{L}}{\delta \eta}&=& d\eta+ \frac 1 2 \partial \Pi^{\#}(X)\eta\wedge\eta=0.
\end{eqnarray}

Now, if we restrict to the boundary, the general space of boundary fields corresponds to

\[\Phi_{\partial}:=\{\mbox{vector bundle morphisms between  } T (\partial \Sigma) \mbox{  and  } T^*M\}.\]
 
Following \cite{Cat, AlbertoPavel1}, $\Phi_{\partial}$ is endowed with a weak symplectic form and a surjective submersion $p\colon  \Phi \to \Phi_{\partial}$. 
Explicitly we have the following description of the space of boundary fields:
 \begin{remark} \emph{ For $k < \infty$, $\Phi_{\partial}$ can be identified with the Banach manifold $$P(T^*M):= \mathcal C^{k+1}(I, T^*M),$$ therefore it is a weak symplectic Banach manifold locally modeled by $\mathcal C^{k+1}(I, \mathbb R ^{2n})$.  
 } 
 \end{remark}
In order to see this, we understand $\Phi_{\partial}$ as a \emph{fiber bundle} over the path space $PM$, which is naturally equipped with the topology of uniform convergence. The fibers of the bundle are isomorphic to the Banach space of class $\mathcal C^{k}$
\begin{equation*}
T^*_X(PM):= \Omega^1(I, X^*(T^*M))
\end{equation*}
Therefore, as a set, $\Phi_{\partial}$ corresponds to
\begin{equation*}
\Phi_{\partial}= \bigcup_{(X \in PM)} T^*_X(PM).
\end{equation*}
The identification with $P(T^*M)$ is explicitly given by
\begin{eqnarray*}
\psi\colon  T^*(PM) &\to& P(T^*M)\\
     (X,\eta) &\mapsto& (\gamma\colon  t \mapsto (X(t),\eta(t)))
\end{eqnarray*}
and this allows to define a 2-form $\omega$ in $\phi_{\partial}$ in the following way. 
Identifying the tangent space $T_{\gamma} (P(T^*(M)))$ with the space of vector fields along the curve $\gamma$
\begin{equation*}
T_{\gamma} (P(T^*(M)))= \{\delta\gamma: I \to TT^*M \mid \delta \gamma(t) \in T_{\gamma(t)}T^*M \}
\end{equation*}
the 2-form $\omega$ in $\phi_{\partial}$ is given by
\begin{equation*}
\omega_{\gamma}(\delta_1\gamma, \delta_2\gamma)= \int_0^1 \omega^{Liouv}(\delta_1\gamma(t),\delta_2 \gamma(t)) dt,
\end{equation*}
where $\omega^{Liouv}= d \alpha^{Liouv}$ is the canonical symplectic form on $T^*M$. In local coordinates, if  $\gamma$ is 
described by the functions $X^1(t), X^2(t), \cdots X^n(t) \in \mathcal C^{k+1}(I)$ and $\eta_1,\eta_2,\cdots \eta_n \in \Omega^1(I)$ of class $\mathcal C^{k}$, 
then $\omega$ is given by
\begin{equation}\label{symplectic}
\omega_{\gamma}(\delta_1\gamma, \delta_2\gamma)= \int_0^1(\delta_1X^i(t)\delta_2\eta_i(t)-\delta_2X^i(t)\delta_1\eta_i(t))dt.
\end{equation}
This form is clearly closed since
\begin{equation*}
d\omega_{\gamma}(\delta_1\gamma, \delta_2\gamma,\delta_3\gamma)= \int_0^1 d\omega^{Liouv}(\delta_1\gamma(t),\delta_2 \gamma(t), \delta_3\gamma(t)) dt=0.
\end{equation*}
It is weak symplectic since, if $\omega^{\sharp}(\delta_1 \gamma)=\omega^{\sharp}(\delta_1^{'} \gamma)$, 
then, we can set $$\delta_1\eta_i\equiv 0, \forall 1 \leq i \leq n ,$$ in this case
\begin{equation*}
\int_0^1 (\delta_1X^i(t)-\delta_1(X^i)^{'}(t))\delta_2\eta_i(t)=0, \forall \delta_2\eta_i(t),
\end{equation*}
which implies that $$\delta_1X^i(t)=\delta_1(X^i)^{'}(t).$$ 
If we set now $$\delta_2\eta_i\equiv 0, \forall 1 \leq i \leq n,$$ 
we can conclude in a similar way that $\delta_1\eta_i(t)=\delta_1(\eta_i)^{'}(t).$

Now, we define, following \cite{AlbertoPavel1}
$$L_{\Sigma}:= p(EL_{\Sigma}).$$

Finally, we define $C_{\partial}$ as the set of fields in $\Phi_{\partial}$ which  can be completed to a field in  $L_{\Sigma^{'}}$, with $\Sigma^{'}:= \partial \Sigma \times [0, \varepsilon]$, for some $\varepsilon$.  \\
 It can be proven that 
 \begin{proposition}\label{Coiso} \emph{\cite{Cat}.
 \begin{enumerate}[1.]
 \item The space $C_{\partial}$ is described by
 \begin{equation}\label{Cois}C_{\partial}= \{(X,\eta)\vert dX= \pi^{\#}(X)\eta,\, X\colon  \partial \Sigma \to M, \, \eta \in \Gamma (T^*I \otimes X^*(T^*M))\}.\end{equation}
\item The space $C_{\partial}$ is a coisotropic Banach submanifold of $\Phi_{\partial}$ and its associated characteristic foliation has codimension $2n$, where $n = \emph{dim} (M)$.
\end{enumerate}
}
\end{proposition}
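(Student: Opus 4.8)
The plan is to prove the two parts of Proposition \ref{Coiso} by working directly with the Euler–Lagrange equations of the PSM restricted to the collar $\Sigma' = \partial\Sigma \times [0,\varepsilon]$, pushed forward to the boundary by $p$. For part (1), I would first observe that a boundary field $(X,\eta)$ lies in $C_\partial$ precisely when it extends to a solution of the EL equations on the collar. Writing the collar coordinate as $(u,\tau)$ with $u \in \partial\Sigma$ and $\tau \in [0,\varepsilon]$, and splitting $\eta = \eta_u\, du + \eta_\tau\, d\tau$, the EL equation $dX + \Pi^{\#}(X)\eta = 0$ decomposes into a $\tau$-component (an evolution equation in $\tau$ that can always be solved for short time by the standard ODE/flow argument, given any initial data) and a $u$-component, which is exactly the constraint $\partial_u X = \Pi^{\#}(X)\eta_u$ that must hold on the boundary itself. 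The second EL equation similarly splits, its $\tau$-part being solvable by evolution and its $u$-part being a consequence of the first via the Poisson (Jacobi) identity. So the only obstruction to extending is the boundary constraint, giving exactly the description \eqref{Cois}; I would cite \cite{Cat, AlbertoPavel1} for the details of this collar-extension argument.

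For part (2), the coisotropic claim: I would show $C_\partial$ is a Banach submanifold by exhibiting \eqref{Cois} as the zero set of a submersion — the map $(X,\eta) \mapsto dX - \Pi^{\#}(X)\eta$ into $\Omega^1(I, X^*T^*M)$ (or rather a suitable trivialization thereof), whose differential is surjective because the $dX$ term alone already surjects onto the relevant space of one-forms. To see coisotropy, I would compute $(T_{(X,\eta)}C_\partial)^{\perp}$ with respect to the weak symplectic form $\omega$ of \eqref{symplectic} and check it is contained in $T_{(X,\eta)}C_\partial$; concretely, the symplectic orthogonal is spanned by the Hamiltonian vector fields of the constraint functionals, and one verifies these are tangent to $C_\partial$ because the constraints are first class — their mutual brackets vanish on $C_\partial$, again by the Jacobi identity for $\Pi$. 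The codimension-$2n$ statement for the characteristic foliation comes from identifying $(TC_\partial)^{\perp}$ as the image of the linearized gauge transformations, which are parametrized pointwise on $I$ by sections but whose "net effect" transverse to $C_\partial$ is governed only by the endpoint behavior: the gauge symmetries act trivially except through $2n$ parameters (the values at the two endpoints of $I$), so $(TC_\partial)^{\perp}$ has codimension $2n$ inside $TC_\partial$.

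The main obstacle I expect is the careful functional-analytic bookkeeping in infinite dimensions: because $\omega$ is only \emph{weak} symplectic, one cannot freely identify $TC_\partial$ with its annihilator or dualize, and $(TC_\partial)^{\perp}$ need not be a topological complement; establishing that $C_\partial$ is a genuine Banach submanifold (clean intersection, closed split tangent spaces in the $\mathcal C^k$/$\mathcal C^{k+1}$ scale) and that the characteristic distribution has constant, finite codimension $2n$ requires the kind of estimates carried out in \cite{Cat}. In keeping with the style of the paper, I would present the algebraic heart of the argument — the first-class nature of the constraints and the resulting coisotropy — in detail, and for the hard analytic points refer to the PSM literature, since Proposition \ref{Coiso} is explicitly attributed to \cite{Cat}.
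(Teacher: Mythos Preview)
The paper does not give its own proof of this proposition: it is stated with the attribution \cite{Cat} and the text immediately moves on to the converse (Proposition~\ref{Coi}). There is nothing to compare against here beyond the citation itself, and your closing remark --- that you would present the algebraic core and defer the analytic estimates to \cite{Cat} --- already matches the paper's treatment exactly.

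That said, your sketch is essentially the argument one finds in \cite{Cat}: the collar splitting of the Euler--Lagrange equations into a $\tau$-evolution and a $u$-constraint for part~(1), and the first-class nature of the constraints for coisotropy in part~(2). One point in your explanation is inverted, though. You write that ``the gauge symmetries act trivially except through $2n$ parameters (the values at the two endpoints of $I$)''. It is the other way around: the gauge parameters $\beta$ are required to \emph{vanish} at the endpoints (this is the boundary condition $\beta(0)=\beta(1)=0$ recalled later in the paper), so the gauge action \emph{preserves} the endpoint values $X(0),X(1)\in M$. These $2n$ numbers are the transverse invariants that survive to the leaf space, and that is why the characteristic foliation has codimension $2n$ inside $C_\partial$ --- not because the gauge transformations carry $2n$ effective parameters.
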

In fact, the converse of the second property also holds in the following sense. If we define $S(X,\eta)$ and $C_{\partial}$ 
in the same way as before, it can be proven that

\begin{proposition}\label{Coi}\emph{\cite{Coiso}.
If $C_{\partial}$ is a coisotropic submanifold of $\Phi_{\partial}$, then $\pi$ is a Poisson bivector field.}
\end{proposition}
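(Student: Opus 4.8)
The plan is to show the contrapositive in spirit: assume $C_\partial$ is coisotropic and deduce that the bilinear form $\pi$ on $T^*M$ is skew and satisfies the Jacobi identity (equivalently, the Schouten bracket $[\pi,\pi]$ vanishes), so that $\pi$ is a genuine Poisson bivector. The starting point is the explicit description \eqref{Cois} of $C_\partial$ together with the weak symplectic form \eqref{symplectic} on $\Phi_\partial = P(T^*M)$; the whole argument is local in $M$, so I would work in a coordinate chart with coordinates $X^i$ and fiber coordinates $\eta_i$, and expand $\pi^\#$ as $\pi^{\#}(X)\eta = \pi^{ij}(X)\eta_j\,\partial_{X^i}$ for a (not yet assumed skew or integrable) matrix-valued function $\pi^{ij}$.

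First I would compute the tangent space $T_{(X,\eta)}C_\partial$ by linearizing the defining equation $dX^i - \pi^{ij}(X)\eta_j = 0$: a variation $(\delta X,\delta\eta)$ is tangent iff $d\,\delta X^i = \partial_k\pi^{ij}(X)\,\delta X^k\,\eta_j + \pi^{ij}(X)\,\delta\eta_j$, a linear ODE along the path. Next I would compute the symplectic orthogonal $(T_{(X,\eta)}C_\partial)^\perp$ with respect to \eqref{symplectic}, using integration by parts to read off which pairs $(\xi,\mu)$ annihilate every tangent vector; because the characteristic distribution of $C_\partial$ has finite codimension $2n$ by Proposition \ref{Coiso}, this orthogonal should be identifiable with an $n$-dimensional family of ``Hamiltonian'' directions generated by the gauge symmetries of the Poisson sigma model, parametrized by a function $\beta: I \to \mathbb{R}^n$ vanishing at the endpoints. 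Coisotropy means $(T_{(X,\eta)}C_\partial)^\perp \subset T_{(X,\eta)}C_\partial$, i.e. each such Hamiltonian variation must itself satisfy the linearized constraint equation above. Substituting the explicit form of the gauge variation (schematically $\delta X^i = \pi^{ij}(X)\beta_j$, $\delta\eta_i = d\beta_i + \partial_i\pi^{jk}(X)\eta_j\beta_k$, the standard PSM gauge transformation) into the linearized constraint and demanding it hold for all admissible $\beta$ and all $(X,\eta)\in C_\partial$ — including, crucially, constant paths $X\equiv x$, $\eta\equiv 0$, and first-order deformations thereof — should force first the skew-symmetry $\pi^{ij} = -\pi^{ji}$ (from the $O(\eta^0)$ or linear-in-$\eta$ terms) and then, from the $O(\eta^1)$ terms, the identity $\pi^{il}\partial_l\pi^{jk} + \pi^{jl}\partial_l\pi^{ki} + \pi^{kl}\partial_l\pi^{ij} = 0$, which is exactly $[\pi,\pi]=0$.

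The main obstacle I anticipate is the bookkeeping in the orthogonal-complement computation: \eqref{symplectic} is only \emph{weakly} symplectic, so $(T C_\partial)^\perp$ is not automatically a complement of the expected dimension, and one must genuinely use the finite-codimensionality statement of Proposition \ref{Coiso}(2) to pin down the characteristic distribution rather than some larger annihilator. I would handle this by working with the functional derivative of \eqref{symplectic} carefully, treating boundary terms from integration by parts (the variations in $(TC_\partial)^\perp$ relevant here vanish at $\partial I$, killing those terms), and matching against the known $2n$-dimensional characteristic foliation. Once the characteristic directions are correctly identified as the PSM gauge directions, the remaining step — extracting skew-symmetry and Jacobi by evaluating the coisotropy condition at judiciously chosen points of $C_\partial$ (constant solutions and their linearizations) and separating by powers of $\eta$ — is a finite-dimensional, essentially algebraic computation. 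A cleaner alternative, if available, is to cite the infinitesimal description of $C_\partial$ from \cite{Cat} directly: there the characteristic foliation is already described via the gauge symmetries, so coisotropy is literally the statement that the gauge vector fields are tangent to $C_\partial$, and one then only needs the algebraic extraction step; I would present the argument in that streamlined form and relegate the ODE manipulations to a remark.
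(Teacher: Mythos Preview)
The paper does not give its own proof of this proposition: it is stated with the citation \cite{Coiso} and no argument, so there is nothing in the paper to compare your proposal against directly. Your outline is essentially the standard argument one finds in \cite{Coiso} and \cite{Cat}: identify the Hamiltonian vector fields of the constraint functions $H_\beta$ as elements of $(TC_\partial)^\perp$, use the coisotropy hypothesis to conclude they lie in $TC_\partial$, and then read off the Jacobi identity from the linearized constraint.

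Two corrections are worth making. First, your appeal to Proposition~\ref{Coiso}(2) for the finite codimension of the characteristic distribution is circular: that proposition is proved under the hypothesis that $\pi$ is Poisson, which is exactly what you are trying to establish. Fortunately you do not need it. The gauge variations $\delta_\beta(X,\eta)$ are Hamiltonian vector fields of functions vanishing on $C_\partial$, hence lie in $(TC_\partial)^\perp$ regardless of any property of $\pi$; the coisotropy assumption then places them in $TC_\partial$, and substituting into the linearized constraint yields the algebraic identities on $\pi$ directly. No dimension count and no identification of the \emph{full} orthogonal complement is required. Second, in the paper's setup $\pi$ is already a bivector field (hence skew) --- the sentence before Proposition~\ref{Coi} says ``If we define $S(X,\eta)$ and $C_\partial$ in the same way as before'' --- so only the Jacobi identity $[\pi,\pi]=0$ remains to be extracted; your $O(\eta^0)$ step for skew-symmetry is superfluous.
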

The following geometric interpretation of $\mathcal C_{\partial}$   will lead us to the connection between Lie algebroids and Lie groupoids in Poisson geometry with the PSM. 
The condition for a vector bundle morphism to preserve the Lie algebroid structure gives rise to some PDE's that the anchor maps and the structure functions for $\Gamma(A)$ and  $\Gamma(B)$ should satisfy. For the case of PSM, regarding $T^*M$ as a Lie algebroid, we can prove that 

\[C_{\partial}=\{\mbox{Lie algebroid morphisms between  } T (\partial \Sigma) \mbox{  and  } T^*M\},\]
where the Lie algebroid structure on the left is given by the Lie bracket of vector fields on $T (\partial \Sigma)$ with identity anchor map.

\section{Symplectic reduction}
Since $C_{\partial}$ is a coisotropic submanifold, it is possible to perform symplectic reduction, which yields, when it is smooth, a symplectic finite dimensional manifold. In the case of $\Sigma$ being a rectangle and with vanishing boundary conditions for $\eta$ (see \cite{Cat}), following the notation in \cite{Crai} and \cite{Severa}, we could also reinterpret the reduced phase space $\underline{C_{\partial}}$ as 

\[\underline{C_{\partial}}=\left \{ \frac{\mbox{$T^*M$-paths}}{ T^*M \mbox{-homotopy}} \right \}.\]
In the integrable case, it was proven in \cite{Cat} that

\begin{theorem} The following data
\begin{eqnarray*}
G_0&=& M \\
G_1&=& \underline{C_{\partial}}\\
G_2&=&\{ [X_1, \eta_1], [X_2, \eta_2] \vert X_1(1)=X_2(0)\}
\\
m&\colon & G_2 \to G:= ([X_1, \eta_1], [X_2, \eta_2]) \mapsto [(X_1\ast X_2, \eta_1\ast \eta_2)] \\
\varepsilon&:& G_0 \to G_1:= x\mapsto [X\equiv x, \eta\equiv 0] \\
s&\colon &G_1 \to G_0:= [X, \eta]\mapsto X(0) \\
t&\colon &G_1 \to G_0:= [X, \eta]\mapsto X(1) \\
\iota&:& G_1 \to G_1:= [X, \eta] \to [i^* \circ X,i^* \circ \eta]\\
&\mbox{                   }& i\colon [0,1]\to [0, 1]:= t\to 1-t, 
\end{eqnarray*}
defines a symplectic groupoid that integrates the Lie algebroid $T^*M$.  \footnote{here $\ast$ denotes path concatenation}
\end{theorem}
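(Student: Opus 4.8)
The plan is to verify directly that the tuple $(G_0,G_1,G_2,m,\varepsilon,s,t,\iota)$ satisfies the groupoid axioms and that the multiplication relation is Lagrangian, using the identification of $\underline{C_\partial}$ with $T^*M$-paths modulo $T^*M$-homotopy and the well-definedness results for concatenation cited from \cite{Cat, Crai, Severa}. First I would recall why the structure maps descend to the quotient $\underline{C_\partial}$: concatenation $(X_1,\eta_1)\ast(X_2,\eta_2)$ of $T^*M$-paths is defined whenever $X_1(1)=X_2(0)$, and one checks (this is in \cite{Cat}) that the $T^*M$-homotopy class of the concatenation depends only on the homotopy classes of the factors, so $m$ is well defined on $G_2$; similarly the reversal $i^*$ sends $T^*M$-paths to $T^*M$-paths and passes to homotopy classes, giving $\iota$, and the constant cotangent path over $x\in M$ gives $\varepsilon(x)$. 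The source and target maps $s,t$ are the evaluations at the endpoints, which are clearly homotopy-invariant. This is precisely the content of the Weinstein construction of the fundamental groupoid of the Lie algebroid $T^*M$, and in the integrable case the reduced phase space is finite dimensional and smooth by the hypothesis of integrability.

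Next I would check the groupoid identities one by one, all of which reduce to standard facts about path concatenation up to homotopy: associativity $m(m\times\Id)=m(\Id\times m)$ follows because the two reparametrized concatenations $(X_1\ast X_2)\ast X_3$ and $X_1\ast(X_2\ast X_3)$ are $T^*M$-homotopic; the unit axioms $m(\varepsilon\circ s,\Id)=\Id=m(\Id,\varepsilon\circ t)$ follow because concatenating with a constant cotangent path is homotopic to reparametrization; and the inverse axioms $m(\iota,\Id)=\varepsilon\circ s$, $m(\Id,\iota)=\varepsilon\circ t$ follow because a path concatenated with its reverse is $T^*M$-homotopic to the constant path — this is exactly where the $T^*M$-homotopy equivalence relation (as opposed to reparametrization alone) is essential, and it is the analogue of the statement for the usual fundamental groupoid of a manifold. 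Compatibility of $s,t,\iota$ with $m$ (i.e. $s\circ m=s\circ\mathrm{pr}_1$, $t\circ m=t\circ\mathrm{pr}_2$, $s\circ\iota=t$) is immediate from the endpoint formulas.

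Finally, for the symplectic structure: $G_1=\underline{C_\partial}$ carries the reduced symplectic form inherited from the weak symplectic form on $\Phi_\partial$ via symplectic reduction of the coisotropic submanifold $C_\partial$ (Proposition \ref{Coiso}), and in the integrable case this reduction is a genuine finite-dimensional symplectic manifold. It remains to show $\mathrm{Gr}(m)\subset \overline{G_1}\times\overline{G_1}\times G_1$ is Lagrangian; the cleanest route is to identify $\mathrm{Gr}(m)$ with $p^3(L_\Sigma)$ for $\Sigma$ a pair of pants (triangle), i.e. as the reduction of the space of boundary values of solutions of the Euler--Lagrange equations on a surface with three boundary circles, and invoke that the reduction of the (isotropic, coisotropic) boundary-field relation of the Poisson sigma model on such a $\Sigma$ is Lagrangian — this is the statement underlying axiom A.4 in the relational setting and is established in \cite{Cat}. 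Alternatively one computes the dimension count $\dim\mathrm{Gr}(m)=2\dim G_1$ together with isotropy of $\mathrm{Gr}(m)$, the latter following from $m^*\omega_{G_1}=\mathrm{pr}_1^*\omega_{G_1}+\mathrm{pr}_2^*\omega_{G_1}$ on $G_2$, a direct computation with the integral formula \eqref{symplectic}. The main obstacle is precisely this last point — verifying that $m$ is a symplectic groupoid multiplication (the multiplicativity of $\omega$), which is not formal and relies on the field-theoretic gluing property of the Poisson sigma model; everything else is a routine transcription of the topology of path concatenation up to homotopy. That $G\rightrightarrows M$ integrates $T^*M$ then follows by differentiating the source-fiber construction, recovering the $T^*M$-path space as the (local) integration of the Lie algebroid $T^*M$ in the sense of \cite{Crai}.
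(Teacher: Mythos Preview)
The paper does not prove this theorem itself; it is stated as a result from \cite{Cat} and no argument is given in the text. Your proof plan is a faithful outline of the argument in that reference: the groupoid axioms are verified via the standard path-concatenation-up-to-$T^*M$-homotopy argument, and you correctly single out the multiplicativity of the reduced symplectic form (equivalently, Lagrangianity of $\mathrm{Gr}(m)$) as the only substantive point, with both the PSM gluing argument and the direct computation from \eqref{symplectic} being the approaches used in \cite{Cat}.
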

\begin{remark}\emph{In \cite{Cat}, this construction is also expressed as the Marsden-Weinstein reduction of the Hamiltonian action  of the (infinite dimensional) Lie algebra
$P_0\Omega^1(M):= \{\beta \in  \mathcal C^{k+1}(I, \Omega^1(M)) \mid \beta(0)=\beta(1)=0\}$
with Lie bracket
\begin{equation*}
[\beta,\gamma](u)=d\langle \beta(u), \Pi^{\sharp} \gamma(u) \rangle- \iota_{\Pi^{\sharp}(\beta(u))} d\gamma(u)+ \iota_{\Pi^{\sharp}(\gamma(u))} d\beta(u)
\end{equation*}
on the space $T^*(PM)$, on which the moment map $ \mu\colon  T^*(PM) \to P_0\Omega^1(M)^*$  is described by the equation 
\begin{equation*}
\langle \mu(X,\eta), \beta \rangle= \int_0^1 \langle dX(u)+ \Pi^{\sharp}(X(u))\eta(u), \beta(X(u),u) \rangle du.
\end{equation*}}
\end{remark}

\section{Integration of Poisson manifolds via the PSM}
The goal of this Section is to prove the following Theorem
\begin{theorem}\label{Relational} Given a Poisson manifold $(M, \Pi)$ there exists a regular relational symplectic groupoid $(\mathcal G,L,I)$ that integrates it.
\end{theorem}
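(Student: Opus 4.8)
The plan is to realize a relational symplectic groupoid integrating $(M,\Pi)$ as the unreduced boundary phase space of the Poisson sigma model with target $M$. I would take $\mathcal{G}=T^*(PM)\cong P(T^*M)$ equipped with the weak symplectic form $\omega$ of \eqref{symplectic}, which was shown above to be weak symplectic; let $I\colon\mathcal{G}\to\mathcal{G}$ be the path reversal $(X,\eta)\mapsto(i^*X,i^*\eta)$, $i(t)=1-t$, which is manifestly an antisymplectomorphism with $I^2=\Id$, so A.2 holds; and let $L\subset\mathcal{G}^3$ be the set obtained by restricting solutions of the Euler--Lagrange equations of the PSM on a disk with three marked boundary arcs (the ``pair of pants'' of Figure \ref{fig:Ax}) to its three arcs, the arcs being ordered cyclically. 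The cyclic symmetry A.1 is then immediate from the rotational symmetry of this surface.

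First I would show that $L$ is an immersed Lagrangian submanifold of $\mathcal{G}^3$. For this I would invoke the general structural property of the PSM phase space (following \cite{Cat, AlbertoPavel1}): for a compact surface $\Sigma$ with boundary, the space $L_\Sigma=p(\mathrm{EL}_\Sigma)$ of boundary restrictions of solutions is isotropic for the boundary form, since on $\mathrm{EL}_\Sigma$ the variation of the action reduces to a boundary term whose differential is $\omega$; and it is in fact Lagrangian by a direct analysis of the linearized Euler--Lagrange system with free boundary data. Applied to our surface this gives that $L$ is immersed Lagrangian. Since $I$ and the transposition $T$ are diffeomorphisms, the compositions appearing in A.3 are automatically immersed submanifolds.

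Axioms A.3--A.6 are then verified by the gluing principle: composing these Lagrangian relations corresponds to gluing copies of our surface (together with the reversal strip and the identity disk) along boundary arcs, and each axiom becomes an identification of the two glued surfaces --- A.3 compares the two ways of attaching a reversal strip, A.4 (associativity) compares the two triangulations of a square, A.5 expresses the idempotency of a disk under self-gluing, and A.6 says that attaching the disk $L_I$ to one arc produces a strip $L_2$ acting as an equivalence. In each case one checks that the composition of relations is \emph{clean}, i.e. the corresponding gluing of solution spaces is transversal, so that the composite is an immersed submanifold; being then again the boundary-restriction space of PSM solutions on the glued surface, it is Lagrangian by the same argument as for $L$. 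I expect this cleanliness of the compositions to be the main obstacle: in the infinite-dimensional weak symplectic Banach setting neither cleanliness nor the Lagrangian property of a composition comes for free, and one must use the specific structure of the PSM solution spaces on the glued surfaces. Along the way $L_1$ is identified with the space of $T^*M$-paths bounding a disk and $L_2$ with the characteristic-foliation equivalence on the constraint manifold.

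Finally, for regularity I would show that $C:=L_2\circ\mathcal{G}_{rel}$ is precisely the coisotropic Banach submanifold $C_\partial$ of \eqref{Cois}; by Proposition \ref{Coiso} it is coisotropic with characteristic foliation of codimension $2n$, yielding A.7 and the finite-codimension part of A.8, while $\underline{L_1}=L_1/(L_2\cap L_1\times L_1)$ is canonically the original $n$-manifold $M$, giving the rest of A.8. For A.9 one takes $S$ and $T$ to be the relations induced by $X\mapsto X(0)$ and $X\mapsto X(1)$ on $C_\partial/L_2$; conditions (1)--(3) follow from the explicit description of $C_\partial$, surjectivity of the source and target submersions, and the fact that $s^*f$ is Hamiltonian with vector field coming from the PSM gauge moment map. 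To conclude that this RSG \emph{integrates} $(M,\Pi)$, I would compute $\{s^*f,s^*g\}$ directly from \eqref{symplectic} and the constraint $dX=\Pi^\sharp(X)\eta$ and recover $\{f,g\}=\Pi(df,dg)$, so that the Poisson structure produced by Theorem \ref{Theo1Poisson1} on $M=L_1/L_2$ is the given one; in the integrable case this also follows from the equivalence with an honest symplectic groupoid via Theorem \ref{TheoSym}.
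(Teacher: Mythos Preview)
Your overall setup---$\mathcal{G}=T^*(PM)$ with the form \eqref{symplectic}, $I$ the path reversal, and $L$ encoding triples of $T^*M$-paths whose concatenation is null-homotopic---coincides with the paper's, and your verification of the set-theoretic identities in A.1--A.6 via gluing of PSM surfaces is a legitimate alternative to the paper's direct computation with $T^*M$-homotopies. The treatment of regularity (A.7--A.9) is also essentially the same.

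The genuine gap is in the smoothness of $L$ and the $L_i$. You reduce this to ``checking that the composition of relations is clean'' and flag it as the main obstacle, but the difficulty is more basic than cleanliness of compositions: already $L_2$, the equivalence relation of $T^*M$-homotopy on $C_\partial$, is the leaf relation of the characteristic foliation and as such is typically \emph{not} a submanifold of $C_\partial\times C_\partial$. The paper does not prove smoothness of any $L_i$ by transversality of a gluing; instead it constructs for the foliated Banach manifold $(C_\partial,\mathcal F)$ the holonomy groupoid $\mathrm{Hol}(L_2,W(\Lambda))$ via the Brown--Mucuk theory of locally Lie groupoids \cite{Brown,Brown2}, obtaining a genuine Banach manifold together with a covering map onto $L_2$ that provides the immersion required by Definition~\ref{Extended}. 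The immersed structures on $L_1$ and $L_3$ are then produced by restricting this holonomy groupoid to a neat transversal and pulling it back along the concatenation map $\tilde\beta\colon C_\partial\times_{(s,t)}C_\partial\to C_\partial$. This foliation-theoretic input is the missing idea in your proposal.

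Your Lagrangian argument is also weaker than what is needed. In a weak symplectic Banach space, ``isotropic plus a direct analysis of the linearized system'' does not yield maximality---there is no dimension count available. The paper's device is to show $TC_\partial^\perp\oplus\cdots\oplus TC_\partial^\perp\subset TL_i\subset TC_\partial\oplus\cdots\oplus TC_\partial$, pass to the finite-dimensional reduction $\underline{C_\partial}$, verify there by an honest dimension count that $\underline{TL_i}$ is Lagrangian, and then invoke Lemma~\ref{Coisotropic} to conclude that $TL_i$ itself is Lagrangian. This reduction step is what replaces the unavailable infinite-dimensional argument.
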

As we mentioned in the Introduction, integration in this setting means the following 
\begin{enumerate}[1.]
\item Such relational symplectic groupoid satisfies that $L_1/ L_2=M$ and the symplectic structure on $\mathcal G$ is compatible with the Poisson structure on $M$ according to Theorem \ref{Theo1Poisson1}
\item In the case that the Lie algebroid $T^*M$ is integrable, such relational symplectic groupoid is equivalent to a symplectic groupoid integrating it.
\end{enumerate}
The structure of the proof of this Theorem is as follows. First, we describe the defining data for the relational symplectic groupoid in terms of the PSM and $A-$ homotopy for Lie algebroids specialized in the Poisson case. Then we verify that such data in fact satisfy the relational axioms. In order to do this, we need to prove the smoothness and the Lagrangian property of the canonical relations $L_i$, which deserves special attention since we are dealing with infinite dimensional spaces.\\

\textbf{Proof of Theorem \ref{Relational}}
We will prove that the relational symplectic groupoid $(\mathcal G, L, I)$ associated to $(M, \Pi)$ is given by 
\begin{enumerate}[1.]
\item $\mathcal G := T^*(PM)$, the cotangent bundle of the path space of $M$.
\item $L=\{(\gamma_1, \gamma_2, \gamma_3) \in (T^*(PM))\}$ is such that
\begin{itemize}
\item $\gamma_i$, with $1 \leq i \leq 3$ are $T^*M$-paths.
\item The concatentation $\gamma_1 \ast \gamma_2$ is $T^*M$- homotopic to the inverse path $\gamma_3^{-1}$, or equivalently, $\gamma_1\ast \gamma_2 \ast \gamma_3$ is $T^*M$-homotopic to a constant path \footnote{$\gamma^{-1}$ denotes here $\iota \circ \gamma$ }.
\end{itemize}
\item \begin{eqnarray*}
I\colon  T^* PM &\to& T^* PM\\
\gamma &\mapsto& \gamma^{-1}.
\end{eqnarray*}

\end{enumerate}

First, we describe the defining spaces $L_i$ of the relational symplectic groupoid 
set theorically, proving that they satisfy the algebraic relational axioms and then we prove that they are in fact immersed canonical relations.\\

\textbf{\underline{A.1.}} To prove the cyclicity property, we use the following remark, that is easy to check.
\begin{remark}
\emph{ Let $\gamma_1, \, \gamma_2, \, \gamma_1^{'}$ and $\gamma_2^{'}$ be $T^*M$- paths such that $\gamma_1 \sim \gamma_1^{'}$ and $\gamma_2 \sim \gamma_2^{'}$, where $\sim$ denotes the equivalence by $T^*M$- homotopy. Then 
$$\gamma_1 * \gamma_2 \sim \gamma_1^{'} * \gamma_2^{'}.$$ 
}
\end{remark}
Now, consider $(x,y,z) \in L$. Since $x * y \sim z^{-1}$, we get that
\begin{equation*}
x * y \sim z^{-1}\Leftrightarrow (x\ast y)\ast y^{-1} \sim z^{-1} * y^{-1} \Leftrightarrow z\ast (x\ast y)\ast y^{-1} \sim z\ast z^{-1} \ast y^{-1}\Leftrightarrow z \ast x\sim y^{-1},
\end{equation*}
hence, $(z,x,y)$ (and similarly $(y,z,x)$) belongs to $L$.
\qed
\\

\textbf{\underline{A.2.}} 
If we define
\begin{eqnarray}\label{phi}
\phi\colon  [0,1] &\to& [0,1]\\
t&\mapsto& 1-t
\end{eqnarray}
Then we get that
\begin{eqnarray*}
I\colon  T^*(PM) &\to& T^*(PM)\\
\gamma &\mapsto& \phi^*\circ \gamma,
\end{eqnarray*}
hence,
$$I_{*}\delta \gamma =I_{*}(\delta X, \delta \eta)=\delta X(\phi(t)), -\delta \eta(t))$$
and therefore, using Equation \ref{symplectic},
\begin{equation*}
I^*\omega_{\gamma}(\delta_1\gamma, \delta_2\gamma)= -\int_0^1\delta_1X^i(t)\delta_2\eta_i(t)-\delta_2X^i(t)\delta_1\eta_i(t)dt=-\omega_{\gamma}(\delta_1\gamma, \delta_2\gamma)
\end{equation*}
and this proves that $I$ is an anti-symplectomorphism.
\qed
\\

\textbf{\underline{A.3.}} First, we observe that, from the definition,
\begin{equation}\label{ELE3}L_3=\{(\gamma_1, \gamma_2, \gamma_3)\in T^*(PM)^3 \mid \gamma_1* \gamma_2 \sim \gamma_3 \}.
\end{equation}
In Subsection \ref{smoothness} we will prove that $L_3$ is an immersed canonical relation.
\\

\textbf{\underline{A.4.}} We have that 
\begin{eqnarray*}
L_3\circ (L_3\times Id)&=& \{(\gamma_1, \gamma_2,\gamma_3, \gamma_4) \in (T^*(PM))^3 \mid \exists (\gamma_5, \gamma_6)\in T^*(PM)^2\\
 &\mid& (\gamma_1, \gamma_2, \gamma_5)\in L_3, (\gamma_3, \gamma_6) \in Id, (\gamma_5, \gamma_6, \gamma_4) \in L_3.\}
\end{eqnarray*}
Given the restrictions
\begin{eqnarray*}
\gamma_3&=& \gamma_6\\
\gamma_5&\sim& \gamma_1 *\gamma_2\\
\gamma_5* \gamma_3 &\sim& \gamma_4,  
\end{eqnarray*}
which implies that
$$L_3\circ (L_3\times Id)=\{(\gamma_1, \gamma_2, \gamma_3) \mid (\gamma_1*\gamma_2)*\gamma_3 \sim \gamma4\}$$
and since $(\gamma_1*\gamma_2)*\gamma_3 \sim \gamma_1* (\gamma_2* \gamma_3)$ we get that $L_3\circ (L_3\times Id)=L_3\circ (Id\times L_3)$, as we wanted.
\qed
\\

\textbf{\underline{A.5.}} From the definition, we get that
\begin{eqnarray}\label{l1}
L_1&=& \{\gamma \in T^*(PM) \mid \exists \alpha \in T^*PM , \gamma \sim \alpha*\alpha^{-1} \sim \alpha^{-1}*\alpha \}\\
&=&\{ \gamma \in T^*(PM)\mid \gamma \sim (X\equiv x_0, \eta \equiv  0)\}.
\end{eqnarray}
\\

\textbf{\underline{A.6.}}
For the case of $L_2$ it follows from the definition, that
\begin{equation*}
L_2=\{T^*M\mbox{-paths } (\gamma_1, \gamma_2) \in T^*(PM)^2 \mid \gamma_1 \sim \gamma_2 \}.
\end{equation*}
The smoothness for $L_1$ and $L_2$ will be proved in Section \ref{smoothness}.
$\qed$\\

Assuming Theorem \ref{Relational}, it is possible to prove the following

\begin{proposition}
\emph{The relational symplectic groupoid $(G,L,I)$ is regular.}
\end{proposition}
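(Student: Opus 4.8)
The plan is to verify the three regularity axioms A.7, A.8 and A.9 directly for the relational symplectic groupoid $(T^*(PM),L,I)$ constructed above, using the explicit descriptions of $L_1$, $L_2$, $L_3$ in terms of $T^*M$-paths and $T^*M$-homotopy. The unifying principle is that $C = L_2 \circ \mathcal{G}_{rel}$ turns out to be exactly the coisotropic submanifold $C_\partial$ of $T^*(PM)$ from Proposition \ref{Coiso} consisting of the $T^*M$-paths, whose characteristic foliation is the $T^*M$-homotopy equivalence, and that $L_2$ restricted to $C$ is precisely this equivalence relation. Once this identification is in place, A.7 follows from Proposition \ref{Coiso}(2) (so $C=C_\partial$ is an immersed, in fact embedded, submanifold), A.8 follows from the codimension-$2n$ statement of Proposition \ref{Coiso}(2) together with the identification $\underline{L_1} = M$ coming from the description \eqref{l1} of $L_1$ as constant-path classes (modulo $T^*M$-homotopy the space of fields $\gamma \sim (X\equiv x_0, \eta\equiv 0)$ is parametrized by the point $x_0 \in M$), and A.9 reduces to recognizing $S$ and $T$ as the source and target maps $[X,\eta] \mapsto X(0)$, $X(1)$, which are surjective submersions by the results on the reduced phase space of the PSM.

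The key steps, in order, would be: \textbf{(1)} Show $C := L_2 \circ \mathcal{G}_{rel} = \{\gamma \in T^*(PM) : \gamma \text{ is a } T^*M\text{-path}\} = C_\partial$, using the set-theoretic description of $L_2$ and the fact that a path is $T^*M$-homotopic to something iff it is itself a $T^*M$-path (closure of the space of $T^*M$-paths under the homotopy relation). \textbf{(2)} Invoke Proposition \ref{Coiso} to conclude $C$ is an immersed coisotropic Banach submanifold, giving A.7, and that its characteristic foliation has codimension $2n$. \textbf{(3)} Identify the leaves of the characteristic foliation with $T^*M$-homotopy classes (this is essentially the content of the reduced phase space description $\underline{C_\partial} = \{T^*M\text{-paths}\}/\{T^*M\text{-homotopy}\}$), hence $L_2|_C$ is the relation whose quotient is $\underline{C}$; this also verifies the tangent-level condition \eqref{char} and gives the finite codimension required in A.8. \textbf{(4)} Compute $\underline{L_1} = L_1/(L_2 \cap L_1\times L_1)$: by \eqref{l1} a class in $L_1$ is determined by the basepoint $x_0$ of the constant path it is homotopic to, so $\underline{L_1} \cong M$, a finite-dimensional smooth manifold — this is A.8. \textbf{(5)} For A.9, unwind the definition of $S$: $(c,[l]) \in S$ iff there exist $l \in [l]$, $g \in \mathcal{G}$ with $(l,c,g) \in L_3$, i.e. $l * c \sim g$; since $[l]$ ranges over $M$ via basepoints and $l$ can be taken to be the constant path at $x_0 = s(c) = c\text{-initial-point}$, one gets $S = \mathrm{graph}(s)$ with $s([X,\eta]) = X(0)$ after passing to the quotient, or rather $S$ as a relation $C \to M$ is the graph of $c \mapsto$ (the $T^*M$-homotopy class data tracking the source endpoint). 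Then \eqref{Source} is the statement that $s$ is well-defined and constant on $L_2$-classes, $dS$ surjective is submersivity of $s$ (from the PSM reduced phase space analysis), and the Hamiltonian condition on $s^*f$ follows from the moment-map / reduction description of $C_\partial$ (the constraint functions generating the characteristic foliation are exactly the Hamiltonians of pulled-back functions, cf. the sketch of Libermann's lemma).

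\textbf{The main obstacle} I expect is step (1) together with the precise identification in steps (3)--(5) that $L_2|_C$ coincides with the characteristic-foliation equivalence relation \emph{globally} and not merely infinitesimally. The excerpt itself flags (Remark after the proof of Theorem \ref{Poisson base}'s surroundings, and Remark \ref{counter1}) that $R^C = L_2$ requires a connectedness hypothesis on the leaves and on the $L_2$-classes, and that \eqref{char} only gives equality of tangent spaces; so one must actually argue that two $T^*M$-paths lying in the same characteristic leaf are $T^*M$-homotopic (connectedness of the leaves, which for the PSM follows from the fact that $T^*M$-homotopy is exactly the flow of the gauge distribution and leaves are connected by construction) and conversely. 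A secondary technical point is checking that $S$ is an \emph{immersed submanifold} of $\mathcal{G} \times M$ and that $dS$ is surjective — this is where one genuinely uses that the source map of the would-be symplectic groupoid is a submersion, a fact established in \cite{Cat} for the integrable case and which in the general relational setting is built into the path-space construction; I would cite the smoothness results proved in Section \ref{smoothness} of the paper for the $L_i$ and the analogous argument for $S$. Everything else (the equivalence-relation properties of $L_2$, the coisotropy of $C$) is already packaged as Corollaries \ref{equi} and its successor, so no new work is needed there.
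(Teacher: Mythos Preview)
Your proposal is essentially the same approach as the paper's: identify $C$ with $C_\partial$ via Proposition~\ref{Coiso} for A.7, compute $\underline{L_1}\cong M$ directly from the description~\eqref{l1} for A.8, and recognise $S$ as the graph of the source map $s(X,\eta)=X(0)$ for A.9. The paper's own proof is considerably terser than your outline: it disposes of A.9 with two short lemmas (continuity of the evaluation map $ev_t\colon PX\to X$, and surjectivity of $\delta s\colon TC\to TM$ by direct inspection of tangent vectors), and it does not spell out the verification of Equation~\eqref{Source} or the Hamiltonian condition on $s^*f$, which you rightly flag as things that need to be said.

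One correction to your self-diagnosis: the ``main obstacle'' you identify --- the global identification $R^C=L_2$ --- is \emph{not} actually needed to verify the regularity axioms A.7--A.9. Those axioms only ask that $C$ be an immersed submanifold, that $L_2$ have finite codimension and $\underline{L_1}$ be finite-dimensional, and that $S$ behave as stated. The quotient $\underline{L_1}=L_1/(L_2\cap L_1\times L_1)$ is computed directly from the set-theoretic descriptions of $L_1$ and $L_2$ in terms of $T^*M$-homotopy (two null-homotopic paths are $L_2$-equivalent iff they are homotopic to the same constant, i.e.\ share the same basepoint $x_0\in M$), and this requires no comparison with the characteristic foliation. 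The $R^C$ versus $L_2$ issue is discussed separately in the paper (in the Remark following the proof of~\eqref{char}) and plays no role in establishing regularity here, so you can drop steps (3) and the corresponding part of your obstacle discussion without loss.
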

\begin{proof}
It is easy to observe for $C= C_{\partial}$, the space of $T^*M$-paths, that by Proposition \ref{Coi}, $C$ is a Banach submanifold of finite codimension, therefore, axiom \textbf{\underline{A.7.}} holds.
To check \textbf{\underline{A.8.}}, observe that
\begin{equation*}
\underline {L_1}= L_1/ L_2= \{(X,\eta)\in T^*(PM)\mid \exists x_0\in M: (X\equiv x_0, \eta\equiv 0) \}\cong M.
\end{equation*}
We can define the map
\begin{eqnarray*}
s\colon  C &\to& M\\
\gamma=(X,\eta)&\mapsto& X(0) 
\end{eqnarray*}
It follows that $S$, defined in \textbf{\underline{A.9}} corresponds to Graph($s$). The following Lemmata ensure the fact that $dS$ is surjective.
\begin{lemma} \label{mani}\emph{
Let $X$ be a metric space and $PX$ the space of continuous maps from $I$ to $X$. We define the evaluation map
\begin{eqnarray*}
ev_t\colon  PX &\to& X\\
\gamma&\mapsto& \gamma(t).
\end{eqnarray*} 
Then $ev_t$ is a continuous map, provided that $PX$ is equipped with the uniform convergence topology.
}
\end{lemma}
\begin{proof} We fix a path $\gamma \in PX$, a time $t \in T$ and $\varepsilon \in \mathbb R^{>0}$. Consider an open ball $\mathcal U_{\varepsilon}(ev_t(\gamma))$, centered at  $ev_t(\gamma)$ with radius $\varepsilon$. Let $\mathcal V(\gamma):= ev_t^{-1}(U_{\varepsilon}(ev_t(\gamma)))$ and let $\tilde{\gamma} \in \mathcal V(\gamma)$.  The open neighborhood of $\tilde{\gamma}$ defined by 
$$ \mathcal V_{\varepsilon / 2}(\tilde{\gamma}):= \{\xi \in PX \mid d(\tilde{\gamma}, \xi)< \varepsilon/ 2 \}$$
is contained in $V(\gamma)$, therefore 
$$V(\gamma)= \bigcup_{\tilde{\gamma}\in V(\gamma)}V_{\varepsilon / 2}(\tilde{\gamma}), $$
hence, $\mathcal V(\gamma)$ is an open in $PX$, which implies that $ev_t$ is continuous.
\end{proof}
Setting $X=M$, where $M$ is our given smooth manifold, this Lemma proves that the map $s: C\to M$ is continuous, where $C$ is equipped with the subspace topology. This implies that 
Graph$(s)$ is a submanifold of $C\times M$.
To check that it corresponds to a submersion, we will prove the following
\begin{lemma}\emph{ The differential $\delta s$ of the map $s\colon C\to M$ is a well defined surjective map from $TC$ to $TM$}
\end{lemma}
\begin{proof}
Let $\gamma =(X,\eta)\in C$. A vector $\delta \gamma \in T_{\gamma}C$ is described by
$$\delta \gamma = \{ (\delta X, \delta \eta) \mid \delta X \in \Gamma (X^* TM), \, \delta \eta \in \Gamma (X^*T^*M)\}.$$
The map $\delta s$ corresponds to 
\begin{eqnarray*}
\delta s\colon  TC &\to& TM\\
\delta \gamma &\mapsto& \delta X (0),
\end{eqnarray*}
that is the evaluation of $\delta X$ at 0, which is a well defined surjective map, as we wanted.
\end{proof}
\end{proof}
The rest of the Section is devoted to prove the smoothness and the Lagrangian property of the spaces $L_i$ defining the relational symplectic groupoid. 
 
\subsection{Smoothness of $L_i$}\label{smoothness}
In this subsection, we develop the notion of \emph{path holonomy} for the foliated manifold $(T^*PM, \mathcal F)$, where $\mathcal F$ is the characteristic foliation associated to the submanifold $C_{\partial}$, which has codimension $n$, where $n= \dim (M)$. Following the construction in the case of finite dimensional foliations \cite{Mor,Brown}, it is possible to give a smooth manifold structure to the holonomy and monodromy groupoids associated to $(T^*PM, \mathcal F)$. These constructions will allow us to give smoothness conditions to the defining relations $L_i$. First, we recall some basic definitions we will use throughout the proofs.\\

\subsubsection{Foliations for Banach manifolds }
\begin{definition}\label{fol}
\emph{Let $M$ be a connected Banach manifold. Let $$\mathcal F= \{ \mathcal L_{\alpha} \mid \alpha \in A\}$$ be a family of path connected subsets of $M$. Then $(M, \mathcal F)$ is a foliation of codimension $p$ if the following conditions hold:
\begin{enumerate}[1.]
\item $\mathcal L_{\alpha} \cap \mathcal L_{\beta}= \emptyset,$ for $\alpha, \beta \in A, \alpha \neq \beta.$
\item $\bigcup_{\alpha\in A} \mathcal L_{\alpha}=M.$
\item For every $x \in M$, there exists a coordinate chart $(\mathcal U_{\lambda}, \phi_{\lambda})$ for $M$ around $x$ such that for $\alpha \in A$ with $\mathcal U_{\lambda}\cap \mathcal L_{\alpha}\neq \emptyset$, each path connected component of $\phi_{\lambda}(\mathcal U_{\lambda} \cap \mathcal L_{\alpha}) \subset B \times \mathbb R ^p$, where $B$ is a Banach space,  has the form
$$(B \times \{ c \} )\cap \phi (\mathcal U_{\lambda}),$$
where $c \in \mathbb R ^p$ is determined by the path connected component $\mathcal L_{\alpha}$, called a \emph{leaf} of the foliation. If $U$ is a subset of $M$, a path component of the intersection of $U$ with a leaf is called a \emph{plaque} of $U$.
\end{enumerate}}
\end{definition}
Besides the usual finite dimensional examples of foliations, the following proposition gives us characteristic distributions as examples of foliations at the infinite dimensional level.
\begin{proposition} \emph{Let $(M, \omega)$ be a weak symplectic Banach manifold and let $C$ be a coisotropic submanifold such that $TC^ {\perp}$ has finite codimension. Then $TC^{\perp}$ induces a foliation of finite codimension of $C$.}
\end{proposition}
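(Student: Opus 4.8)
The plan is to show that the finite-codimension symplectic-orthogonal distribution $TC^\perp$ on the coisotropic submanifold $C$ is involutive and then to produce, around each point, a foliation chart in the sense of Definition \ref{fol}. The statement is essentially the Banach-manifold version of the classical fact that the characteristic distribution of a coisotropic submanifold integrates; the only new point is that we must exhibit local product charts modeled on $B\times\mathbb{R}^p$ with $p=\operatorname{codim} TC^\perp$, rather than invoking the finite-dimensional Frobenius theorem directly.

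First I would establish involutivity. For $f\in\mathcal{C}^\infty(C)$ whose differential annihilates $TC^\perp$ — equivalently, for $f$ that is (locally) pulled back along the would-be leaf space — the Hamiltonian-type vector field $X$ defined by $\iota_X\omega|_C=df$ is a section of $TC^\perp$, and conversely $TC^\perp$ is spanned (pointwise, using that $\omega^\sharp$ is injective and $C$ coisotropic) by such vector fields. A Cartan-calculus computation with $d\omega|_C=0$ then gives $\iota_{[X_f,X_g]}\omega|_C=d\{f,g\}$ with $\{f,g\}=\omega(X_f,X_g)$, so $[X_f,X_g]$ again lies in $TC^\perp$; hence the distribution is involutive. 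This is the same argument used in the sketch of Lemma \ref{Libermann}.

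Next I would upgrade involutivity to an actual foliation. Since $TC^\perp$ has finite codimension $p$, I can locally split $TC = TC^\perp \oplus E$ with $E$ a rank-$p$ complement, and choose $p$ functions $h_1,\dots,h_p\in\mathcal{C}^\infty(C)$ whose differentials restrict to a coframe of the annihilator of $TC^\perp$; the map $h=(h_1,\dots,h_p)\colon \mathcal{U}\to\mathbb{R}^p$ is then a submersion with $\ker dh = TC^\perp$. Using that $h$ is a submersion of Banach manifolds one obtains a chart $\phi_\lambda\colon \mathcal{U}_\lambda \to B\times\mathbb{R}^p$ in which $h$ is the second projection (Banach implicit/submersion theorem); involutivity of $\ker dh$ guarantees that, after shrinking, the plaques $\phi_\lambda^{-1}(B\times\{c\})$ are exactly the path components of the intersections of $\mathcal{U}_\lambda$ with the integral leaves, which I define as the equivalence classes generated by "lying in a common plaque." Checking conditions 1 and 2 of Definition \ref{fol} is then formal, and condition 3 is precisely the chart just constructed.

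The main obstacle is the analytic step: producing the foliation chart requires a Frobenius-type theorem for Banach manifolds adapted to a finite-codimension (rather than finite-rank) involutive distribution. The clean way around this is to work with the local submersion $h$ onto $\mathbb{R}^p$ coming from the finite-codimension hypothesis, so that only the finite-dimensional implicit function theorem and the Banach submersion theorem are needed — this is exactly why the codimension assumption is imposed. I would also need to check compatibility of the locally defined $h$'s on overlaps so that the leaves are globally well defined; this follows because any two such submersions differ by a local diffeomorphism of $\mathbb{R}^p$, which sends plaques to plaques. Everything else (transitivity of the leaf relation, the partition property) is routine point-set topology once the charts are in place.
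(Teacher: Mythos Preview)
Your involutivity argument has a genuine gap. You claim that $TC^\perp$ is spanned by Hamiltonian-type vector fields $X_f$ with $\iota_{X_f}\omega|_C = df$ for functions $f$ whose differential annihilates $TC^\perp$. But $TC^\perp$ is precisely the kernel of $\omega|_C$: for $X\in TC^\perp$ one has $\omega(X,Z)=0$ for every $Z\in TC$ by definition. Hence any $X\in\Gamma(TC^\perp)$ satisfies $\iota_X\omega|_C=0$, and the only function it could be Hamiltonian for is a locally constant one. Your proposed generating set for $TC^\perp$ therefore consists of the zero vector field alone. You seem to have transported the mechanism from the sketch of Lemma~\ref{Libermann}, where one works on a genuinely symplectic manifold and shows that the \emph{orthogonal} of a given foliation is spanned by Hamiltonian vector fields; here $TC^\perp$ is the kernel rather than a complement, and the analogy breaks down. (The classical way to span $TC^\perp$ by Hamiltonian fields uses functions on the ambient $M$ that vanish on $C$, but that requires $\omega^\sharp$ to be surjective, which is not assumed in the weak symplectic setting.)

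The paper proves involutivity by a direct computation that avoids Hamiltonian vector fields entirely: for $X,Y\in\Gamma(TC^\perp)$ and $Z\in\Gamma(TC)$ one expands $0=d\omega(X,Y,Z)$ via the Cartan formula and observes that five of the six terms vanish because $X,Y\in TC^\perp$ while $[Y,Z],[X,Z]\in TC$, leaving $\omega([X,Y],Z)=0$ for all $Z\in TC$, hence $[X,Y]\in TC^\perp$. This works verbatim in the weak Banach setting.

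Your second step---producing the foliation chart via a local submersion $h\colon \mathcal U\to\mathbb{R}^p$ and the Banach submersion theorem---is more explicit than the paper, which simply invokes the Frobenius theorem for Banach manifolds from Lang. What you outline is essentially that theorem in the finite-codimension case, so this part is fine; but without a correct involutivity argument the charts have nothing to organize.
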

\begin{proof}
We will check first that the distribution $TC^ {\perp}$ is involutive, that is,
$$\omega([X,Y], Z)=0, \forall X,Y \in TC^{\perp}, Z \in TC.$$ 
We know that
\begin{eqnarray*}
d\omega (X,Y,Z)&=& \omega(X, [Y,Z])-\omega(Y, [X,Z])+ \omega (Z, [X,Y])\\
&+& X\omega(Y,Z)-Y\omega(X,Z)+ Z\omega(X,Y)\\
&=&-\omega([X,Y], Z)=0.
\end{eqnarray*}
By the use of Frobenius Theorem for Banach manifolds (for references see \cite{Lang}), this distribution is integrable and it induces a foliation on $C$ of finite codimension.
\end{proof}
In our case of interest the Banach manifold is $\mathcal G= T^*(PM)$ and $C= C_{\partial}$. In \cite{Cat} it is proven that $TC^{\perp}$ has finite codimension. 
Now, we describe the monodromy and holonomy groupoids for foliations.

 \subsubsection{Monodromy groupoid over a foliated manifold}
 
  Let $(M, \mathcal{F})$ be a foliation. The monodromy groupoid, denoted 
by Mon$(M, \mathcal F)$, has as space of objects the manifold $M$ and the space of morphisms is defined as follows:
\begin{itemize}
 \item If $x,y \in M$ belong to the same leaf in the foliation, the morphisms between $x$ and $y$ are homotopy classes, relative to the 
end points, of paths between $x$ and $y$ along the same leaf.
\item If $x$ and $y$ are not in the same leaf, there are no morphims between them.
\end{itemize}
\subsubsection{Holonomy groupoid over a foliated manifold}

We introduce the notion of holonomy for a foliation, that will be useful for our purposes. From now on, $\mathcal L_p$ will denote the leaf on $\mathcal F$ through the point $p$; in this case $p$ should not be confused with the index $\alpha$ in Definition  \ref{fol}, we introduce this new notation for simplicity. 


Given $p^{'} \in \mathcal{L}_p$, with $\mathcal{L}_p$ a leaf on $\mathcal{F}$, we consider a path $\alpha_0$ in $\mathcal{L}_p$ such that
$\alpha_0([0,1])\subset U_0$, with $U_0$ given by the foliation chart $(U_0, \phi_0)$. 
Consider $q_0 \in \mathcal{L}_p$ such that $\phi_0(p)$ and $\phi_0(q_0)$ 
lie on the same plaque (i.e in the same leaf with respect to the chart $(U_0, \phi_0)$) and let $T_{p^{'}}$ and $T_{q_0}$ be transversal to 
$\mathcal{F}$ through $p^{'}$ and $q_0$ respectively. A \emph{local holonomy} from $p^{'}$ to $q_0$, denoted by $Hol^{T_{p^{'}},T_{q_0}}(\alpha_0)$
is defined as a germ of a diffeomorphism $f\colon  T_{p^{'}} \to T_{q_0}$, in such a way that there exists an open neighborhood $A$ in $T_{p^{'}}$ where $f$ is a 
leaf preserving diffeomorphism (i.e $a$ and $f(a)$ belong to the same leaf, for $a \in A$). 

Given a foliation and a transversal $T$ through $x \in \mathcal L_{p}$, using the fact that
\[\mbox{Diff}_{x}(T) \cong \mbox{Diff}_0(\mathbb{R}^q) \] 
where $\mbox{Diff}_0$ denotes the group of the germs of diffeomorphisms at 0, $q$ being the codimension of $\mathcal{F}$ and that the holonomy is independent of the homotopy class of the path (up to conjugation with an element in $\mbox{Diff}_0(\mathbb{R}^q$) ), we can see the holonomy as a 
group homomorphism

\[\mbox{hol:} \pi_1(L,x) \to \mbox{Diff}_0(\mathbb{R}^q),\]

The image of this map is denoted by $\mbox{Hol}(L,x)$.\\
Based on this notion, we define the holonomy groupoid of $\mathcal{F}$ in the natural way: the space of objects is the foliated manifold and the 
space of morphisms is the classes of holonomy of paths along the leaves of $\mathcal{F}$. Observe that the isotropy groups of this groupoid are precisely the holonomy 
groups $\mbox{Hol}(L,x)$.\\

\subsubsection{Smoothness of $L_2$}
It can be checked (see \cite{Brown}) that, given a foliated manifold $(M,\mathcal F)$, the equivalence relation $R: M \nrightarrow M$ 
of being in the same leaf, is not necessarily  a smooth submanifold of the cartesian product of the 
foliated manifold with itself. 

Fortunately, there is a way to ``resolve" the singularities, by using the holonomy groupoid associated to what are called \emph{locally Lie groupoids}. Following \cite{Brown, Brown2} we construct the holonomy groupoid associated to the equivalence relation $L_2$, denoted by $\mbox{Hol}(L_2, W)$, where the pair $(L_2,W)$ is the locally Lie groupoid associated to $L_2$ \cite{Brown}.
First, some definitions.
\begin{definition}\emph{
Let $G\rightrightarrows M$ be a groupoid. The \emph{difference} map $\delta: G\times_{(s,s)}G\to G$ is given by $\delta(g,h)=\mu(g, \iota(h))$.}
\end{definition}
\begin{definition}\emph{Let $G\rightrightarrows M$ be a (topological) groupoid. An admissible local section of $G$ is a map $\gamma: U \to G$ from an open set $U$ of $M$ satisfying the following properties:
\begin{enumerate}[1.]
\item $(s\circ \gamma) (x)=x, \forall x\in M$.
\item $(t\circ \gamma)(U)$ is an open in $M$.
\item $(t\circ \gamma): U \to \mathfrak(t\circ \gamma)(U)$ is a homeomorphism.
\end{enumerate}}
\end{definition}
Now, consider  a subspace $M\subset W \subset G$. The triple $(s,t, W)$ is said to have \emph{enough  smooth admissible local sections} \cite{Brown}, if for each $w \in W$ there is an admissible local section $\gamma$ of $G$ satisfying that:
\begin{itemize}
\item $(\gamma \circ s)(w)=w$.
\item $\mathfrak{Im}(\gamma) \subset W$.
\item $\gamma$ is smooth.
\end{itemize}
Now we are able to introduce the notion of locally Lie groupoid:
\begin{definition}\label{locally}\emph{\cite{Brown}. A \emph{locally Lie groupoid} is a pair $(G,W)$, where $G\rightrightarrows M$ is a groupoid and a manifold $W$ such that:
\begin{enumerate}[1.]
\item $M\subset W \subset G$.
\item $W= \iota(W)$.
\item The set $$W_{\delta}:= (W\times_{(s,s)}W)\cap \delta^{-1}(W)$$
is open in $W\times_{(s,s)}W$ and $\delta$ restricted to $W_{\delta}$ is smooth.
\item $s$ and $t$ restricted to $W$ are smooth and $(s,t, W)$ has enough admissible local sections.
\item $W$ generates $G$ as a groupoid.
\end{enumerate}
}
\end{definition}
We will show how $L_2$ can be regarded as a locally Lie groupoid and its associated holonomy groupoid will be the covering manifold which allows us to regard $L_2$ as a morphism in $\mbox{\textbf{Symp}}^{Ext}$.

First, consider the foliated manifold $(M, \mathcal F)$ and a subset $U$ of $M$. We denote $L_2(U)$ the equivalence relation on $U$ defined by 
\[x \sim y \Longleftrightarrow  x \mbox{ and } y \mbox{ are in the same plaque}.\] 
Now, we consider $\Lambda=\{ (\mathcal U_{\lambda}, \phi_{\lambda})\}$ a foliation atlas for $(M, \mathcal F)$ and we define
$$W(\Lambda):= \bigcup _{\mathcal U_{\lambda}} L_2(\mathcal U_{\lambda}),$$
for all domains $\mathcal U_{\lambda}$ of the atlas $\Lambda$.

We prove the following
\begin{proposition} \emph{\cite{Brown}. $W(\Lambda)$, endowed, with the subspace topology with respect to $L_2$ (and hence regarded as a topological subspace of $M \times M$), has the structure of a smooth manifold, coming from the foliated atlas $\Lambda$.}
\end{proposition}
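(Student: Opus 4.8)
The plan is to exhibit an explicit atlas on $W(\Lambda)$ whose charts come from the foliation charts $(\mathcal U_\lambda,\phi_\lambda)$ and to check that the transition maps are smooth. Recall that as a set $W(\Lambda)=\bigcup_\lambda L_2(\mathcal U_\lambda)$, where $L_2(\mathcal U_\lambda)\subset \mathcal U_\lambda\times\mathcal U_\lambda$ consists of pairs of points lying in the same plaque of $\mathcal U_\lambda$. First I would fix a foliation chart $\phi_\lambda\colon \mathcal U_\lambda\to B\times\mathbb R^p$ (with $B$ a Banach space and $p$ the codimension) and observe that a point $x\in\mathcal U_\lambda$ writes as $\phi_\lambda(x)=(b(x),c(x))$, with $c(x)\in\mathbb R^p$ the transverse coordinate labeling the plaque. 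Then $(x,y)\in L_2(\mathcal U_\lambda)$ precisely when $c(x)=c(y)$, so the map
\[
\Psi_\lambda\colon L_2(\mathcal U_\lambda)\longrightarrow B\times B\times\mathbb R^p,\qquad (x,y)\longmapsto (b(x),b(y),c(x))
\]
is a bijection onto an open subset of the model space $B\times B\times\mathbb R^p$ (the image being $\{(b_1,b_2,c): (b_1,c),(b_2,c)\in\phi_\lambda(\mathcal U_\lambda)\}$, which is open since $\phi_\lambda(\mathcal U_\lambda)$ is). This is the candidate chart.

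Next I would check compatibility of topologies: $W(\Lambda)$ carries the subspace topology from $L_2\subset M\times M$, and one must verify that each $\Psi_\lambda$ is a homeomorphism onto its image for that topology, so that the resulting manifold structure is compatible with the ambient one. This amounts to noting that $L_2(\mathcal U_\lambda)$ is open in $L_2\cap(\mathcal U_\lambda\times\mathcal U_\lambda)$ — which holds because within a single foliation chart ``same leaf'' and ``same plaque'' agree on a neighborhood — together with the fact that $(b(x),c(x))=\phi_\lambda(x)$ depends homeomorphically on $x$. Then I would compute a transition map $\Psi_\mu\circ\Psi_\lambda^{-1}$ on the overlap $L_2(\mathcal U_\lambda)\cap L_2(\mathcal U_\mu)$: unwinding definitions, it is built out of the change-of-coordinates map $\phi_\mu\circ\phi_\lambda^{-1}$, which by Definition~\ref{fol} has the block-triangular form $(b,c)\mapsto(g(b,c),h(c))$ — the transverse coordinate $c$ transforms only among itself. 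Hence the transition map of $W(\Lambda)$ is
\[
(b_1,b_2,c)\longmapsto\bigl(g(b_1,c),\,g(b_2,c),\,h(c)\bigr),
\]
which is smooth because $g$ and $h$ are, being components of the smooth diffeomorphism $\phi_\mu\circ\phi_\lambda^{-1}$. This gives $W(\Lambda)$ a smooth manifold structure (modeled on $B\times B\times\mathbb R^p$) refining the subspace topology.

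The main obstacle, and the point deserving the most care, is the topological matching in the middle step: a priori the leafwise equivalence relation $L_2$ need not be an embedded submanifold of $M\times M$ (indeed this is exactly the holonomy pathology the paper is working around), so one cannot simply say ``$W(\Lambda)$ is an open submanifold of $L_2$''. What saves us is that we only chart $W(\Lambda)$ by the \emph{plaque} relations $L_2(\mathcal U_\lambda)$, each of which is genuinely a nice submanifold of $\mathcal U_\lambda\times\mathcal U_\lambda$, and one must argue that these local pieces are open in $W(\Lambda)$ and that their given topology coincides with the subspace topology — using that distinct plaques of a chart sit in disjoint transverse slices, so a plaque is locally closed, and that the intersection of two charts' plaque-relations is again open in each. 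Once this is in place the smoothness of transitions is the routine block-triangular computation sketched above, and the construction of the foliation atlas on $W(\Lambda)$ is complete; this is precisely the manifold structure that will later allow $L_2$ to be resolved into an immersed canonical relation via its holonomy groupoid.
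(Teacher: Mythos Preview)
Your proposal is correct and follows the same approach as the paper: both build charts on $W(\Lambda)$ directly from the foliation charts $\phi_\lambda$, using that the plaque relation in a single chart is the level set of the transverse coordinate. The paper's own proof is a two--line sketch that defers to \cite{Brown} and only says that the equivalence relation on $\phi_\lambda(\mathcal U_\lambda)$ is determined by the connected components of $\phi_\lambda(\mathcal U_\lambda)\cap(B\times\{c\})$ and that $\phi_\lambda$ then induces the charts; your explicit map $\Psi_\lambda(x,y)=(b(x),b(y),c(x))$ and the block--triangular transition computation are precisely the details behind that sentence. Your identification of the topological matching as the delicate point is well taken and is exactly why the subsequent Theorem~\ref{Locally} in the paper allows the atlas $\Lambda$ to be \emph{chosen} appropriately; with a sufficiently refined atlas the issue you raise (that $L_2(\mathcal U_\lambda)$ need not a priori be open in $W(\Lambda)$ for the subspace topology) disappears.
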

\begin{proof}
The same argument explained in \cite{Brown} works in the case of a foliation on Banach manifold
with finite codimension. There is an induced equivalence relation on $\phi_{\lambda}(\mathcal U_{\lambda}),$ that is determined by the connected components of $\phi_{\lambda}(\mathcal U_{\lambda}) \cap B \times \{ c\} \subset B  \times \mathbb R ^q$
and by using the coordinate function $\phi_{\lambda}$ we induce coordinate charts for $W(\Lambda)$.
\end{proof}
Moreover, it is proven (Theorem 1.3 in \cite{Brown}) that
\begin{theorem}\label{Locally}
Let $(M, \mathcal F)$ be a foliated manifold. Then an atlas $\Lambda$ can be chosen such that $(L_2, W(\Lambda))$ is a locally Lie groupoid. 
\end{theorem}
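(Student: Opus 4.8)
The plan is to verify, for a suitably chosen foliation atlas $\Lambda$, the five conditions of Definition \ref{locally} with $G=L_2\rightrightarrows M$ the ``same leaf'' equivalence relation, $s,t\colon L_2\to M$ the two factor projections, $\iota(x,y)=(y,x)$, and $W=W(\Lambda)$ carrying the smooth structure furnished by the preceding proposition. The only genuine freedom is the choice of $\Lambda$: following \cite{Brown} I would refine an arbitrary foliation atlas to a \emph{regular} one, in which every chart domain $\mathcal U_\lambda$ has connected plaques and any plaque of $\mathcal U_\lambda$ meets any plaque of $\mathcal U_\mu$ in at most one plaque of $\mathcal U_\lambda\cap\mathcal U_\mu$. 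Such a refinement exists on any Banach foliated manifold exactly as in the finite-dimensional case, since the construction uses only the local product form of Definition \ref{fol} and never the dimension of the modelling Banach space; the finite codimension $q$ enters only through the transverse factor $\mathbb R^q$.

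Conditions (1), (2), (4) and (5) are then direct transcriptions of Brown's argument. For (1)--(2): the diagonal $\triangle M$ lies in every $L_2(\mathcal U_\lambda)$, so $M\subset W(\Lambda)$; each $L_2(\mathcal U_\lambda)$ consists of pairs in a common plaque, hence in a common leaf, so $W(\Lambda)\subset L_2$; and each $L_2(\mathcal U_\lambda)$ is symmetric, so $\iota(W(\Lambda))=W(\Lambda)$. For (4): in the coordinates on $W(\Lambda)$ induced by the $\phi_\lambda$ the projections $s,t$ are smooth submersions, and through a point $w=(x,y)\in L_2(\mathcal U_\lambda)$ an admissible local section is obtained by transporting a transversal slice at $x$ inside $\mathcal U_\lambda$ onto the matching points of the plaque through $y$ (the local-holonomy map); it is smooth with image in $W(\Lambda)$, so $(s,t,W(\Lambda))$ has enough smooth admissible local sections. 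For (5): given $(x,y)\in L_2$, choose a path from $x$ to $y$ inside their common leaf (leaves are path-connected), cover its image by finitely many chart domains with consecutive overlaps, pick intermediate points in the overlaps, and write $(x,y)$ as the composite of the corresponding ``same plaque'' arrows; thus $W(\Lambda)$ generates $L_2$.

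The substantive condition is (3): that $W_\delta=(W(\Lambda)\times_{(s,s)}W(\Lambda))\cap\delta^{-1}(W(\Lambda))$ be open in the fibered product and that $\delta\bigl((x,y),(x,z)\bigr)=(y,z)$ restrict to a smooth map there. For openness, if $(y,z)\in L_2(\mathcal U_\mu)$ then, since within a foliation chart a plaque depends continuously on any of its points, one can shrink neighbourhoods of $(x,y)$ and $(x,z)$ inside single chart-pieces so that their $\delta$-image stays in $L_2(\mathcal U_\mu)$; here the regularity of $\Lambda$ is exactly what makes the relevant plaques unique and smoothly varying. Smoothness of $\delta$ on $W_\delta$ then follows by reading it in the induced charts: a pair lying in a common plaque is recorded by its base point together with the transverse ``jump'' in $\mathbb R^q$, and composing plaque-arrows amounts to transporting and adding this transverse data, which is smooth. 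I expect verifying (3), together with the bookkeeping needed to make $\Lambda$ regular enough for it to hold, to be the main obstacle; everything else is Brown's proof \cite{Brown} with the single new observation that the ambient model being an infinite-dimensional Banach space is harmless, since all transverse computations take place in the finite-dimensional $\mathbb R^q$.
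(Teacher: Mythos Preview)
Your proposal is correct and follows the same route as the paper, which does not give an independent proof of this theorem but simply cites it as Theorem~1.3 in \cite{Brown} and adds a remark on why the argument carries over to the Banach setting with finite-codimension foliations. Your outline is a faithful unpacking of Brown's proof, and your identification of condition~(3) as the only substantive point agrees with the paper's remark.

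One small point of comparison: the paper's remark singles out property~(3) as ``the only non-trivial step'' in the Banach extension and then invokes the Lebesgue Covering Lemma to decompose a leaf path into pieces with endpoints in $W(\Lambda)$. That path-decomposition argument is really the verification of property~(5) (that $W(\Lambda)$ generates $L_2$), which you handle in exactly this way; you then treat~(3) separately via the regularity of the atlas and the local product form of the charts. Your account is therefore more carefully organised than the paper's remark, but the content and the reliance on \cite{Brown} are the same.
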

\begin{remark}\emph{
In \cite{Brown} the construction of the locally Lie groupoid structure on $(L_2, W(\Lambda))$ is done for finite dimensional foliations but it can be naturally extended to the case where the leaf is a Banach manifold and $\mathcal F$ has finite codimension. The only non-trivial step is to check that the property (3) in Definition \ref{locally} is satisfied. For this case, thanks to The Lebesgue Covering Lemma, that can be applied in the Banach case,  there is always a decomposition of a path $a$ from $x$ to $y$ on a leaf $\mathcal L$ in smaller paths $a_i$ such that $a_i$ is a path from $x_i$ to $x_{i+1}$, with $x_0=x, x_{n+1}=y$,  with the property that $(x_i,x_{i+1}) \in W(\Lambda)$.}
\end{remark}

In \cite{Brown2}, the holonomy groupoid for a locally topological groupoid is constructed through a universal property, namely:
\begin{theorem}\label{holo}\emph{(Globalisation Theorem)}\cite{Brown2}. Let $(G,W)$ be a locally topological groupoid. Then there is a topological groupoid $H\rightrightarrows N$, a morphism $\phi: H \to G$ of groupoids, and an embedding $i: W \to H$ of $W$ to an open neighborhood of $N$ satisfying the following:
\begin{enumerate}[1.]
\item $\phi$ is the identity on objects, $\phi\circ i(w)=w, \forall w \in W$, $\phi^{-1}(W)$ is open in $H$ and $\phi \mid_{W}: \phi^{-1}(W) \to W$ is continuous.

\item (Universal property). If $A$ is a topological groupoid and $\xi: A \to G$ is a morphism of groupoids satisfying:
\begin{itemize}
\item $\xi$ is the identity on objects.
\item $\xi\mid_{W}: \xi(W) \to W$ is continuous and $\xi^{-1}(W)$ is an open in $A$ and generates $A$.
\item The triple $(s_A,t_A,A)$ has enough continuous admissible local sections,
\end{itemize}
then there is a unique morphism $\xi^{'}: A \to H$ of topological groupoids such that $\phi \xi^{'}=\xi$ and $\xi^{'}a=i\xi a, \, \forall a \in \xi^{-1}(W).$

\end{enumerate}
\end{theorem}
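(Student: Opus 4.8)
Since this is the Globalisation Theorem of \cite{Brown2}, the plan is to reproduce its construction and to observe that nothing in it uses finite dimensionality, so that it applies verbatim to the Banach locally Lie groupoid $(L_2,W(\Lambda))$ above. The holonomy groupoid $H=\mathrm{Hol}(G,W)$ is built out of the admissible local sections of $G$ taking values in $W$. Write $\Gamma^c(W)$ for the set of continuous admissible local sections $\sigma\colon U\to G$ with $\mathrm{Im}(\sigma)\subset W$; these compose via the usual product of admissible local sections, and one lets $\Gamma(W)$ be the subgroupoid of the groupoid of all admissible local sections of $G$ generated by $\Gamma^c(W)$. A typical element of $\Gamma(W)$ is thus a finite product $\sigma_n\ast\cdots\ast\sigma_1$ with each $\sigma_i\in\Gamma^c(W)$; it is an admissible local section of $G$, but its image is in general no longer contained in $W$, and that discrepancy is exactly what $H$ will record.

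First I would take the underlying set of $H$ to be the set of germs $[\sigma]_x$ at points $x\in M$ of elements $\sigma\in\Gamma(W)$, modulo the equivalence relation generated by $[\sigma]_x\sim[\tau]_x$ whenever $\sigma,\tau\in\Gamma(W)$ agree at $x$ and the germ at $x$ of $\sigma\ast\tau^{-1}$ coincides with the germ of some element of $\Gamma^c(W)$ — that is, $\sigma$ and $\tau$ differ near $x$ only by a piece that genuinely lives in $W$, which is where the topology of $W$ and the partial product $\delta|_{W_\delta}$ enter. Source, target and multiplication are inherited from composition of germs, and since $M\subset W$ the identities sit inside, giving object set $N=M$. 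The embedding $i\colon W\to H$ sends $w$ to the class of the germ at $s(w)$ of any $\sigma\in\Gamma^c(W)$ through $w$; such $\sigma$ exist and give the same class precisely because $(s,t,W)$ has enough continuous admissible local sections. One then topologizes $H$ by declaring $i$ to be an open embedding onto a neighbourhood of $N$ and transporting the topology around by multiplication (which becomes a local homeomorphism near $N$); checking that this yields a well-defined topological groupoid is routine but is the technical heart of the argument. The morphism $\phi\colon H\to G$ is $[\sigma_n\ast\cdots\ast\sigma_1]_x\mapsto(\sigma_n\ast\cdots\ast\sigma_1)(x)$, well defined on the quotient because every identification preserves the value in $G$; then $\phi\circ i=\mathrm{id}_W$, $\phi^{-1}(W)=i(W)$ is open, and $\phi|_W$ is continuous, which is statement (1).

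For the universal property (2), given $\xi\colon A\to G$ as in the hypotheses, every element of $A$ is a finite product of elements of $\xi^{-1}(W)$ (the latter generates $A$, and $A$ has enough continuous admissible local sections, so such factorisations can be realised by local sections), whence one is forced to set $\xi'(a_n\cdots a_1)=[\widehat{\xi a_n}\ast\cdots\ast\widehat{\xi a_1}]$, with $\widehat{\xi a_i}\in\Gamma^c(W)$ a continuous admissible local section through $\xi(a_i)$. The only thing to verify is independence of the chosen factorisation of $a$, and this is exactly what the generating relations defining $H$ were tailored to give, using that $\xi|_W$ is continuous and $\xi^{-1}(W)$ open; uniqueness of $\xi'$ is immediate from $\phi\xi'=\xi$ together with $\xi'a=i\xi a$ on the generating set $\xi^{-1}(W)$. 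As noted in the remark preceding the statement, the only places needing care in the Banach case — existence of enough continuous admissible local sections, and decomposition of a leafwise path into short $W$-pieces via the Lebesgue covering lemma — go through for $(T^*PM,\mathcal F)$, so the theorem holds in our setting.

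The main obstacle is calibrating the equivalence relation on germs so that $H$ is at once coarse enough for $\phi$ to be well defined and for $i(W)$ to be genuinely open in $H$, and fine enough for the universal property to hold with a \emph{unique} $\xi'$; and then promoting this set-level quotient to an honest topological groupoid, which requires checking continuity of multiplication and inversion near $N$ directly from the charts supplied by $W$.
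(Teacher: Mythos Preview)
The paper does not prove this theorem at all: it is stated with the citation \cite{Brown2} and used as a black box, with no proof environment following it. So there is nothing to compare your argument against in the paper itself; the authors simply invoke the Globalisation Theorem of Aof--Brown and move on to apply it to the locally Lie groupoid $(L_2,W(\Lambda))$.

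That said, your sketch is a faithful outline of the actual construction in \cite{Brown2}: building $H$ from germs of finite products of continuous admissible local sections with values in $W$, modding out by the relation that identifies two germs when their quotient lies in $\Gamma^c(W)$, topologising via the embedding $i\colon W\to H$, and deriving the universal property from the fact that any $a\in A$ factors through $\xi^{-1}(W)$. Your remarks on the Banach case (enough admissible local sections, Lebesgue covering lemma for leafwise paths) are also in the spirit of the paper's own earlier remark on extending \cite{Brown} to infinite dimensions. For the purposes of this paper, though, all of this is surplus to requirements: a one-line citation is what the authors intend here, and the substantive work in the paper lies in checking that $(L_2,W(\Lambda))$ satisfies the hypotheses, not in reproving the Globalisation Theorem.
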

The groupoid $H$ is called the holonomy groupoid of the locally topological groupoid $(G,W)$ and is denoted by $Hol(G,W)$. In the smooth setting, due to Theorem \ref{Locally} , we can prove that
\begin{proposition}\emph{$Hol(L_2, W(\Lambda))$ is a Lie groupoid.}
\end{proposition}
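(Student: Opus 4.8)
\section*{Proof proposal}

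The plan is to promote the topological object produced by the Globalisation Theorem to a smooth one, using that its ``local model'' $W(\Lambda)$ is already a Banach manifold. Concretely, I would start by invoking Theorem~\ref{Locally} to fix an atlas $\Lambda$ of $(M,\mathcal F)$ for which $(L_2,W(\Lambda))$ is a locally Lie groupoid in the sense of Definition~\ref{locally}: so $W(\Lambda)$ is a Banach manifold, $s$ and $t$ restrict to smooth submersions on it, the difference map $\delta$ is smooth on the open set $W_\delta=(W(\Lambda)\times_{(s,s)}W(\Lambda))\cap\delta^{-1}(W(\Lambda))$, and $(s,t,W(\Lambda))$ has enough smooth admissible local sections. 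Applying the Globalisation Theorem~\ref{holo} to $(L_2,W(\Lambda))$ then yields a topological groupoid $H:=\operatorname{Hol}(L_2,W(\Lambda))\rightrightarrows M$, together with a groupoid morphism $\phi\colon H\to L_2$ which is the identity on objects and an embedding $i\colon W(\Lambda)\to H$ onto an open neighbourhood of the object manifold $M$ inside $H$, with $\phi\circ i=\mathrm{id}_{W(\Lambda)}$ and $i(W(\Lambda))$ generating $H$. It therefore remains to exhibit a Banach manifold structure on $H$ for which $s,t$, the multiplication $m$ and the inversion $\iota$ are smooth.

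Next I would transport the smooth structure of $W(\Lambda)$ onto $H$ by translation. Near the identities $H$ already carries the smooth structure of $i(W(\Lambda))$; for a general $h\in H$ one uses a smooth admissible local section $\sigma$ of $H$ through $h$ (these exist: the ones of $(s,t,W(\Lambda))$ sit inside $H$ via $i$ and can be propagated, because $i(W(\Lambda))$ generates $H$) to translate a chart of $W(\Lambda)$ around $\phi(h)$ to a chart of $H$ around $h$. The change-of-chart maps are then expressed through the partial multiplication of $H$ restricted to translates of $i(W(\Lambda))$ and through $\delta|_{W_\delta}$, hence are smooth; this makes $H$ a Banach manifold --- possibly non-Hausdorff, as is usual for holonomy groupoids --- in which $i$ is an open smooth embedding, $\phi$ is smooth on $\phi^{-1}(W(\Lambda))$, and $s,t$ are smooth submersions. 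A standard propagation argument along words in $W(\Lambda)$, using smoothness of $s,t$ on $W(\Lambda)$ and of $\delta|_{W_\delta}$ together with the existence of enough smooth local sections, then upgrades $m\colon H\times_{(s,t)}H\to H$ and $\iota\colon H\to H$ to globally smooth maps. Hence $H$ is a Lie groupoid in the Banach category, which is the assertion.

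The point that requires care --- and which I expect to be the main obstacle --- is that the construction above is carried out in \cite{Brown,Brown2} only for finite-dimensional foliations, whereas here the leaves of $\mathcal F$ are Banach manifolds. The reason it still goes through is that the argument is entirely local and only the leaf directions become infinite dimensional: the transverse direction is $\mathbb R^{q}$ with $q=\operatorname{codim}\mathcal F<\infty$, so holonomy still takes values in $\operatorname{Diff}_0(\mathbb R^{q})$ and the ``resolution of singularities'' that $H$ performs over the non-smooth locus of $L_2$ is a finite-dimensional phenomenon. All the analytic ingredients one needs --- the inverse and implicit function theorems, the Frobenius theorem for Banach manifolds, and the Lebesgue covering lemma used to refine $\Lambda$ --- are available in the Banach setting and were already used in the proofs of Theorems~\ref{Locally} and~\ref{holo}; with them in hand, checking compatibility of the translated charts and smoothness of the structure maps is the routine part, following \cite{Brown,Brown2} with $\mathbb R^{n-q}$ replaced by the relevant Banach model space.
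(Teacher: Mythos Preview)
Your proposal is correct and follows exactly the route the paper takes: invoke Theorem~\ref{Locally} to get a locally Lie groupoid, apply the Globalisation Theorem~\ref{holo} in its smooth version from \cite{Brown,Brown2}, and note that the construction extends to Banach leaves because the codimension is finite. In fact the paper gives no further argument beyond the phrase ``In the smooth setting, due to Theorem~\ref{Locally}, we can prove that\ldots'', so your write-up is considerably more detailed than the paper's own treatment.
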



Thus, the immersed canonical relation associated to the equivalence relation $L_2$ is the triple
$(L_2, Hol(L_2, W(\Lambda)), \phi)$, where $\phi$ is the natural projection from the holonomy groupoid to $L_2$. In fact,  $\phi$ is a covering map over $L_2$ as is explained in \cite{Brown}, with $\phi^{-1}(x,y)= Hol(x,\gamma, y)$, that is, the holonomies of paths $\gamma$ between $x$ and $y$.\\
The next step is to adapt the argument to show that $L_1$ and $L_3$ induce immersed canonical relations. 

\subsubsection{Smoothness of $L_1$}
First of all,  we can see $L_1$ 
as a subspace of the characteristic foliation associated to $\mathcal{C}_{\Pi}$. Namely, we can think the elements of $L_1$ as the 
Lie algebroid morphisms connected to the trivial algebroid morphisms by a path along the distribution. More precisely, if we denote 
by $C \subset \mathcal{C}_{\Pi}$ the submanifold corresponding to the trivial Lie algebroid morphisms ($X$ is constant and $\eta$ is 0), then
\begin{equation}
L_1= \{ \sqcup_{\mathcal{L}\in \mathcal{F}}\mathcal{L} \vert \mathcal{L} \cap C \neq \emptyset  \}. 
\end{equation}
The characteristic foliation can be understood as the space of orbits of a gauge group $H$ acting on $\mathcal{C}_{\Pi}$, 
where $H$ corresponds to the group of local diffeomorphisms generated by the flows of the Hamiltonian vector fields associated to the 
Hamiltonian functions:
\[H_{\beta}(X,\eta)=\int_I \langle dX(u)+ \pi^{\#}(X(u))\eta(u), \beta(X(u),u) \rangle , \]
where $\beta\colon  I \to \Omega^1(M)$ and $\beta(0)=\beta(1)=0.$
This action can be written in local coordinates as follows:
  

\begin{eqnarray*}
\delta_{\beta} X^i(u) &=& -\pi^{ij}(X(u))\beta_j(X(u),u)    \\
 \delta_{\beta}\eta_i(u) &=& d_u\beta_i(X(u),u)+\partial_i \pi^{jk}(X(u))\eta_j(u)\beta_k(X(u),u). \nonumber \\
\end{eqnarray*}\label{for}

With this prescription, it is easy to check that the submanifold \[S:=\{(X,\eta)\vert X\equiv X_0,\, \eta \equiv 0  \},\] which is an $n$-dimensional submanifold of $\mathcal C_{\partial}$, where $n= \dim M$,  intersects  the foliation neatly, i.e. 
\[ T_x C \cap T \mathcal L_x = \{ 0\}, \forall x \in C\cap L_x.
\]This holds since after the prescribed gauge transformation,
the points of $C$ are trivially stabilized: the gauge transformation preserves fixed the initial and final points of the path, 
and the fact that the space $\mathcal{C}_\Pi$ is invariant under this gauge transformation implies that there is a unique point 
for each leaf and that the tangent to the orbit (which is given precisely by the gauge trnasformations) and the tangent to $S$ are independent. 
Choosing the transversal $S \subset T$ to the foliation $\mathcal F$, the restriction of the holonomy of $\mathcal F$ to $S$, induces the covering 
\[p\colon  Hol(L_2, W(\Lambda))\mid_{L_1} \to L_1,\]
with fibers the holonomy of paths along the fibers over $S$. Thus, the induced immersed canonical relation for $L_1$ is given by $(L_1, Hol(L_2, W(\Lambda))\mid_{L_1}, p)$.

\subsubsection{Smoothness of $L_3$.}
Here, we describe $L_3$ in a suitable way so we find a smooth covering for it.
The idea of the proof is to use the holonomy groupoid for an equivalence relation, understanding the space $L_3$ in terms of an equivalence homotopy relation. 
First of all, a remark:
\begin{remark}\emph{ The $s$ and $t$ fibers are saturated by the leaves of $\mathcal{F}$ restricted to $\mathcal{C}_{\Pi}$.}
\end{remark}

In other words, given the fact that the characteristic foliation can be understood as the space of orbits of gauge transformations, leaving invariant the initial 
and final points of the paths, the equivalence relation determined by $\mathcal{F}$ is finer than the one determined by $s$ or $t$.\\

In a similar way:
\begin{remark}\emph{
The fibers of the the fibered product of maps:
\[(s\times t)\colon \mathcal{C}_{\Pi} \times \mathcal{C}_{\Pi} \to M \times M \] 
are saturated by the leaves of the product foliation $\mathcal{F} \times \mathcal{F}.$
}\end{remark}
In this way, $\mathcal{F} \times \mathcal{F}$ restricts to a foliation $\mathcal{F}_{(s,t)}$ in 
\[\mathcal{C}_{\Pi}\times_{(s,t)}\mathcal{C}_{\Pi}\subset \mathcal{C}_{\Pi}\times \mathcal{C}_{\Pi}:= (s\times t)^{-1}\Delta\]
This restricted foliation has finite codimension, more precisely
\[\mbox{codim}_{\mathcal{C}_{\Pi}\times_{(s,t)}\mathcal{C}_{\Pi}}\mathcal{F}_{(s,t)}=\mbox{codim}_{\mathcal{C}_{\Pi}\times \mathcal{C}_{\Pi}}\mathcal{F}\times\mathcal{F}-\mbox{codim}_{\mathcal{C}_{\Pi}\times \mathcal{C}_{\Pi}}\mathcal{C}_{\Pi}\times_{(s,t)}\mathcal{C}_{\Pi}=2n. \]
In this way, for a triple $(a,b,c) \in L_3$, the pair $(a,b)$ is an element in $(\mathcal{C}_{\Pi}\times \mathcal{C}_{\Pi}, \mathcal{F}_{(s,t)})$. $c$ can be identified with an 
element in $\mathcal{C}_{\Pi}$ via the smooth map 
\begin{eqnarray}
\tilde{\beta}\colon  (\mathcal{C}_{\Pi}\times_{(s,t)}\mathcal{C}_{\Pi}) &\to& (\mathcal{C}_{\Pi}, \mathcal{F})\nonumber \\
(a,b) &\to& a\star b
\end{eqnarray}
where
\[ a\star b(t)= \left\{ \begin{array}{rl}
 a(\beta(2t))&, t\in [0,\frac 1 2] \\
 b(\beta(2t-1))&, t \in [\frac 1 2, 1]
       \end{array} \right.\]
and $\beta$ denotes a bump function $\beta: [0,1] \to [0,1]$.
Therefore, it is possible to characterize the space $L_3$ in the following way:
\[L_3=\{(a,b,c)\in (\mathcal{C}_{\Pi}\times_{(s,t)} \mathcal{C}_{\Pi})\times \mathcal{C}_{\Pi} \vert \mathcal{L}_{(\tilde{\beta}(a,b))}=\mathcal{L}_c\}\]
where $\mathcal{L}$ denotes (as before), the orbits of the $T^*M$-homotopy.
Hence, the induced immersed canonical relation for $L_3$ is $(L_3, \mathcal{C}_{\Pi}\times_{(s,t)} \mathcal{C}_{\Pi}, Hol(L_2, W(\Lambda))).$ 
\subsection{Lagrangian property of $L_i$}
First we prove the following 

\begin{proposition}\emph{
The tangent space $TL_2$ is a Lagrangian subspace of $T(T^*(PM)) \oplus \overline{T(T^*(PM))}.$
}
\end{proposition}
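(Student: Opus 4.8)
The plan is to work pointwise at a field $\gamma=(X,\eta)\in C_\partial$ and describe the tangent space $T_{(\gamma,\gamma')}L_2$ explicitly for $(\gamma,\gamma')\in L_2$, reducing the claim to the linear statement that a certain subspace of $T_\gamma(T^*PM)\oplus\overline{T_{\gamma'}(T^*PM)}$ is Lagrangian. Recall from Proposition~\ref{Coiso} that $C_\partial$ is coisotropic with characteristic foliation $\mathcal F$ of finite codimension $2n$, and that, as shown in the discussion preceding the statement, $L_2$ is exactly the ``same leaf'' relation on $C_\partial$, so its tangent space at $(\gamma,\gamma)$ is $\{(V,W)\in T_\gamma C_\partial\oplus T_\gamma C_\partial : V-W\in T_\gamma\mathcal F\}$, where $T_\gamma\mathcal F=(T_\gamma C_\partial)^\perp$. (For a general point $(\gamma,\gamma')$ one transports this along a leafwise path, using that the holonomy is a symplectomorphism between transversals; but since the form on $L_2$ only sees the diagonal-type description, it suffices to treat $(\gamma,\gamma)$.)

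First I would set $C:=T_\gamma C_\partial\subset V:=T_\gamma(T^*PM)$, a coisotropic subspace with $C^\perp=T_\gamma\mathcal F$ of finite codimension in $C$, and consider the subspace
\[
\Lambda:=\{(v,w)\in C\oplus C : v-w\in C^\perp\}\subset V\oplus \overline V .
\]
I would then show $\Lambda$ is coisotropic in $\overline V\oplus V$ with $\Lambda^\perp=C^\perp\oplus C^\perp$ (a short computation with the pairing $\omega(v_1,w_1)-\omega(v_2,w_2)$, using coisotropy of $C$), so that $C^\perp\oplus C^\perp\subset\Lambda\subset C\oplus C\subset \overline V\oplus V$ and $\Lambda$ lies between a coisotropic subspace and its orthogonal. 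Passing to the symplectic reduction of $\overline V\oplus V$ by $C\oplus C$, namely $\overline{\underline C}\oplus\underline C$, the image of $\Lambda$ is precisely the diagonal $\triangle_{\underline C}$, which is Lagrangian there. By Lemma~\ref{Coisotropic} applied to the coisotropic subspace $C\oplus C$ of $\overline V\oplus V$ and the intermediate subspace $\Lambda$, we conclude $\Lambda$ is Lagrangian in $\overline V\oplus V$. This is the linear heart of the matter and is genuinely routine once the inclusions $C^\perp\oplus C^\perp\subset\Lambda\subset C\oplus C$ and the reduced-space identification are in place.

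The remaining work is the identification $T_{(\gamma,\gamma)}L_2=\Lambda$, which is where the infinite-dimensional subtleties live: one must know that $L_2$ is a smooth (immersed) Banach submanifold near $(\gamma,\gamma)$ with the expected tangent space, and this is exactly what the smoothness analysis of the preceding subsection (the holonomy groupoid $\mathrm{Hol}(L_2,W(\Lambda))$ covering $L_2$) provides: the cover is a Banach manifold, the covering map is an immersion, and its differential at a point over $(\gamma,\gamma)$ has image the leafwise-difference subspace described above, because the leaves of $\mathcal F$ are the $T^*M$-homotopy classes and their tangent spaces are $C^\perp$ by Proposition~\ref{Coiso}. I would also note that $C^\perp$ having finite codimension in $C$ guarantees $\Lambda$ has finite codimension in $C\oplus C$, so all the reduction steps take place in finite-dimensional quotients and Lemma~\ref{Coisotropic} applies verbatim.

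The main obstacle is not the linear algebra but making precise that the weak symplectic form $\omega$ on $T^*PM$ still yields a bona fide Lagrangian (i.e.\ $\Lambda=\Lambda^\perp$, not merely $\Lambda\subset\Lambda^\perp$) after reduction by the possibly-non-split coisotropic $C_\partial$; here I would lean on the finite codimension of $C^\perp$ in $C$ together with the explicit Liouville-type form~\eqref{symplectic}, which makes $\omega^\sharp$ injective with closed, complemented image on the relevant finite-codimensional pieces, so that orthogonal complements behave as in the finite-dimensional case and the equality $\Lambda^\perp=C^\perp\oplus C^\perp$ — hence $\Lambda$ Lagrangian — follows as claimed.
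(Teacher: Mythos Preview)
Your proposal is correct and follows essentially the same route as the paper: identify $T_{(\gamma,\gamma)}L_2$ with the ``leafwise-difference'' subspace $\Lambda=\{(v,w)\in C\oplus C: v-w\in C^\perp\}$ (which the paper denotes $R^C$), establish the chain $C^\perp\oplus C^\perp\subset\Lambda\subset C\oplus C$, observe that the reduction is the diagonal of $\underline{C}$ and hence Lagrangian, and conclude via Lemma~\ref{Coisotropic}. You are somewhat more explicit than the paper about the identification $T_{(\gamma,\gamma)}L_2=\Lambda$ and the infinite-dimensional caveats, but the argument is the same.
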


\begin{proof}
First we prove the following 
\begin{lemma}
$TC_{\partial}^{\perp} \oplus TC_{\partial}^{\perp} \subset TL_2$.
\end{lemma}
\begin{proof}
To prove this lemma, we observe first that, according to \cite{Cat}, the leaves of the characteristic foliation of $C_{\partial}$ are precisely the orbits of the gauge equivalence relation given by $L_2$ in $C_{\partial}$. Therefore we get that 
\[TL_2= R^{C} \]
as in Equation \ref{char} and therefore we get that
\[ TC_{\partial}^{\perp} \oplus TC_{\partial}^{\perp} \subset TL_2 \subset TC_{\partial} \oplus TC_{\partial}\]
Observe now that the projection of $TL_2$ with respect to the coisotropic reduction of $C_{\partial}$ is precisely the diagonal of $C_{\partial}$ that is a Lagrangian subspace of $TC_{\partial} \oplus \overline{TC_{\partial}}$.
\end{proof}

Now, the space $TL_2$ satisfies the conditions of Proposition \ref{Coisotropic} and therefore $TL_2$ is Lagrangian, as we wanted.
\end{proof}


\begin{proposition} \label{L1Lag}\emph{The tangent space $TL_1$ is a Lagrangian subspace of  $T(T^*(PM))$.}
\end{proposition}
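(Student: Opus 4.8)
The plan is to mimic the strategy already used for $TL_2$, namely to apply Lemma~\ref{Coisotropic} to the pair consisting of the coisotropic subspace $TC_{\partial}$ and the subspace $TL_1$. First I would recall from the smoothness discussion that $L_1$ is precisely the union of those leaves of the characteristic foliation $\mathcal F$ of $C_{\partial}$ that meet the submanifold $S$ of trivial algebroid morphisms; in particular $L_1 \subset C_{\partial}$, so at each point $TL_1 \subset TC_{\partial}$. Moreover, since $L_1$ is saturated by leaves of $\mathcal F$, its tangent space contains the tangent to each leaf, i.e. $TC_{\partial}^{\perp} \subset TL_1$. Thus we are in the situation
\[
TC_{\partial}^{\perp} \subset TL_1 \subset TC_{\partial} \subset T(T^*(PM)),
\]
which is exactly the hypothesis of Lemma~\ref{Coisotropic}.

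The remaining point is to identify the reduced space $\underline{TL_1} = TL_1 / TC_{\partial}^{\perp}$ inside $\underline{TC_{\partial}} = TC_{\partial}/TC_{\partial}^{\perp}$ and to show it is Lagrangian there. Here I would use that the symplectic reduction of $C_{\partial}$ is (locally, or after passing to the holonomy groupoid) the reduced phase space $\underline{C_{\partial}}$ of $T^*M$-paths modulo homotopy, and that under this reduction $L_1$ descends to the image of the unit section $\varepsilon(M) \subset \underline{C_{\partial}}$, i.e. the constant paths with $\eta \equiv 0$. Since $\varepsilon(M)$ is the space of units of the (local) symplectic groupoid structure on $\underline{C_{\partial}}$, it is a Lagrangian submanifold; concretely one checks directly that the tangent space to $\varepsilon(M)$ at a point $x_0$ consists of pairs $(\delta X \equiv v, \delta\eta \equiv 0)$ with $v \in T_{x_0}M$, and feeding two such vectors into the symplectic form \eqref{symplectic} gives $\int_0^1 (\delta_1 X^i \,\delta_2\eta_i - \delta_2 X^i\, \delta_1\eta_i)\,dt = 0$ because all the $\delta\eta$'s vanish; a dimension count (the reduced space has dimension $2\dim M$ and $\varepsilon(M)$ has dimension $\dim M$) then shows it is maximal isotropic, hence Lagrangian.

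With $\underline{TL_1}$ Lagrangian in $\underline{TC_{\partial}}$, Lemma~\ref{Coisotropic} applies verbatim and yields that $TL_1$ is a Lagrangian subspace of $T(T^*(PM))$, which is the assertion. I expect the only genuine obstacle to be the careful identification of $\underline{TL_1}$ with the tangent to the unit section after reduction: one must make sure that the leaf-saturation statement $TC_{\partial}^{\perp}\subset TL_1$ is literally correct (this is where the description of $L_1$ via gauge orbits through $S$ and the neat-intersection property $T_xC \cap T\mathcal L_x = \{0\}$ established in the $L_1$-smoothness subsection are used) and that the reduction genuinely sends $L_1$ onto $\varepsilon(M)$ rather than something larger. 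Everything else is a direct application of Lemma~\ref{Coisotropic} together with the explicit formula \eqref{symplectic} for $\omega$, exactly parallel to the proof just given for $TL_2$.
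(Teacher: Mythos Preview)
Your proposal is correct and follows essentially the same route as the paper: both arguments establish the chain $TC_{\partial}^{\perp}\subset TL_1\subset TC_{\partial}$, identify the reduction $\underline{TL_1}$ with $T_{x_0}M$ (dimension $n=\tfrac12\dim\underline{TC_\partial}$), check isotropy via the explicit formula~\eqref{symplectic}, and then invoke Lemma~\ref{Coisotropic}. The only cosmetic difference is that the paper first verifies isotropy of $TL_1$ upstairs on general tangent vectors $(\delta X(t)+v,\delta\eta(t))$ with $(\delta X,\delta\eta)\in TC_\partial^{\perp}$ and then passes to the quotient, whereas you pass to the quotient first and check isotropy on the representatives $(\delta X\equiv v,\delta\eta\equiv 0)$; the content is the same.
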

\begin{proof}
First, we prove the following 
\begin{lemma} \emph{
$TL_1$ is an isotropic subspace \footnote{The isotropic condition is a general fact for gauge theories with boundary, see e.g. \cite{AlbertoPavel1}, but we give an explicit proof for the reader's ease. }.}
\end{lemma}
\begin{proof}
The direct computation of the tangent space $TL_1$ 
 yields 
$$T_{\gamma}L_1= (\delta X(t)+ v, \delta \eta(t)) \mid (\delta X(t), \delta \eta(t) \in TC^{\perp}, v \in T_{\gamma(0)}M).$$
Now, considering two vectors in $T_{\gamma}L_1$ denoted by $(\delta_1 X(t)+ v_1, \delta_1 \eta(t))$  and $(\delta_2 X(t)+ v_2, \delta_2 \eta(t))$ we compute in local coordinates
\begin{eqnarray*}
&&\omega((\delta_1 X^i(t)+ v^i_1, \delta_1 \eta_i(t)), (\delta_2 X^i(t)+ v^i_2, \delta_2 \eta_i(t)))\\ &=&\int_0^1(\delta_1X^i(t)+v^i_1)\delta_2\eta_i(t)-(\delta_2X^i(t)+v^i_2)\delta_1\eta_i(t))dt\\
&=&\int_0^1((\delta_1X^i(t)\delta_2\eta_i(t)-\delta_2X^i(t)\delta_1\eta_i(t))dt+\int_0^1v^i_1\delta_2 \eta_i(t)dt-\int_0^1v^i_2\delta_1 \eta_i(t)dt.
\end{eqnarray*}
The first integral vanishes since $C$ is coisotropic. The second and third integrals vanish since
\begin{equation*}
\int_0^1v^i_1\delta_2 \eta_i(t)dt=\int_0^1v^i_2\delta_1 \eta_i(t)dt= \eta_1(1)-\eta_1(0)= \eta_2(1)-\eta_2(0)=0.
\end{equation*}
\end{proof}
Now, since $TL_1$ is isotropic, after reduction we get that
\begin{eqnarray*}
 &\underline{\omega}&([(\delta_1 X^i(t)+ v^i_1, \delta_1 \eta_i(t)],[\delta_2 X^i(t)+ v^i_1, \delta_2 \eta_i(t)])\\
&=&\omega((\delta_1 X^i(t)+ v^i_1, \delta_1 \eta_i(t)), (\delta_2 X^i(t)+ v^i_2, \delta_2 \eta_i(t)))=0.
\end{eqnarray*}
Therefore $\underline{TL_1}$ is isotropic. Now, since 
\[\underline{T_{\gamma}L_1}= \{ v \in T_{\gamma_{0}}M \} \sim T_{\gamma_0}M,\]
we get that
\[\dim \underline{TL_1}= \dim T_xM= n= 1/2 \dim \underline{TC_{\partial}}.\]
This implies that $\underline{TL_1}$ is Lagrangian and then, by applying Proposition \ref{Coisotropic}, we conclude that $TL_1$ is Lagrangian, as we wanted.
\end{proof}
Now, we prove that
\begin{proposition}\emph{
The space $TL_3$ is a Lagrangian subspace of  $$T(T^*(PM)) \oplus T(T^*(PM)) \oplus \overline{T(T^*(PM))}.$$
}
\end{proposition}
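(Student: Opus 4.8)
The plan is to run the same scheme used for $TL_1$ and $TL_2$: exhibit $TL_3$ as a subspace squeezed between the characteristic data of a coisotropic and apply Lemma \ref{Coisotropic}. Work in the weak symplectic space $V:=T_x\big(T^*(PM)\big)^{\oplus 3}$ equipped with the form $\omega\oplus\omega\oplus(-\omega)$, and let $C:=(TC_\partial)^{\oplus 3}\subset V$. Since flipping the sign of the form on one factor does not change symplectic orthogonals, $C$ is coisotropic with $C^{\perp}=(TC_\partial^{\perp})^{\oplus 3}$, of finite codimension by Proposition \ref{Coi} (and the discussion of $C_\partial$ in Section \ref{smoothness}). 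From the description $L_3=\{(a,b,c)\mid \mathcal L_{a\star b}=\mathcal L_c\}\subset C_\partial\times C_\partial\times C_\partial$ (cf. \eqref{ELE3} and Section \ref{smoothness}), and the fact established there that $L_3$ is an immersed submanifold, we get immediately $TL_3\subseteq (TC_\partial)^{\oplus 3}=C$.

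Next I would prove the reverse inclusion $C^{\perp}=(TC_\partial^{\perp})^{\oplus 3}\subseteq TL_3$. The key input, already used in the proof that $TL_2$ is Lagrangian, is that the leaves of the characteristic foliation $\mathcal F$ of $C_\partial$ are exactly the orbits of the gauge group, i.e.\ the $T^*M$-homotopy classes \cite{Cat}. Hence $L_3$ is saturated by $\mathcal F\times\mathcal F\times\mathcal F$: if $a\sim a'$ then $a\star b\sim a'\star b$ (Remark preceding A.1), so $\mathcal L_{a\star b}$ is unchanged, and likewise moving $b$ inside its leaf, or $c$ inside $\mathcal L_c$, leaves the defining condition of $L_3$ intact. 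Therefore the tangent space to the leaf $\mathcal L_a\times\mathcal L_b\times\mathcal L_c$ of $\mathcal F^3$ through any $(a,b,c)\in L_3$, which is precisely $(TC_\partial^{\perp})^{\oplus 3}=C^{\perp}$, is contained in $TL_3$. Combining the two inclusions: $C^{\perp}\subseteq TL_3\subseteq C$.

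It then remains to show that the reduced subspace $\underline{TL_3}:=TL_3/C^{\perp}$ is Lagrangian in $\underline C=(\underline{TC_\partial})^{\oplus 3}$, where $\underline{TC_\partial}$ carries the reduced symplectic form and the third copy the opposite one. This descended subspace is the tangent space to the graph of the concatenation map on $T^*M$-homotopy classes, i.e.\ of the multiplication of the source-simply-connected groupoid integrating the Lie algebroid $T^*M$: in the integrable case $\underline{C_\partial}$ is the symplectic groupoid $G$ and $\underline{L_3}=\mathrm{Gr}(\mu)$, which is Lagrangian (Section \ref{groupo}); in general the same holds at the level of tangent spaces, this being exactly the statement that the reduced phase space of the PSM on a pair of pants is Lagrangian in the product of its boundary phase spaces \cite{Cat} (alternatively one applies Lemma 5.2 of \cite{Alan} to the linearized reduction). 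Once this is in place, Lemma \ref{Coisotropic} applied to $C^{\perp}\subseteq TL_3\subseteq C\subseteq V$ yields that $TL_3$ is Lagrangian in $V$, which is the assertion.

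The main obstacle is the last step: identifying $\underline{TL_3}$ with the (infinitesimal) graph of multiplication and verifying it is Lagrangian \emph{without} assuming that $\underline{C_\partial}$ is smooth. Thus the whole argument there must be carried out purely at the level of tangent spaces, parametrizing $T_{(a,b,c)}L_3$ via the fiber product $C_\partial\times_{(s,t)}C_\partial$ together with the lift of the linearized concatenation $d\tilde\beta$ (as set up in Section \ref{smoothness}), and reducing to the finite-dimensional linear-symplectic pair-of-pants computation. Everything else—the two sandwich inclusions—is a direct consequence of $L_3\subset C_\partial^{3}$ and of the saturation of $L_3$ by the characteristic foliation.
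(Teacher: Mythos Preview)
Your overall architecture coincides with the paper's: establish the sandwich $(TC_\partial^{\perp})^{\oplus 3}\subset TL_3\subset (TC_\partial)^{\oplus 3}$, show the reduction $\underline{TL_3}$ is Lagrangian, and then invoke Lemma \ref{Coisotropic}. The paper obtains the left inclusion not via saturation of $L_3$ under $\mathcal F^3$ as you do, but via a small lemma: if $\delta\gamma_1\in T_{\gamma_1}C^{\perp}$ and $\delta\gamma_2\in T_{\gamma_2}C^{\perp}$ are composable then $\delta\gamma_1*\delta\gamma_2\in T_{\gamma_1*\gamma_2}C^{\perp}$, which follows from the additivity of $\omega$ under concatenation. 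Your saturation argument and this infinitesimal concatenation lemma are two sides of the same coin.

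Where the two diverge is exactly at the step you flag as the main obstacle. The paper does not appeal to the symplectic-groupoid multiplication, to the PSM on a pair of pants, or to Lemma 5.2 of \cite{Alan}. Instead it proceeds by a direct and entirely elementary finite-dimensional computation on the reduced space: isotropy of $\underline{TL_3}$ follows immediately from the additivity of $\omega$ under concatenation (so $\underline\omega([\delta_1\gamma_1],[\delta_2\gamma_1])+\underline\omega([\delta_1\gamma_2],[\delta_2\gamma_2])-\underline\omega([\delta_1\gamma_1*\delta_1\gamma_2],[\delta_2\gamma_1*\delta_2\gamma_2])=0$), and Lagrangianity then follows from a dimension count---the matching conditions on initial, final and coinciding points cut down $6\dim M$ by $3\dim M$, leaving $\dim\underline{TL_3}=3\dim M=\tfrac12\dim(\underline{TC_\partial}^{\,\oplus 3})$. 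This is precisely the ``finite-dimensional linear-symplectic pair-of-pants computation'' you allude to at the end, and carrying it out explicitly avoids any reliance on smoothness of $\underline{C_\partial}$ or on integrability of $M$. Your references to $\mathrm{Gr}(\mu)$ and to \cite{Cat} would only strictly apply in the integrable case, so the paper's bare-hands route is the cleaner way to close the argument.
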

\begin{proof}
In order to prove this Proposition, we first prove the following
\begin{lemma}\emph{ Let $\delta \gamma_1$ and $\delta \gamma_2$ be two vectors in $TC_{\gamma_1}^{\perp}$ and $TC_{\gamma_2}^{\perp}$ that are composable. Then $\delta \gamma_1*\delta \gamma_2 \in TC_{\gamma_1*\gamma_2}^{\perp}. $
}
\end{lemma}
\begin{proof}
This follows immediately from the additive property of $\omega$ with respect to concatenation, namely, if $\delta \gamma$ is a vector in $T_{\gamma_1*\gamma_2}C$, then
\begin{eqnarray*}
\omega(\delta \gamma_1*\delta \gamma_2, \delta \gamma)= \alpha_1 \omega(\delta \gamma_1, \delta_\gamma)+ \alpha_2 \omega(\delta \gamma_2, \delta_\gamma)=0,
\end{eqnarray*}
where $\alpha_i$ are factors due to reparametrizations for $\gamma_i$.
\end{proof}
With this Lemma at hand, we can conclude, from Equation \ref{ELE3} that
\[TC^{\perp} \oplus TC^{\perp} \oplus TC^{\perp} \subset TL_3 \subset TC\oplus TC \oplus TC.\]
Now, after reduction we get that
\begin{eqnarray*}
\underline{\omega}\oplus \underline{\omega}\oplus -\underline{\omega}&\,& ([\delta_1\gamma_1] \oplus [\delta_1\gamma_2]\oplus [\delta_1\gamma_3], [\delta_2\gamma_1] \oplus [\delta_2\gamma_2]\oplus [\delta_2\gamma_3])\\
&=& \underline{\omega}([\delta_1\gamma_1, \delta_1\gamma_2])+\underline{\omega}([\delta_2\gamma_1, \delta_2\gamma_2])-\underline{\omega}([\delta_1\gamma_1\ast\delta_1\gamma_2], [\delta_2\gamma_1\ast\delta_2\gamma_2]),
\end{eqnarray*}
that is zero by the additivity property for $\omega$.
This implies that $\underline{L_3}$ is isotropic. Now, by counting dimensions, we get that
the compatibility condition for $\gamma_1$ and $\gamma_2$ give $3\dim (M)$ independent equations (for the initial, final and coinciding point of $\gamma_1, \gamma_2$ and $\gamma_3$).
Hence,
\[\dim(\underline{L_3})= 6\dim(M)-3\dim (M)= 1/2 \dim (\underline{TC}\oplus \underline{TC}\oplus \underline{TC}).\]
This implies that $\underline{TL_3}$ is Lagrangian. By Proposition \ref{Coisotropic} we conclude that $TL_3$ is Lagrangian, as we wanted.

\end{proof}
\begin{remark}\emph{
In a similar way as in $L_3$ it is possible to define $L_n, \, n\geq 3$ as the space of composable 
$n$-tuples $(\gamma_1, \gamma_2, \cdots \gamma_n)$ in $\mathcal G^n$ and by a similar argument, it can be proven that $L_n$ is Lagrangian, for all $n\geq 3$.}\end{remark}

\section{Equivalences of RRSG}
The next step is to connect the construction of the relational symplectic groupoid for $T^*PM$, which is infinite dimensional, with the s-fiber simply connected symplectic Lie groupoid integrating a Poisson manifold.  The connection is given by the following
\begin{theorem} \label{reduction}Let $(M,\Pi)$ be an integrable Poisson manifold. Let $\mathcal{G}$ be the relational symplectic groupoid associated to $T^*PM$ described above and let $G= \underline{C_{\partial}}$ be the symplectic Lie groupoid associated to the characteristic foliation on $C_{\partial}$. Then $\mathcal{G}$ and $G$ are equivalent as relational groupoids.
\end{theorem}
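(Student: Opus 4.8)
The plan is to exhibit the equivalence explicitly as the coisotropic reduction map $p\colon\mathcal G\nrightarrow G$ and to reduce the statement to Proposition \ref{pro}. First I would recall that, by the previous section, $\mathcal G=T^*PM$ together with the triple $T^*M$-homotopy relation $L$ and path reversal $I$ is a \emph{regular} relational symplectic groupoid, with $C=C_\partial$ the space of $T^*M$-paths, $L_2$ the relation of being $T^*M$-homotopic, $L_1$ the set of paths homotopic to a constant, and $M=L_1/L_2$. Since $C_\partial$ is a coisotropic Banach submanifold of finite codimension (Proposition \ref{Coiso}), Proposition \ref{projection} applies, so $p\colon\mathcal G\nrightarrow\underline{C_\partial}$ is an immersed canonical relation with $p^\dagger\circ p=(L_2)_{\mathcal G}$ and $p\circ p^\dagger=\mathrm{Id}_{\underline{C_\partial}}$. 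Under the integrability hypothesis the quotient $C/L_2=\{T^*M\text{-paths}\}/\{T^*M\text{-homotopy}\}$ is a smooth finite-dimensional manifold, so $\mathcal G$ admits a smooth symplectic reduction and Theorem \ref{TheoSym} produces a symplectic groupoid $C/L_2\rightrightarrows M$.

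The heart of the argument is then to identify this groupoid, with the relational structure transported by $p$, with $G=\underline{C_\partial}$ equipped with the relational structure of Example \ref{groupo}. Concretely I would check:
\begin{enumerate}[1.]
\item $p\circ I_{\mathcal G}=I_G\circ p$, because path reversal preserves $C_\partial$ and descends through the reduction to the groupoid inversion $\iota=\underline I$;
\item $L_G\circ(p\times p)=p\circ L_{\mathcal G}$, i.e.\ $p^3(L_3)=\mathrm{Gr}(\mu)$ with $\mu$ the multiplication induced by path concatenation on homotopy classes---this is exactly the identification $m=p^3(L_3)$ from Theorem \ref{TheoSym} combined with the additivity of concatenation;
\item $p\circ(L_1)_{\mathcal G}=\varepsilon(M)=(L_1)_G$ and $p\circ(L_1)_G=(L_1)_{\mathcal G}$.
\end{enumerate}
Together with $p^\dagger\circ p=(L_2)_{\mathcal G}$ and $p\circ p^\dagger=(L_2)_G=\triangle_G$, Remark \ref{equiv} then shows that $p$ is an equivalence; once regularity and smoothness of the reduction have been established this is in fact an immediate application of Proposition \ref{pro}.

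The main obstacle is step 2, equivalently the honest comparison of the two groupoid structures: one must verify that the leaves of the characteristic foliation of $C_\partial$ coincide with the $T^*M$-homotopy classes \emph{globally}, not merely infinitesimally (the tangent-level statement being Equation \ref{char}). This is precisely where the integrability of $T^*M$ enters---guaranteeing connectedness of the leaves and of the $L_2$-classes and smoothness of $\underline{C_\partial}$---and, granting it, the identifications $p^3(L_3)=\mathrm{Gr}(\mu)$, $\underline I=\iota$ and $p(L_1)=\varepsilon(M)$ are the same ones already established in the proof of Theorem \ref{TheoSym} and in the PSM description of $\underline{C_\partial}$, so the proof concludes.
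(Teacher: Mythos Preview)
Your proposal is correct and follows essentially the same route as the paper: the paper's proof is the one-line observation that the PSM relational groupoid is regular with smooth reduction (by integrability), so Proposition~\ref{pro} applies directly, and you have simply unpacked what that proposition asserts. Your flagged obstacle---the global identification of the characteristic leaves of $C_\partial$ with $T^*M$-homotopy classes---is handled in the paper by citing \cite{Cat} (see the proof that $TL_2$ is Lagrangian), and note that this identification holds for any Poisson manifold; integrability is only used to ensure smoothness of the quotient $\underline{C_\partial}$.
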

\begin{proof}
This is a direct consequence of Proposition \ref{pro}, since the previously described relational symplectic groupoid is regular.
\end{proof}
Another fact that results useful with the introduction of relational symplectic groupoids is the comparison of different integrations of Poisson manifolds, i.e. we do not restrict only to the case where the symplectic groupoid is $s$-fiber simply connected. The following Proposition (for more details see \cite{Mor} for the more general case of Lie algebroids) relates different symplectic groupoids integrating a given Poisson manifold $(M, \Pi)$.
\begin{proposition}\emph{
Let $G_{ssc} \rightrightarrows M$ be the s-fiber simply connected symplectic groupoid integrating $(M, \Pi)$ and let $G^{'}\rightrightarrows M$ be another s-fiber connected symplectic groupoid integrating $(M, \Pi)$. Then there exists a discrete group $H$ acting on $G_{ssc}$ such that $G= G_{ssc}/ H$ and the quotient map $p\colon  G_{ssc}\to G$ is the unique groupoid morphism that integrates the identity map 
$id: T^*M \to T^*M$.}
\end{proposition}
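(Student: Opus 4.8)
The plan is to treat both $G_{ssc}$ and $G'$ as Lie groupoids integrating the cotangent Lie algebroid $T^*M$, and to obtain $p$ from the integrability of Lie algebroid morphisms. Since $(M,\Pi)$ is integrable, $T^*M$ (with anchor $\Pi^{\#}$ and the Koszul bracket) is an integrable Lie algebroid; any symplectic groupoid over $(M,\Pi)$ has Lie algebroid canonically isomorphic to $T^*M$, and the $s$-fiber simply connected one, $G_{ssc}$, is its monodromy (Weinstein) groupoid, which in the present language is the one built from $T^*M$-paths modulo $T^*M$-homotopy, i.e. the $\underline{C_{\partial}}$ of the previous section. The identity $\mathrm{id}\colon T^*M\to T^*M$ is a morphism between $\mathrm{Lie}(G_{ssc})$ and $\mathrm{Lie}(G')$; because the source fibers of $G_{ssc}$ are simply connected, Lie's second theorem for Lie algebroids (see \cite{Mor} and the references therein) yields a unique morphism of Lie groupoids $p\colon G_{ssc}\to G'$ covering $\mathrm{id}_M$ whose differential on Lie algebroids is $\mathrm{id}_{T^*M}$. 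The uniqueness assertion of the Proposition is precisely the uniqueness in that theorem.

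Next I would show that $p$ is a surjective submersion that restricts to a covering on each source fiber, and identify its kernel with $H$. Since $dp$ is an isomorphism of Lie algebroids and $\dim G_{ssc}=\dim G'=2\dim M$, the morphism $p$ is a local diffeomorphism; being a groupoid morphism it intertwines the source maps, so each restriction $p\colon s^{-1}_{ssc}(x)\to s^{-1}_{G'}(x)$ is a local diffeomorphism between manifolds of equal dimension. As $s^{-1}_{ssc}(x)$ is simply connected and $G'$ is $s$-fiber connected, a covering-space argument --- which in this setting amounts to lifting $T^*M$-paths and using that the $T^*M$-path model surjects onto $s^{-1}_{G'}(x)$ --- shows that this restriction is a covering map, in particular onto; hence $p$ is a surjective submersion. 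Its ``kernel'' $H:=p^{-1}(M)$ then lies in the isotropy bundle, is totally disconnected and normal, and acts freely and properly on $G_{ssc}$ by left translation with orbit space $G'$, so $G'\cong G_{ssc}/H$ as Lie groupoids; strictly speaking $H$ is a bundle of discrete groups over $M$, which one may (slightly abusively) call a single discrete group when it is constant along $M$.

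Finally I would note the symplectic compatibility, which is automatic: $p^{*}\omega_{G'}$ is a closed multiplicative $2$-form on $G_{ssc}$ whose associated infinitesimally-multiplicative $2$-form on $T^*M$ is the canonical one (since $p$ differentiates to the identity), and the $s$-simply connected integration carries a unique such multiplicative symplectic form, so $p^{*}\omega_{G'}=\omega_{G_{ssc}}$. Thus $p$ is a local symplectomorphism and $H$ acts on $G_{ssc}$ by symplectomorphisms, so $G_{ssc}/H$ recovers $G'$ together with its symplectic groupoid structure. The main obstacle is the covering-space step of the second paragraph: one must make precise, in terms of the $T^*M$-path / $T^*M$-homotopy models (equivalently, via the relational symplectic groupoid $\mathcal G=T^*PM$ and its equivalence to $G_{ssc}$ established in Theorem \ref{reduction}), that source-simple-connectedness of $G_{ssc}$ forces $p$ to be surjective on source fibers and to have discrete fibers; once this is in place, the identification of $H$ and the symplectic compatibility are formal.
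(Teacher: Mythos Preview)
The paper does not actually provide a proof of this proposition; it is stated as a known fact with a pointer to \cite{Mor} ``for the more general case of Lie algebroids.'' Your sketch is essentially the standard Lie-theoretic argument one finds there: Lie's second theorem for Lie algebroids produces the unique morphism $p$ integrating $\mathrm{id}_{T^*M}$, the equality of dimensions together with $dp$ being an isomorphism of algebroids forces $p$ to be a local diffeomorphism, and the covering argument on source fibers (using that $G_{ssc}$ has simply connected $s$-fibers and $G'$ has connected ones) yields surjectivity and a totally disconnected normal kernel $H=p^{-1}(M)$. So your approach is correct and, in fact, considerably more detailed than anything the paper itself offers for this statement.

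Two remarks on how your write-up relates to the surrounding text. First, the paper \emph{does} use, in the paragraph immediately following the proposition, precisely your observation that $p$ is a local diffeomorphism (hence locally a symplectomorphism, and $p^{\dagger}$ is also a morphism) in order to derive the next proposition about equivalence of relational symplectic groupoids; so your symplectic-compatibility paragraph anticipates what the paper does next rather than reconstructing its (nonexistent) proof of the present statement. Second, your caveat that $H$ is really a bundle of discrete groups over $M$ rather than a single discrete group is well taken --- the paper's phrasing is loose on this point, and your clarification is the correct reading.
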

With this Proposition in mind, we observe that the projection map $p$, being a local diffeomorphism, is naturally compatible with the symplectic structures of $G_{ssc}$ and $G$; therefore it corresponds to a morphism of symplectic groupoids and hence it corresponds to a morphism of relational symplectic groupoids. Moreover, since locally $p^{-1}$ is also a diffeomorphism the adjoint relation $p^{\dagger}$ is also a morphism. Therefore we have the following
\begin{proposition} \label{Integrations}\emph{Let $G\rightrightarrows M$ and $G^{'}\rightrightarrows M$ be two s-fiber connected symplectic groupoid integrating the same Poisson manifold $(M, \Pi)$. Then $(G,L,I)$ and $(G^{'}, L^{'}, I^{'})$ are equivalent as relational symplectic groupoids.}
\end{proposition}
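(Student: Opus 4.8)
The plan is to reduce Proposition \ref{Integrations} to the already-established machinery of Proposition \ref{pro} (equivalence of $\mathcal{G}$ with its smooth reduction) and Proposition \ref{Integrations}'s companion result describing $p\colon G_{ssc}\to G$, using the fact that equivalence of relational symplectic groupoids is an equivalence relation (i.e., symmetric, reflexive and transitive as a relation on RSGs). So the first step I would take is to record this transitivity: if $F\colon \mathcal{G}_1\nrightarrow \mathcal{G}_2$ and $F'\colon \mathcal{G}_2 \nrightarrow \mathcal{G}_3$ are equivalences, then $F'\circ F$ is an equivalence, provided the composition is a well-defined immersed canonical relation. In our situation all the manifolds in sight are finite dimensional and the relevant relations are graphs of local diffeomorphisms or symplectic reductions, so the compositions are transverse and the ``categoroid'' partiality causes no trouble; this is the routine but necessary bookkeeping.

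Next, I would invoke the structure theorem cited just above (the discrete-group quotient description): given two $s$-fiber connected symplectic groupoids $G\rightrightarrows M$ and $G'\rightrightarrows M$ integrating $(M,\Pi)$, let $G_{ssc}\rightrightarrows M$ be the $s$-fiber simply connected one. Then there are discrete groups $H, H'$ with $G=G_{ssc}/H$, $G'=G_{ssc}/H'$, and the quotient maps $p\colon G_{ssc}\to G$, $p'\colon G_{ssc}\to G'$ are groupoid morphisms that are local diffeomorphisms compatible with the symplectic forms. Each such $p$ is therefore a symplectomorphism onto its image locally, so (as remarked in the text) both $p$ and $p^{\dagger}$ are morphisms of relational symplectic groupoids: $p$ commutes with $I$ and with $L$ because it is a groupoid morphism, and $p^{\dagger}$ does too because $p$ is a local diffeomorphism, so $p^{\dagger}\circ p = (L_2)_{G_{ssc}} = \triangle(G_{ssc})$ and $p\circ p^{\dagger}=\triangle(G)= (L_2)_G$. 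Hence $p\colon (G_{ssc},L_{ssc},I_{ssc})\to (G,L,I)$ and $p'\colon (G_{ssc},L_{ssc},I_{ssc})\to (G',L',I')$ are both equivalences of relational symplectic groupoids.

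Finally, I would assemble the conclusion: $(G,L,I)$ is equivalent to $(G_{ssc},L_{ssc},I_{ssc})$ via $p^{\dagger}$ (the transpose of an equivalence is an equivalence, by definition), and $(G_{ssc},L_{ssc},I_{ssc})$ is equivalent to $(G',L',I')$ via $p'$; composing, $p'\circ p^{\dagger}\colon G\nrightarrow G'$ is an equivalence of relational symplectic groupoids, which is exactly what is claimed. Concretely one can check that $p'\circ p^{\dagger}$ is the graph of the canonical (local-diffeomorphism) identification of $G=G_{ssc}/H$ and $G'=G_{ssc}/H'$ over $M$ induced by the identity on $T^*M$, covered appropriately.

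**The main obstacle** I anticipate is not conceptual but technical: verifying that the composition $p'\circ p^{\dagger}$ is genuinely an immersed canonical relation (rather than merely a set-theoretic composite) and that it satisfies the morphism axioms strictly on the nose --- in particular that $L_{G'}\circ\bigl((p'\circ p^{\dagger})\times(p'\circ p^{\dagger})\bigr) = (p'\circ p^{\dagger})\circ L_G$ and the analogous identity for $I$. Since $p$ and $p'$ are local diffeomorphisms and groupoid morphisms this reduces to a diagram chase with the multiplication relations $L_3 = \mathrm{Gr}(\mu)$ and the quotient maps, which is straightforward but must be done carefully because the discrete groups $H$ and $H'$ need not be related, so the identification $G\to G'$ is only defined up to the covering $G_{ssc}\to G, G_{ssc}\to G'$ --- this is precisely why one works with the \emph{relation} $p'\circ p^{\dagger}$ rather than trying to produce an honest groupoid isomorphism $G\to G'$, and it is where the flexibility of the relational framework does the work.
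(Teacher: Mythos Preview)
Your approach is exactly the paper's: the text immediately preceding the proposition observes that the covering $p\colon G_{ssc}\to G$, being a local symplectomorphism and a groupoid morphism, makes both $p$ and $p^{\dagger}$ morphisms of relational symplectic groupoids, so $p$ is an equivalence; repeating this for $G'$ and composing $p'\circ p^{\dagger}$ gives the claim.

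One slip worth flagging: your assertion $p^{\dagger}\circ p = \triangle(G_{ssc})$ is false whenever the deck group $H$ is nontrivial, since $p^{\dagger}\circ p = \{(x,y)\in G_{ssc}\times G_{ssc}: p(x)=p(y)\}$ strictly contains the diagonal. This does not damage the argument---the \emph{definition} of equivalence only requires that $F$ and $F^{\dagger}$ both be morphisms, and your justification for that (local diffeomorphism plus groupoid morphism) is the right one---but it means you should not invoke the identities of Remark~\ref{equiv} as a check here; in fact this example shows that remark, as stated, cannot hold without further hypotheses.
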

As a result of this proposition we obtain the following
\begin{corollary}\label{int} \emph{
If $M$ is an integrable Poisson manifold, then the relational symplectic groupoid on $T^*PM$ is equivalent to every s-fiber connected symplectic groupoid integrating $M$.}
\end{corollary}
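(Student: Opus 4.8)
The plan is to obtain the equivalence by composing two equivalences that are already available: the one of Theorem~\ref{reduction}, which identifies $\mathcal G=T^*PM$ with the $s$-fiber simply connected symplectic groupoid $G_{ssc}=\underline{C_\partial}$ integrating $(M,\Pi)$, and the one of Proposition~\ref{Integrations}, which identifies $G_{ssc}$ with an arbitrary $s$-fiber connected symplectic groupoid $G'$ integrating the same $(M,\Pi)$ (note that the base of $\mathcal G$ is $M=L_1/L_2$ equipped, by Theorem~\ref{Theo1Poisson1}, with the given Poisson structure $\Pi$, so all three objects have the same Poisson base). Thus the only real content is that ``being equivalent as relational symplectic groupoids'' is a transitive relation, i.e. that the composite of two equivalences is again an equivalence whenever the underlying composition of canonical relations is defined.

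First I would check that the class of morphisms of relational symplectic groupoids is closed under defined composition. If $F_1\colon\mathcal G\nrightarrow\mathcal H$ and $F_2\colon\mathcal H\nrightarrow\mathcal K$ are morphisms and $F_2\circ F_1$ is a smooth immersed submanifold, then it is again an immersed Lagrangian of $\mathcal G\times\overline{\mathcal K}$, and the two algebraic axioms pass to the composite by a one-line manipulation:
\[
(F_2\circ F_1)\circ I_{\mathcal G}=F_2\circ I_{\mathcal H}\circ F_1=I_{\mathcal K}\circ (F_2\circ F_1),
\]
\[
L_{\mathcal K}\circ\big((F_2\circ F_1)\times(F_2\circ F_1)\big)=F_2\circ L_{\mathcal H}\circ(F_1\times F_1)=(F_2\circ F_1)\circ L_{\mathcal G}.
\]
Since $(F_2\circ F_1)^\dagger=F_1^\dagger\circ F_2^\dagger$ and the transposes $F_i^\dagger$ are themselves morphisms when the $F_i$ are equivalences, the same computation shows $(F_2\circ F_1)^\dagger$ is a morphism, hence $F_2\circ F_1$ is an equivalence.

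Then I would make the two equivalences concrete so that the composition is manifestly well defined rather than merely set-theoretic. By Proposition~\ref{pro} (via Theorem~\ref{reduction}) the first equivalence is the canonical projection $\pi\colon\mathcal G\nrightarrow G_{ssc}$ coming from the coisotropic reduction of $C_\partial$; by the discussion around Proposition~\ref{Integrations} the second equivalence is the graph of the groupoid covering morphism $q\colon G_{ssc}\to G'$, which is a local diffeomorphism and in particular a surjective submersion. The composite is then
\[
q\circ\pi=\{(\gamma,q([\gamma]))\mid \gamma\in C_\partial\}\colon\mathcal G\nrightarrow G',
\]
a smooth immersed Lagrangian of $\mathcal G\times\overline{G'}$ since composing a reduction relation along a finite-codimension coisotropic with the graph of a submersion presents no transversality problem; its transpose is $\pi^\dagger\circ q^\dagger$, of the same type. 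So both $q\circ\pi$ and its transpose are morphisms, and $\mathcal G$ is equivalent to $G'$, which is the assertion of Corollary~\ref{int}.

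The step I expect to be the main obstacle is precisely the infinite-dimensional bookkeeping in the last paragraph: one must verify that $q\circ\pi$ (and $\pi^\dagger\circ q^\dagger$) really is an immersed submanifold and Lagrangian, since the general composition theorem for canonical relations fails in the weak-symplectic Banach setting. This is why the explicit description of $\pi$ as the already-controlled reduction along the finite-codimension coisotropic $C_\partial$, and of $q$ as a finite-dimensional covering map, is essential — the argument has to be run for these particular relations rather than quoted from a general principle.
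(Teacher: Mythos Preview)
Your proposal is correct and follows exactly the route the paper intends: the corollary is stated in the paper without proof, as an immediate consequence of Theorem~\ref{reduction} (equivalence of $T^*PM$ with $G_{ssc}=\underline{C_\partial}$) together with Proposition~\ref{Integrations} (equivalence of any two $s$-fiber connected symplectic groupoids integrating $(M,\Pi)$). You supply more detail than the paper does, namely the verification that the composite of two equivalences is again an equivalence and the check that in this concrete case (reduction along the finite-codimension coisotropic $C_\partial$ followed by the graph of the covering $q$) the composition is a genuine immersed Lagrangian; the paper leaves both points implicit.
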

\begin{remark}
\emph{ If $G\rightrightarrows M$ is a symplectic groupoid, regarded as a relational symplectic groupoid, by Corollary \ref{int} and Lemma \ref{Poi} we recover the well known fact, proven originally by Coste, Dazord and Weinstein (Theorem 1.1 in \cite{Cost}), that there exists a unique Poisson structure $\Pi$ on $M$ such that the source map $s$ is a Poisson map.
}
\end{remark}

It is conjectured that the equivalence of different RSG integrating a given Poisson manifolds holds in general; namely
\begin{conjecture}
Let $M$ be a (possibly non integrable) Poisson manifold. Then any two regular relational symplectic groupoids integrating it are equivalent.
\end{conjecture}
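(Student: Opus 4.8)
The plan is to show that \emph{every} regular relational symplectic groupoid (RRSG) integrating $M$ is equivalent to the canonical one $\mathcal{G}_M := T^*PM$ produced in Theorem~\ref{Relational}, and then to conclude by transitivity. For this one first checks that equivalence is an equivalence relation on the class of RRSGs: reflexivity is the identity relation, symmetry is the definition (if $F$ is an equivalence so is $F^{\dagger}$), and transitivity follows from Remark~\ref{equiv} together with the bookkeeping identities $F^{\dagger}\circ F=(L_2)_{\mathcal G}$, $F\circ F^{\dagger}=(L_2)_{\mathcal H}$, provided the relevant compositions of canonical relations are clean. Granting this, it suffices to build, for an arbitrary RRSG $(\mathcal H,L_{\mathcal H},I_{\mathcal H})$ with $L_1^{\mathcal H}/L_2^{\mathcal H}\cong M$ as Poisson manifolds (Theorem~\ref{Theo1Poisson1}), an equivalence $F\colon \mathcal G_M\nrightarrow \mathcal H$.

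The construction of $F$ is modelled on Proposition~\ref{pro} and Theorem~\ref{reduction}: in the integrable case the equivalence with an honest symplectic groupoid is the reduction projection $p\colon \mathcal G\to G$, and here one wants the analogous comparison without performing the (generally singular) reduction. Concretely, $\mathcal H$ regular carries the coisotropic $C_{\mathcal H}=L_2^{\mathcal H}\circ \mathcal H_{rel}$ whose characteristic foliation coincides, by the proposition around Equation~\eqref{char}, with the $L_2^{\mathcal H}$-classes, together with $s_{\mathcal H}\colon C_{\mathcal H}\to M$ and the Hamiltonian functions $s_{\mathcal H}^{*}f$, $f\in\mathcal C^{\infty}(M)$, supplied by axiom A.9. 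A $T^*M$-path $\gamma=(X,\eta)\in C_{\partial}=C_{\mathcal G_M}$ then determines a time-dependent Hamiltonian datum on $\mathcal H$ through $s_{\mathcal H}^{*}$ and the moment-map/gauge description of $T^*PM$, whose transport of the unit section $\varepsilon_{\mathcal H}$ produces a point of $C_{\mathcal H}$; one defines $F$ as the closure and $L_2$-saturation of the graph of this assignment. The steps to check are then: (i) $F$ is an immersed Lagrangian submanifold of $\mathcal G_M\times\bar{\mathcal H}$, where isotropy is the usual gauge-with-boundary statement (as for $L_1$ in Proposition~\ref{L1Lag}) and half-dimensionality is obtained after symplectic reduction exactly as in the Lagrangian-property arguments of the previous sections; (ii) $F\circ I_{\mathcal G_M}=I_{\mathcal H}\circ F$, which is path reversal; (iii) $L_{\mathcal H}\circ(F\times F)=F\circ L_{\mathcal G_M}$, which is compatibility with concatenation.

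It remains to show that $F^{\dagger}$ is also a morphism, equivalently that $F^{\dagger}\circ F=L_2^{\mathcal G_M}$ and $F\circ F^{\dagger}=L_2^{\mathcal H}$. This is where the real content lies: it amounts to a canonical identification, over $M$ and compatible with the symplectic/Poisson data, of the topological groupoid $C_{\mathcal H}/L_2^{\mathcal H}\rightrightarrows M$ with the Weinstein groupoid $\mathcal G(M)=\underline{C_{\partial}}=\{T^*M\text{-paths}\}/\{T^*M\text{-homotopy}\}$, so that the transport above is bijective modulo the respective characteristic foliations. I expect this comparison to be the main obstacle, for three reasons: the axioms A.7--A.9 give only topological (not smooth) control on the quotients, so the comparison has to be produced at a weak level of regularity and then upgraded; in the infinite-dimensional setting each of the compositions $F\circ F^{\dagger}$, $F^{\dagger}\circ F$, $L_{\mathcal H}\circ(F\times F)$ must be shown to be clean and to yield an immersed Lagrangian, precisely the delicate kind of point that already forced the holonomy-groupoid machinery of Section~\ref{smoothness}; and one must rule out ``exotic'' RRSGs with $L_1/L_2\cong M$ that are neither of path-space nor of symplectic-groupoid type. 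A plausible route around the last issue is to prove directly that A.7--A.9 force $(C_{\mathcal H},\mathcal F_{\mathcal H})$ to be a quotient of $(C_{\partial},\mathcal F)$ by exploiting the universal property of $T^*PM$ as the phase space of the PSM---every integration of the Lie algebroid $T^*M$ factors through $A$-paths---which would give the comparison morphism in one direction for free and reduce the Conjecture to controlling its ``kernel.''
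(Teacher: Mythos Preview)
The statement you are attempting to prove is presented in the paper as a \emph{conjecture}, not a theorem; the paper gives no proof and explicitly leaves it open. There is therefore no paper proof to compare your proposal against.

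Your proposal is a strategy outline rather than a proof, and you are candid about this: you flag the identities $F^{\dagger}\circ F=(L_2)_{\mathcal G_M}$ and $F\circ F^{\dagger}=(L_2)_{\mathcal H}$ as ``where the real content lies'' and list three obstacles. These are genuine, and in fact the difficulties appear earlier than you indicate. The very construction of $F$ is not well-posed as written: axiom A.9(3) only asserts that each $s_{\mathcal H}^{*}f$ is Hamiltonian on $C_{\mathcal H}$, not that the associated Hamiltonian vector fields are complete, nor that any time-dependent flow integrating a $T^*M$-path exists on a possibly infinite-dimensional weak symplectic $\mathcal H$; so ``transport of the unit section $\varepsilon_{\mathcal H}$'' need not land anywhere. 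Even granting that, your reduction of the equivalence property to an identification of $C_{\mathcal H}/L_2^{\mathcal H}$ with the Weinstein groupoid $\underline{C_{\partial}}$ is essentially a restatement of the conjecture: in the non-integrable case $\underline{C_{\partial}}$ is only a topological groupoid, and nothing in A.7--A.9 forces an arbitrary regular $\mathcal H$ to have the same quotient---ruling out the ``exotic'' RRSGs you mention is exactly what is open. Finally, transitivity of equivalence is not free in $\mbox{\textbf{Symp}}^{Ext}$, since compositions of immersed canonical relations can fail to be immersed or Lagrangian; your parenthetical ``provided the relevant compositions are clean'' conceals a hypothesis that would itself require proof for each pair of RRSGs under comparison.
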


It is also conjectured that there is a relationship between the categoroid of Poisson manifolds with coisotropic relations as morphisms and the the categoroid of \textbf{RRSGpd}. More precisely,
\begin{conjecture}
There is an equivalence of categoroids between $\mbox{\textbf{Poiss}}^{Ext}$, the categoroid of Poisson manifolds as objects and immersed coisotropic submanifolds (coisotropic relations), as morphisms, and the categoroid \textbf{RRSGpd} of regular relational symplectic groupoids.
\end{conjecture}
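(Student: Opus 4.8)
The plan is to construct two functors of categoroids, $\Psi\colon \mbox{\textbf{Poiss}}^{Ext}\to\textbf{RRSGpd}$ and $\Phi\colon\textbf{RRSGpd}\to\mbox{\textbf{Poiss}}^{Ext}$, and to show that they are mutually quasi-inverse in the weak sense appropriate to categoroids, namely that $\Phi\circ\Psi$ and $\Psi\circ\Phi$ are naturally \emph{equivalent} to the respective identity functors, naturality being witnessed by equivalences of relational symplectic groupoids (resp.\ by Poisson diffeomorphisms). On objects, $\Psi$ is the path-space integration of Theorem~\ref{Relational}: $M\mapsto(T^*PM,L,I)$. On a morphism, i.e.\ an immersed coisotropic submanifold $C$ of $\bar M_1\times M_2$, $\Psi$ should produce the boundary Poisson sigma model relation $\Psi(C)\colon T^*PM_1\nrightarrow T^*PM_2$: the set of pairs $(\gamma_1,\gamma_2)$ of $T^*M_i$-paths that extend to a solution of the PSM on a strip $I\times[0,\varepsilon]$ whose far boundary component is a Lie algebroid morphism into $C$ (a \emph{$C$-homotopy}). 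On objects, $\Phi$ is the base construction $\mathcal G\mapsto M_{\mathcal G}:=L_1/L_2$ equipped with the Poisson structure of Theorem~\ref{Theo1Poisson1}; on a morphism $F\colon\mathcal G\nrightarrow\mathcal H$ it is the induced base relation $F_M$.

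First I would verify that $\Phi$ and $\Psi$ are well defined. For $\Phi$ one must check that $F_M$ is an immersed coisotropic submanifold of $\bar M_{\mathcal G}\times M_{\mathcal H}$ whenever $F$ is a morphism of relational symplectic groupoids: this follows by pushing the Lagrangian $F\subset C_{\mathcal G}\times\bar C_{\mathcal H}$ through the symplectic reduction of $C_{\mathcal G}\times\bar C_{\mathcal H}$ and invoking the reduction mechanism of Lemma~\ref{Coisotropic}, using that $F$ intertwines the $L_i$ and hence descends compatibly with the surjective submersions $s$. For $\Psi$ one must check that $\Psi(C)$ is an immersed canonical relation intertwining $L$ and $I$: smoothness is obtained by the holonomy-groupoid technique of Section~\ref{smoothness}, now applied to the product characteristic foliation restricted over $C$, and the Lagrangian property by the dimension-counting and reduction argument of the corresponding subsection. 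Compatibility with the partially defined compositions is then a gluing statement — concatenation of strips for $\Psi$, and the commutation of symplectic reduction with composition of Lagrangians for $\Phi$.

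Next I would establish the two natural equivalences. The equivalence $\Phi\circ\Psi\cong\Id_{\mbox{\textbf{Poiss}}^{Ext}}$ is the easy one: on objects $L_1/L_2=M$ holds by the defining property of the path-space construction in Theorem~\ref{Relational}, and on morphisms the base relation induced by $\Psi(C)$ collapses, under the $s$- and $t$-fiberwise quotient by the characteristic foliation, exactly to $C$, because a $T^*M_i$-path descends to its Lie algebroid homotopy class and the strip-solution space over $C$ descends to $C$ itself. The equivalence $\Psi\circ\Phi\cong\Id_{\textbf{RRSGpd}}$ requires, for every regular relational symplectic groupoid $\mathcal G$, a natural equivalence $\mathcal G\simeq T^*P(M_{\mathcal G})$. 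When $M_{\mathcal G}$ is integrable this is provided by Corollary~\ref{int} together with Proposition~\ref{pro}; in general it is exactly the content of the preceding conjecture, so this is where the substance of the proof lies.

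The hard part will therefore be the construction, for an arbitrary regular relational symplectic groupoid $\mathcal G$ with possibly nonintegrable base, of a canonical equivalence relating $\mathcal G$ to $T^*P(M_{\mathcal G})$, together with its naturality in $\mathcal G$. The natural candidate is a \emph{$\mathcal G$-path} construction internal to $\mathcal G$: a relation $F\colon T^*P(M_{\mathcal G})\nrightarrow\mathcal G$ whose points over a $T^*M$-path $\gamma$ are paths inside $C_{\mathcal G}$ lying over $\gamma$, taken modulo $L_2$; one must then prove that $F$ is an immersed Lagrangian submanifold of $T^*P(M_{\mathcal G})\times\overline{\mathcal G}$, that it intertwines $L$ and $I$ on both sides, and that $F^\dagger$ is again a morphism, i.e.\ $F^\dagger\circ F=(L_2)_{T^*P(M_{\mathcal G})}$ and $F\circ F^\dagger=(L_2)_{\mathcal G}$. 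Controlling the Lagrangian and smoothness conditions for this relation in the infinite-dimensional setting, and checking the naturality square against arbitrary morphisms of relational symplectic groupoids, are precisely the points that keep this statement (and the preceding one) at the level of conjecture; I expect a proof to proceed by first settling the uniqueness-of-integration conjecture and then bootstrapping the functoriality and naturality from the explicit path-space model.
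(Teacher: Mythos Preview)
The statement you are attempting to prove is a \emph{Conjecture} in the paper, not a theorem: the paper offers no proof. After stating it, the authors merely name the two candidate functors --- the integration functor $Int\colon\mbox{\textbf{Poiss}}^{Ext}\to\textbf{RRSGpd}$ given by the PSM path-space construction, and the base functor $P\colon\textbf{RRSGpd}\to\mbox{\textbf{Poiss}}^{Ext}$ from Theorem~\ref{Theo1Poisson1} --- and remark that the construction of \cite{Coiso} ``should in principle lead to this functoriality condition.'' That is the entire content of the paper on this point.

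Your outline is therefore not to be compared against a proof but against a one-sentence heuristic, and at that level your $\Psi$ and $\Phi$ coincide with the paper's $Int$ and $P$. You go considerably further than the paper by proposing a specific construction of $\Psi$ on morphisms (PSM strip solutions with coisotropic boundary condition), of $\Phi$ on morphisms (the induced base relation $F_M$), and of the natural equivalence $\Psi\circ\Phi\Rightarrow\Id$ via an internal $\mathcal G$-path relation. You also correctly isolate the genuine obstruction: the equivalence $\Psi\circ\Phi\cong\Id_{\textbf{RRSGpd}}$ requires, for every regular relational symplectic groupoid with possibly nonintegrable base, a canonical equivalence to $T^*P(M_{\mathcal G})$, which is exactly the content of the \emph{preceding} conjecture in the paper --- also unproven. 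Your closing paragraph acknowledges this. So your document is not a proof but an honest and reasonable research plan; just be aware that the paper itself does not claim to have one either.
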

In this case, the functor $Int:\mbox{\textbf{Poiss}}^{Ext} \to \textbf{RRSGpd}$ would be given by the previously constructed relational 
symplectic groupoid through the PSM, and the adjoint functor $P: \textbf{RRSGpd}\to \mbox{\textbf{Poiss}}^{Ext}$ 
is given by the construction of the Poisson structure of the base of a given RRSG, described in Theorem \ref{Theo1Poisson1}. The construction given in \cite{Coiso} should in principle lead to this functoriality condition.


\end{document}